\documentclass[11pt]{amsart}

\usepackage{amsmath,amssymb,amsthm,mathtools,bm,mathrsfs,graphicx}
\usepackage{bbm}
\usepackage[mathscr]{eucal} 

\usepackage[T1]{fontenc}
\usepackage[a4paper,margin=1in]{geometry}
\usepackage{indentfirst}
\usepackage{esint}
\usepackage{graphicx}
\usepackage{float}

\usepackage[colorlinks=true,linkcolor=blue,citecolor=blue,urlcolor=blue]{hyperref}
\newcommand{\dashint}{\fint}

\theoremstyle{plain}
\newtheorem{theorem}{Theorem}[section]
\newtheorem{lemma}[theorem]{Lemma}
\newtheorem{proposition}[theorem]{Proposition}

\theoremstyle{remark}
\newtheorem{remark}{Remark}[section]

\newcommand{\cB}{\mathcal{B}}  
\newcommand{\cC}{\mathcal{C}}  
\newcommand{\cA}{\mathcal{A}}
\newcommand{\cQ}{\mathcal{Q}}
\newcommand{\cS}{\mathcal{S}}
\newcommand{\cH}{\mathcal{H}}
\newcommand{\cM}{\mathcal{M}}
\newcommand{\cP}{\mathcal{P}}

\newcommand{\lan}{\langle}
\newcommand{\ran}{\rangle}

\newcommand{\intsigma}{{\mathring{\Sigma}}}
\newcommand{\oH}{\mathring{\mathrm{H}}^1}
\newcommand{\rb}{\mathrm{b}}

\allowdisplaybreaks
\newcommand{\bga}{\bm{\gamma}}
\newcommand{\pa}{\partial}
\newcommand{\dd}{\,\mathbf{d}}

\newcommand{\Si}{\Sigma}

\newcommand{\B}{\mathbb{B}}
\newcommand{\R}{\mathbb{R}}
\newcommand{\N}{\mathbb{N}}

\newcommand{\id}{\mathrm{id}}

\begin{document}
	
	\title[On Solutions for Singular Toda System on Surfaces with Boundary]
	{On Solutions for Singular Toda System \\ on Surfaces with Boundary}

	\author{Zhengni Hu}
	\address[Zhengni Hu]{School of Mathematical Sciences, Shanghai Jiao Tong University
		800 Dongchuan RD, Minhang District, 200240 Shanghai, China}
	\email{zhengni\_hu@sjtu.edu.cn}

	\author{Aleks Jevnikar}
	\address[Aleks Jevnikar]{Department of Mathematics, Computer Science and Physics,
		University of Udine, Via delle Scienze 206, 33100 Udine, Italy}
	\email{aleks.jevnikar@uniud.it}
	
	\author{Sang-Hyuck Moon}
	\address[Sang-Hyuck Moon] {Department of Mathematics and Institute of Mathematical Sciences, Pusan National University, Busan 46241, Republic of Korea}
	\email{shmoon@pusan.ac.kr}
	
	\date{}


	\begin{abstract}
		This paper studies a singular Toda system with linear source terms on a compact surface $ \Sigma $ with  boundary $ \partial\Sigma$. The aim is twofold. First, using isothermal coordinates and reflection arguments near the boundary, we perform blow-up analysis obtaining a compactness result. Second, by assuming that $\partial\Sigma$ has at least two connected components, we prove a general existence result via a min-max scheme combined with a retraction argument. 
	\end{abstract}

	\keywords{Singular Toda systems, Neumann problems, blow-up analysis,
			variational methods, min--max methods}
	\subjclass[2020]{35J50, 35J61, 35R01}
		
		\maketitle

	\section{Introduction}
	In this paper, we consider the following singular $SU(3)$  Toda system on a compact surface $(\Sigma,g)$ with smooth boundary $\partial\Sigma$ and unit volume $|\Sigma|=1$: 
	\begin{equation}\label{eq:toda_0}
		\begin{aligned}
			\begin{cases}
				-\Delta_g v_1 + \beta v_1 =\sum^2\limits_{j=1} a_{1j} \rho_j  \left( \frac{h_j e^{v_j}}{\int_{\Sigma} h_j e^{v_j} \, dv_g} - 1\right) - \sum^{\iota}\limits_{l=1} \varrho(p_l)\gamma_{1l}\left(\delta_{p_l}-1\right) &\text{in } \intsigma,\\
				-\Delta_g v_2 + \beta v_2 =\sum^2\limits_{j=1} a_{2j} \rho_j  \left( \frac{h_j e^{v_j}}{\int_{\Sigma}h_j e^{v_j}  dv_g} - 1\right) - \sum^{\iota}\limits_{l=1} \varrho(p_l)\gamma_{2l}\left(\delta_{p_l}-1\right)  &\text{in } \intsigma,\\
				\partial_{\nu_g} v_1 = \partial_{\nu_g} v_2 = 0\hfill &\text{on }  \partial \Sigma.
			\end{cases}
		\end{aligned}
	\end{equation}
	Here, $\intsigma$ is the interior of $\Sigma$,  $\Delta_g$ is the Laplace-Beltrami operator, $dv_g$ is the area element of $(\Sigma,g)$,  $\nu_g$ is the unit outward normal to the boundary $\partial \Sigma$,  $\delta_p$ is the Dirac measure centered at $p\in \Sigma$, $(a_{ij})=\left(\begin{matrix}
		2 &-1\\
		-1 &2
	\end{matrix}\right)$ is the Cartan matrix,  $\rho_1, \rho_2>0$ are parameters,  $\beta\in \R$ is the coefficient of the linear source terms,   $h_1, h_2$ are smooth positive potential functions, {$\mathcal{Q}:=\{p_l\}_{l=1}^\iota\subset \Sigma$ is the singular point set}, $\gamma_{il}>-1$ is the weight of the singular point for $i=1,2 \text{ and }l=1,\cdots,\iota$, and $\varrho(x)=
	4\pi $ for $x\in \intsigma, $ $ \varrho(x)=
	2\pi$ for $ x\in \partial \Sigma$. 
	
	Assume that $-\beta$ is not an eigenvalue   of $-\Delta_g$ restricted to the zero-mean Neumann space. For each $p\in\Sigma$, let $G_p^g:=G^g(\cdot,p)$ be the Green function satisfying
	\begin{equation}\label{Green-F-G}
		\begin{aligned}
			\begin{cases}
				-\Delta_g  G^g(\cdot,p)+\beta G^g(\cdot,p)= \delta_p -1 & \text{ in } \intsigma,\\
				\partial_{\nu_g} G^g(\cdot, p)= 0 & \text{ on } \partial\Sigma,\\
				\int_{\Sigma} G^g(\cdot,p) =0. &
			\end{cases}
		\end{aligned}
	\end{equation}
	The existence and estimates for the Neumann Green function in the case $\beta=0$, including boundary poles, are established in \cite[Lemma~6]{yang_wang2021}; see also \cite{ABH2026} for  a related construction for $-\Delta_g+\beta$. 
	The properties of $G_p^g$ needed in the present setting are collected in Lemmas \ref{lem:Green-properties} and \ref{cor:Green-potential}.
	\par

	Define  
	\begin{equation*}
		u_i= v_i + \sum_{l=1}^\iota  \varrho(p_l) \gamma_{il}G^g(\cdot,p_l),\quad V_i= h_i e^{- \sum_{l=1}^\iota  \varrho(p_l)\gamma_{il}G^g(\cdot,p_l)}.
	\end{equation*}
	Under this change of variables, the Toda system~\eqref{eq:toda_0} is transformed into the following system: 
	\begin{equation}\label{eq:toda}
		\begin{aligned}
			\begin{cases}
				-\Delta_g u_1 + \beta u_1=\sum\limits_{j=1}^2 a_{1j} \rho_j  \left( \frac{V_j e^{u_j}}{\int_{\Sigma} V_j e^{u_j} \, dv_g} - 1\right)  & \text{ in } \, \intsigma,\\
				-\Delta_g u_2 + \beta u_2 =\sum\limits_{j=1}^2 a_{2j} \rho_j  \left( \frac{V_j e^{u_j}}{\int_{\Sigma} V_j e^{u_j} \, dv_g} - 1\right)  &  \text{ in } \, \intsigma,\\
				\partial_{\nu_g} u_1 = \partial_{\nu_g} u_2 = 0   \,
				&\text{ on }\, \partial \Sigma,
			\end{cases}
		\end{aligned}
	\end{equation}
	where $V_j(x)>0$ {for any $x\in \Sigma\setminus\mathcal{Q}$}, and $V_j(x)\simeq d^{2\gamma_{il}}_g(x,p_l)$ {as $x\to\,p_l$} for any $j=1,2;\, l=1,\cdots,\iota$. 
	\par

	Define a subspace of $H^1(\Sigma)$ with zero average, $$\oH:=\left\{ v\in H^1(\Sigma): \int_{\Sigma} v dv_g =0\right\},$$
	equipped with the inner product $\lan u,v\ran_{\oH}=\int_{\Sigma} \lan\nabla u,\nabla v\ran_g dv_g$ for any $u,v\in\oH$. We will study the singular Toda system \eqref{eq:toda} in
	$\cH:= \oH\times \oH,$ a subspace of $H^1(\Sigma) \times H^1(\Sigma)$ with  $ \int_{\Sigma} u_i \, dv_g = 0 $ for  $i = 1, 2$. 
	
	The system \eqref{eq:toda} is variational and has a corresponding Euler-Lagrange functional  for $(u_1, u_2)\in \cH$:
	\begin{equation*}
		\begin{split}
			J_{\beta,\rho}(u_1,u_2)=\int_{\Sigma} Q(u_1,u_2)dv_g +\beta\int_{\Sigma} Q_0(u_1,u_2) dv_g- \sum_{i=1}^2 \rho_i\log \int_{\Sigma} V_i e^{u_i} dv_g,
		\end{split}
	\end{equation*}
	where $\rho=(\rho_1,\rho_2), $ the quadratic forms $$ Q(u_1,u_2)=\frac 1 3 \left( |\nabla u_1|_g^2+ |\nabla u_2|_g^2 + \langle \nabla u_1, \nabla u_2 \rangle_g \right), $$ 
	and   $Q_0(u_1,u_2)= \frac 13 (u_1^2+u_2^2+u_1u_2)$
	for  $u=(u_1,u_2) \in \cH.$
	
	The linear term $\beta u_i$ is motivated, in particular, by its scalar counterpart. For mean field equations, a linear term of this type arises naturally in the study of stationary Keller–Segel systems; see, for instance, \cite{Battaglia2018,wang_wei2002_steady}. This motivates us to consider the corresponding linear perturbation of the $SU(3)$ Toda system.
	
	The system \eqref{eq:toda} with $\beta = 0$ and positive potentials $V_i > 0$ for $i = 1, 2$ corresponds to the classical Toda system. This system arises in various contexts in geometry and physics, including the study of holomorphic curves in $\mathbb{C}\mathbb{P}^2$ \cite{bolton1988conformal,doliwa1997holomorphic, lin2012classification}, and in non-abelian Chern-Simons vortex theory \cite{dunne1991self, dunne1995self, nolasco1999double, nolasco2000vortex, yang1999relativistic, yang2001solitons}. 
	
	The existence of critical points of $J_{0,\rho}$ with positive potentials has been extensively studied on closed surfaces and bounded domain in $\R^2$ with Dirichlet conditions. 
	By the classical Moser-Trudinger inequality,
	it is well known that  $J_{0,\rho}$ is bounded from below if and only if $\rho_i \leq 4\pi$ for each $i = 1, 2$; see~\cite{jostandwang2001}. It follows that $\rho_i < 4\pi$ for both $i=1,2$, the functional $J_{0,\rho}$ becomes coercive and thus admits a minimizer via the standard variational method.
	
	However, when $\rho_i$ exceeds $4\pi$, $J_{0,\rho}$ is no longer bounded from below, rendering the standard variational method inapplicable. By some geometric conditions, solutions for parameters at the coercive threshold can be obtained.
	Jost, Lin and Wang demonstrated in \cite{jost_lin_wang2006} that $J_{0,\rho}$ has a minimizer when $(\rho_1,\rho_2)$ in $\{ 4\pi\}\times (0, 4\pi)$ or $ (0, 4\pi)\times \{ 4\pi\}$ for closed surfaces. Independently, Li explores the scenario where $(\rho_1, \rho_2) = (4\pi, 4\pi)$  in \cite{li2005solutions}.  
	Subsequently, by applying a min-max scheme  with a refined Moser-Trudinger inequality, significant progress has been made in  the existence of solutions to the classical $SU(3)$ Toda system.
	In general genus settings, Malchiodi and Ndiaye demonstrate in \cite{Malchiodi2007SomeER} that the Toda system on closed surfaces has solutions for $(\rho_1, \rho_2)$ in $ (0, 4\pi) \times(4\pi m, 4\pi(m + 1))$ or  $(4\pi m, 4\pi(m + 1))\times (0, 4\pi)$ with non-negative integer $m$, by a min-max scheme. 
	Jevnikar, Kallel, and Malchiodi further consider cases in which one of the $\rho_i$ values lies in the interval $(4\pi, 8\pi)$ in~\cite{jevnikar_kallel_Malchiodi2015}.
	When the genus of surface $\Sigma$ is positive, solutions for the Toda system on closed surfaces (including the singular Toda system with non-negative singular weights) for $\rho_i \notin 4\pi\mathbb{N}$ for $i=1,2$, have been addressed in \cite{battaglia_jevnikar_ruiz2015} by a min-max scheme. 
	Despite the above results, the existence of solutions for the Toda system with general $\rho=(\rho_1,\rho_2)$ values remains an open question, even for closed surfaces. For related work on singular Toda systems on closed surfaces with negative singular weights, see \cite{battaglia_existence_2015}.
	
	When $\Sigma$ is a surface with boundary, from \cite{yang2006extremal} the Moser-Trudinger inequality turns to be 
	\begin{equation}
		\label{eq:MT-b} \log \int_{\Sigma} e^u dv_g \leq 
		\frac 1 {8\pi} \int_{\Sigma} |\nabla u|_g^2 dv_g  +C, 
	\end{equation}
	for all $u\in \oH$ and some constant $C>0.$
	The functional $J_{0,\rho}$ is coercive when $\rho_1, \rho_2 \in (0, 2\pi)$, which implies the existence of a minimizer via standard variational methods. In this setting, the threshold is $2\pi$ instead of $4\pi$ because blow-up points may occur on the boundary. In the critical case, Zhu \cite{zhu_solutions_2011} studied the existence of a minimizer for $J_{0,\rho}$ with $(\rho_1,\rho_2) = (2\pi, 2\pi),$ under certain geometric assumptions.

	To obtain the existence, it is necessary to establish the global compactness, which is of independent interest, as it is a necessary tool for obtaining solutions via variational methods, see e.g. \cite{battaglia_jevnikar_ruiz2015,Malchiodi2007SomeER,jevnikar_kallel_Malchiodi2015} and also plays an important role in the degree counting for the Toda system, see e.g. \cite{lee_degree_2018,Lee2020,GuZhang2020}.
	
	Assuming that $u^n=(u_1^n, u_2^n)$ is a sequence solutions of~\eqref{eq:toda} for $\rho^n\rightarrow \rho$, define the local limit masses as 
	\begin{equation}\label{def:masses}
		\sigma_i(x)= \lim_{{r\rightarrow 0}} \lim_{{n\rightarrow +\infty}}\int_{U_r(x)} \frac{ \rho^n_i  V_i e^{u^n_i}}{\int_{\Sigma}  V_i e^{u^n_i} dv_g}dv_g, \text{ for } i=1,2,
	\end{equation}
	and the blow-up set for $u^n_i$ as 
	\[ \mathcal{S}_i:=\left\{x\in \Sigma: \exists x_n\rightarrow x \text{ such that } \Big(u^n_i(x_n)- \log \int_\Sigma V_i  e^{u^n_i} dv_g\Big) \rightarrow +\infty \right\}. \]
	Here, $U_{r}(x):= \{ x'\in \Sigma: d_g(x,x')< r\}$ denotes geodesic balls.
	
	Let $\mathcal{S}:=\mathcal{S}_1\cup\mathcal{S}_2$, which is called the blow-up set of the Toda system~\eqref{eq:toda} for $\{ u^n\}_n$. For any $i=1,2$, let
	\begin{equation*}
		\begin{aligned}
			\gamma_i(x) = 
			\begin{cases}
				0, &\text{if } x \notin\mathcal{Q},\\
				\gamma_{ij}, &\text{if } x = p_j,\quad j=1,\cdots,\iota.
			\end{cases}
		\end{aligned}
	\end{equation*}
	\par 
	
	Let 
	\begin{equation}\label{eq:rela_local_mass}
		\sigma_1^2+\sigma_2^2-\sigma_1\sigma_2=\varrho\cdot(1+\alpha_1)\sigma_1+ \varrho\cdot(1+\alpha_2)\sigma_2.
	\end{equation}
	By a Pohozaev type identity, we have the following characterization of  the local limit masses at blow-up points (see Lemma \ref{lemma:identity}): for 
	$x\in\cS$,  $(\sigma_1(x),\sigma_2(x))$ is in
	\[  \{ (\sigma_1,\sigma_2):\sigma_1,\sigma_2 \ge 0 \text{ s.t. } \eqref{eq:rela_local_mass} \text{ holds for }\varrho=\varrho(x), \alpha_1=\gamma_1(x), \alpha_2=\gamma_2(x)\}.\]
	For $x\in \cS\setminus\cQ$, by the quantization of the local limit masses, $(\sigma_1(x), \sigma_2(x))$ is one of the following combinations: 
	\begin{eqnarray*}
		\big(0,\varrho(x)\big), \big(\varrho(x),
		0\big),  \big( \varrho(x),2\varrho(x)\big), 
		\big(2\varrho(x), \varrho(x)\big),\big(2\varrho(x),2\varrho(x)\big).
	\end{eqnarray*}
	For the case $\beta=0$ with empty boundary, the statement above is  given by \cite{jost_lin_wang2006}. Furthermore,  \cite{LinWeiZhang2015} proves that $(\sigma_1, \sigma_2)$ can only take a finite number of values for the Dirichlet problem. We will extend these results to our setting using an extension technique (see Section \ref{sec:2}). When the blow-up point lies on the boundary, the corresponding masses are half of those in the interior case. 
	
	Following the formulation in \cite{battaglia_jevnikar_ruiz2015}, for  any {$\left(\alpha_1, \alpha_2\right)\in (-1,+\infty)\times (-1,+\infty), x\in \Sigma$,} we define  
	\begin{equation*}
		\begin{split}
			\Gamma_{\alpha_1, \alpha_2}(x) :=\Big\{ (\sigma_1,\sigma_2): {\sigma_1,\sigma_2 \ge 0} \text{ s.t. } \eqref{eq:rela_local_mass} \text{ holds  for } \varrho=\varrho(x)  \Big\}	
		\end{split}
	\end{equation*}
	which is the subset of an ellipse in $\mathbb{R}^2$. 
	
	Then, we define 
	$\Lambda_{\alpha_1, \alpha_2}(x) \subseteq \Gamma_{\alpha_1, \alpha_2}(x)$,  constructed by the following rules:
	\begin{itemize}
		\item[1)]  $(0,0),$ $\big(\varrho(x)(1+\alpha_1), 0\big),$ $\big(0,\varrho(x)(1+\alpha_2)\big),$ $\big(\varrho(x)(1+\alpha_1), \varrho(x)(2+\alpha_1+\alpha_2)\big),$ $\big(\varrho(x)(2+\alpha_1+\alpha_2), \varrho(x) (1+\alpha_2)\big)$, and  $\big(\varrho(x)(2+\alpha_1+\alpha_2), \varrho(x)(2+\alpha_1+\alpha_2)\big)$ belong to $\Lambda_{\alpha_1, \alpha_2}(x)$;
		\item[2)] if $(a, b) \in \Lambda_{\alpha_1, \alpha_2}(x)$ then also any $(c, d) \in \Gamma_{\alpha_1, \alpha_2}(x)$ with $c=a+\varrho(x)m, m \in \N_0, d \geq b$ belongs to $\Lambda_{\alpha_1, \alpha_2}(x)$ where $\N_0$ is the set of non-negative integers;
		\item[3)]if $(a, b) \in \Lambda_{\alpha_1, \alpha_2}(x)$ then also any $(c, d) \in \Gamma_{\alpha_1, \alpha_2}(x)$ with $d=b+\varrho(x) n, n \in \N_0, c \geq a$ belongs to $\Lambda_{\alpha_1, \alpha_2}(x)$.
	\end{itemize}
	The local blow-up masses $(\sigma_1(x), \sigma_2(x))$ must lie in $\Lambda_{\alpha_1, \alpha_2}(x)$ for any $x \in \mathcal{S}$, see Section \ref{sec:2} and also \cite{LinWeiZhang2015}. Conversely, whether $(\sigma_1(x), \sigma_2(x))$ can attain all points in $\Lambda_{\alpha_1, \alpha_2}(x)$ remains an open question and  it relates to the existence of  blow-up solutions for the singular Toda system. For results on the classical Toda system, see \cite{musso_new_2016, DAprile_Pistoia_Ruiz2015,D'aprile_asymmetric_2016} and the references therein.
	For $i=1,2$, let
	{
		\begin{eqnarray*}
			\Gamma_i=\Big\{2\pi ( 2k_i+k'_i)+\sum_{x\in \cQ}\sigma_i(x) :  k_i,k'_i\in \N_0,  
			\sigma_i(x)\in \pi_i\left(\Lambda_{\gamma_1(x),\gamma_2(x)}(x)\right) \text{ for every } x\in \cQ\Big\},
		\end{eqnarray*}
	}
	where $\pi_i$ is the projection on the $i$-th component.  Then, we set
	$$
	\Gamma:=\left(\Gamma_1 \times [0, +\infty)\right) \cup\left([0,+\infty) \times \Gamma_2\right). $$
	
	\medskip
	
	\begin{theorem}~\label{thm:cpt_sols}
		Let  $\cA$ be a compact subset of $\mathbb{R}_{>0}^2\setminus \Gamma$, and assume that  $-\beta$ is not an eigenvalue   of $-\Delta_g$ restricted to the zero-mean Neumann space.  
		Then the solution space of \eqref{eq:toda} with $\rho=(\rho_1,\rho_2)\in \cA$ is compact in $\cH$. 
	\end{theorem}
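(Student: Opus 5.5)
We argue by contradiction. Suppose $u^n=(u_1^n,u_2^n)\in\cH$ solves \eqref{eq:toda} with $\rho^n\to\rho\in\cA$, and $\{u^n\}$ is unbounded in $\cH$. The plan has four steps: reduce compactness to the absence of blow-up; show that blow-up forces a mass concentrated at finitely many points; quantize the masses via Section~\ref{sec:2}; and check that the resulting totals land in $\Gamma$.

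\textbf{Step 1: reduction to the blow-up set.} It suffices to show $\cS=\emptyset$. Normalize $w_i^n:=u_i^n-\log\int_\Sigma V_ie^{u_i^n}dv_g$. If $\cS=\emptyset$, then $w_i^n$ is bounded above uniformly on $\Sigma$. The right-hand sides of \eqref{eq:toda} are then bounded in $L^p$ for some $p>1$; this holds even near $\cQ$, because $V_i\simeq d_g^{2\gamma_{il}}$ with $\gamma_{il}>-1$. Since $-\beta$ is not a Neumann eigenvalue on zero-mean functions, the operator $-\Delta_g+\beta$ is invertible on $\oH$. Elliptic estimates with the Green function of Lemmas~\ref{lem:Green-properties} and~\ref{cor:Green-potential} then bound $u^n$ in $W^{1,q}$ and $L^\infty$. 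A standard bootstrap gives convergence in $\cH$, which is the contradiction.

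\textbf{Step 2: structure of the limit when $\cS\neq\emptyset$.} Suppose $\cS\ne\emptyset$. The density bounds (total mass $\rho_i^n$) and a Brezis--Merle type alternative show that $\cS$ is finite. Moreover, $\sigma_i(x)>0$ for some $i$ at each $x\in\cS$. To treat boundary points, we use isothermal coordinates flattening $\partial\Sigma$ near $x$ together with the even reflection, which is compatible with the Neumann condition, as set up in Section~\ref{sec:2}. This turns a boundary blow-up into an interior one for a reflected system with doubled mass. Consequently Lemma~\ref{lemma:identity} and the quantization from Section~\ref{sec:2} (following \cite{LinWeiZhang2015}) give $(\sigma_1(x),\sigma_2(x))\in\Lambda_{\gamma_1(x),\gamma_2(x)}(x)$. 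For $x\notin\cQ$ this reduces to the five listed pairs, with $\varrho(x)=4\pi$ or $2\pi$.

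\textbf{Step 3: the key dichotomy.} We claim some component $i$ both blows up and has zero residual, i.e.\ $V_ie^{w_i^n}dv_g$ converges weakly to $\sum_{x\in\cS}\sigma_i(x)\delta_x$ with no absolutely continuous part. This is the main obstacle. The standard argument, as in \cite{battaglia_jevnikar_ruiz2015,LinWeiZhang2015}, runs as follows.
\begin{itemize}
\item Away from $\cS$, if $w_i^n$ stays bounded below somewhere, then $w_i^n$ converges locally there.
\item Near each $x\in\cS$ where $\sigma_i(x)>0$, Green representation shows that $u_i^n$ behaves like $\sum_j a_{ij}\sigma_j(x)\,G^g(\cdot,x)$ plus a bounded term.
\item Using $\sigma_1^2+\sigma_2^2-\sigma_1\sigma_2=\sum_j\varrho(1+\alpha_j)\sigma_j$, one gets $2\sigma_i-\sigma_j>\varrho(1+\alpha_i)$ for some $i$ with $\sigma_i>0$. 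This non-integrability of $V_ie^{u_i}$ near $x$ is incompatible with a residual $L^1$ part.
\item Hence $w_i^n\to-\infty$ locally uniformly on $\Sigma\setminus\cS$ for that $i$.
\end{itemize}
The delicate points are boundary poles and singular points lying on $\partial\Sigma$. There the reflection must be done carefully in isothermal coordinates, and the Green function expansion near boundary poles is supplied by Lemma~\ref{lem:Green-properties}.

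\textbf{Step 4: conclusion.} From Step 3 we obtain $\rho_i=\sum_{x\in\cS}\sigma_i(x)$. Each non-singular interior point contributes $4\pi k$ with $k\in\{0,1,2\}$. Each non-singular boundary point contributes $2\pi k'$ with $k'\in\{0,1,2\}$. Each $x\in\cQ$ contributes an element of $\pi_i(\Lambda_{\gamma_1(x),\gamma_2(x)}(x))$, which includes $0$. Therefore
\[
\rho_i\in\Gamma_i,\qquad\text{so}\qquad \rho\in\Gamma,
\]
contradicting $\rho\in\cA\subset\mathbb{R}^2_{>0}\setminus\Gamma$. Hence $\{u^n\}$ is bounded. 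Bounded sequences are precompact in $\cH$ by the argument of Step 1, and compactness of $\cA$ lets the limit $\rho$ range over all of $\cA$.
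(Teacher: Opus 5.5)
Your overall architecture matches the paper's. No blow-up gives uniform $W^{2,p}$ bounds. Otherwise $\cS$ is finite, and reflection plus Lemma~\ref{lemma:identity} and the Lin--Wei--Zhang classification place $(\sigma_1(x),\sigma_2(x))$ in $\Lambda_{\gamma_1(x),\gamma_2(x)}(x)$; then one residual must vanish, and $\rho_i\in\Gamma_i$.

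The gap is in Step~3. There you assert that near $x\in\cS$ the limit of $u_i^n$ is $\sum_j a_{ij}\sigma_j(x)G^g(\cdot,x)$ plus a \emph{bounded} term, and you attribute this to the Green representation. That extra term contains the regular part $s_i$ of Lemma~\ref{W-C-B}, whose Green representation reads
\[
s_i=2\int_\Sigma G^g(\cdot,p)\,\mathrm{r}_i(p)\,dv_g(p)-\int_\Sigma G^g(\cdot,p)\,\mathrm{r}_j(p)\,dv_g(p)+O(1),\qquad j\neq i .
\]
Because the off-diagonal Cartan entry is $-1$, the $\mathrm{r}_j$-integral enters with a minus sign. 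If both residuals are nontrivial and $\mathrm{r}_j$ is only known to be $L^1$ near $x$, nothing in this formula prevents $s_i\to-\infty$ at $x$. In that case $\hat V_ie^{s_i}$ need not inherit the non-integrability of $\hat V_i\simeq d_g(\cdot,x)^{2\gamma_i(x)-2\Lambda_i(x)/\varrho(x)}$, where $\Lambda_i(x):=\sum_j a_{ij}\sigma_j(x)$. So your step ``non-integrability is incompatible with an $L^1$ residual'' is unjustified exactly in the configuration that has to be excluded.

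This is what Lemma~\ref{lemma:w2p} is for. The paper uses that $w=(2s_1+s_2)/3=\int_\Sigma G^g(\cdot,p)\mathrm{r}_1(p)\,dv_g(p)+O(1)$ is bounded below. It then combines convexity of the exponential with a local bootstrap ($\mathrm{r}_1\in L^p$ near $x$ gives $s_2\geq-C$ near $x$). This shows that a nontrivial residual forces $\Lambda_i(x)<\varrho(x)(1+\gamma_i(x))$, and Lemma~\ref{lemma:gre} plays this against the non-strict consequence of the Pohozaev identity.

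Your own route can be repaired more cheaply, because you correctly extract the \emph{strict} inequality from Pohozaev:
\[
\sum_i\sigma_i(x)\bigl(\Lambda_i(x)-\varrho(x)(1+\gamma_i(x))\bigr)=\sigma_1^2+\sigma_2^2-\sigma_1\sigma_2>0 .
\]
Hence some $i$ with $\sigma_i(x)>0$ has $\kappa:=2\Lambda_i(x)/\varrho(x)-2\gamma_i(x)>2$. With strictness you need no lower bound on $s_i$, only exponential integrability of $-s_i$. This follows from \cite{brezis_uniform_1991}, just as the paper gets $e^{s_i}\in L^p$, since $\Delta_g s_i\in L^1$. If $\mathrm{r}_i\not\equiv0$, then $\mathrm{r}_i=\hat V_ie^{s_i}$ by Remark~\ref{Bdd-formula-ri}. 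Taking $a\in[2/\kappa,1)$, H\"older gives
\[
\int_{U_r(x)}\hat V_i^{\,a}\,dv_g\leq\Bigl(\int_{U_r(x)}\hat V_ie^{s_i}\,dv_g\Bigr)^{a}\Bigl(\int_{U_r(x)}e^{-\frac{a}{1-a}s_i}\,dv_g\Bigr)^{1-a}.
\]
The left-hand side is infinite while the last factor is finite, which contradicts $\mathrm{r}_i\in L^1$. So $\mathrm{r}_i\equiv0$ follows directly, without the case analysis of Lemma~\ref{lemma:w2p}. The saving is local to this step, since the paper's proof of Lemma~\ref{lemma:identity} itself invokes Lemma~\ref{lemma:w2p} for the regular parts. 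With this replacement for the ``bounded term'' claim, your Steps~1, 2 and~4 go through as in the paper.
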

	Compactness results and blow-up analysis for Toda systems on closed surfaces with $\beta = 0$ have been established in \cite{jost_lin_wang2006,Battaglia2015a_note,LinWeiZhang2015}. In contrast, Theorem \ref{thm:cpt_sols} treats compact  surfaces with boundary and includes the linear term $\beta u_i$. Moreover, the singular points are allowed to lie on $\partial\Sigma$. The boundary blow-up analysis is carried out using boundary isothermal coordinates and an even reflection, while the term involving $\beta$ enters as a lower-order term in the local analysis.
	
	We emphasize that, as far as the singular data are concerned, Theorem \ref{thm:cpt_sols} only requires $\gamma_{il}>-1$, and the singular points are allowed to lie on $\partial\Sigma$. The additional assumptions $\gamma_{il}\ge 0$ and $\cQ\subset\mathring{\Sigma}$ will be imposed only for the existence result below.
	
	\medskip
	
	We next turn to the existence problem. We approach it by a variational method, which is based on a min-max scheme in the spirit of Djadli and Malchiodi  \cite{DjadliMalchiodi2008} and then carried out for the system case in \cite{battaglia_jevnikar_ruiz2015}, studying the topology of low sublevels of $J_{\beta,\rho}$. By considering suitable retractions, taking care of the points weights (see the discussion below), we obtain the following general existence result.
	
	\medskip
	
	\begin{theorem}~\label{thm:main1}
		Suppose that 
	 $-\beta$ is not an eigenvalue   of $-\Delta_g$ restricted to the zero-mean Neumann space, $\gamma_{ij} \ge 0$ for all $i=1,2,j=1,\cdots, \iota$, and the singular point set $\cQ:=\{ p_l\}_{l=1}^{\iota} \subset \intsigma$. 
		If $\Sigma$ is a  surface  with  $\rb\geq 2$, where $\rb$ denotes the number of connected components of $\partial\Sigma$, then the problem~\eqref{eq:toda} has a solution for $\rho \in \R^2_{>0} \setminus \Gamma$.
	\end{theorem}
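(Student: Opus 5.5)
The plan follows the Djadli--Malchiodi min-max scheme as adapted to systems in \cite{battaglia_jevnikar_ruiz2015}, with the boundary assumption $\rb\ge 2$ used to build a non-contractible retract of $\Sigma$. Theorem \ref{thm:cpt_sols} gives compactness of solutions for $\rho$ in compact subsets of $\R^2_{>0}\setminus\Gamma$. Together with Struwe's monotonicity trick, which yields bounded Palais--Smale sequences for a.e.\ $\rho$ near a given one, this gives a deformation lemma: if $J_{\beta,\rho}$ has no critical points, then for $L$ large the sublevel $J_{\beta,\rho}^{L}:=\{J_{\beta,\rho}\le L\}$ is a deformation retract of $\cH$. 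In that case it is contractible. So it suffices to show that a very negative sublevel $J_{\beta,\rho}^{-L}$ is \emph{not} contractible. The $\beta$-term is harmless throughout: $\beta\int Q_0$ is controlled by $\|u\|_{L^2}^2\le C\|u\|_{\oH}^2$, and it is a lower-order term in every concentration estimate. We may therefore fix $\rho_i\in(2\pi k_i,2\pi(k_i+1))$ away from $\Gamma$ and treat $\beta$ as a perturbation.

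\textbf{Step 1 (Moser--Trudinger on surfaces with boundary and improved versions).} Start from \eqref{eq:MT-b} and its system version, with threshold $2\pi$ per component; a boundary point costs $2\pi$ and an interior point costs $4\pi$. Prove a localized improvement: if $V_ie^{u_i}/\int V_ie^{u_i}$ has mass spread over $\ell+1$ well-separated regions, then the constant improves accordingly. Weights $\gamma\ge0$ only help, since $V_i\lesssim 1$ near $p_l$. Deduce that on $J_{\beta,\rho}^{-L}$ at least one component, say $u_i$, concentrates near a \emph{small-weight configuration}. Concretely, the measure $V_ie^{u_i}/\int V_ie^{u_i}$ is close, in the $\mathrm{Lip}'$ sense, to a formal barycenter $\sum t_j\delta_{x_j}$ whose weighted count $\sum_j \varrho(x_j)/2\pi$ is at most $k_i$. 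Interior points count twice, and since $\cQ\subset\intsigma$ with $\gamma\ge 0$ they only cost more. Because $\rb\ge2$, I will use only boundary-type configurations for the topology, which bypasses the weight bookkeeping.

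\textbf{Step 2 (retraction onto a curve).} Since $\partial\Sigma$ has at least two components, take a boundary component $\Gamma_0\cong S^1$ and a retraction $\Pi:\Sigma\to\Gamma_0$. One way to build it is to glue along a second component: $H_1(\Sigma)$ contains a class detected by $\Gamma_0$, giving a map $\Sigma\to S^1$ that restricts to a degree-one map on $\Gamma_0$. Composing Step 1 with $\Pi$, I will construct, for each component, a continuous map from the ``concentrated'' part of the sublevel to the barycenter space $(\Gamma_0)_{k_i}$. The full map is $\Psi:J_{\beta,\rho}^{-L}\to X:=(\Gamma_0)_{k_1}*(\Gamma_0)_{k_2}$, a join encoding which component concentrates. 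This is analogous to \cite{battaglia_jevnikar_ruiz2015}, with the circle $\Gamma_0$ replacing the wedge of circles. When $\rho_i<2\pi$ we set $(\Gamma_0)_{0}=\emptyset$ and the join degenerates to a single factor.

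\textbf{Step 3 (test functions and non-contractibility).} Conversely, build $\Phi:X\to J_{\beta,\rho}^{-L}$ by taking, for $\sigma=\sum t_j\delta_{x_j}$ with $x_j\in\Gamma_0$, boundary bubbles $\varphi_{\lambda,\sigma}=\log\sum_j t_j\big(\lambda/(1+\lambda^2 d(x,x_j)^2)\big)^2$ in each component. The two components are glued through the join parameter as in \cite{battaglia_jevnikar_ruiz2015}, with different concentration scales for $u_1$ and $u_2$ so that the interaction term $\langle\nabla u_1,\nabla u_2\rangle$ is controlled. Show that $J_{\beta,\rho}(\Phi)\to-\infty$ uniformly as $\lambda\to\infty$; this uses $\rho_i>2\pi k_i$, half-bubbles at boundary points, and $\cQ\subset\intsigma$ so that $V_i$ is bounded below on $\Gamma_0$. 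Then show that $\Psi\circ\Phi$ is homotopic to the identity on $X$. Since $(\Gamma_0)_k\simeq S^{2k-1}$, the join $X$ is a sphere of positive dimension and hence not contractible. It follows that $J_{\beta,\rho}^{-L}$ is not contractible, which contradicts the deformation lemma, so a solution exists.

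The main obstacle will be Step 1 combined with the definition of $\Psi$. We need an improved Moser--Trudinger inequality that mixes interior and boundary concentration with the correct costs ($4\pi$ versus $2\pi$, shifted by the weights at $p_l$). We also need the resulting projection onto the barycenters of $\Gamma_0$ to be continuous, including on the overlap where both components are concentrated. I would first prove the scalar covering lemma on surfaces with boundary using reflection in boundary isothermal coordinates. Then I would adapt the ``scales of concentration'' argument of \cite{battaglia_jevnikar_ruiz2015} to define the join coordinate continuously.
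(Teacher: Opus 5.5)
\textbf{Gap 1: the $\beta$-term is not a perturbation when $I\ge 1$.} You claim the $\beta$-term is harmless, but this fails when $\beta<-\lambda_1$, i.e.\ $I\ge1$, a regime the theorem covers. There $\int_\Sigma(Q+\beta Q_0)\,dv_g$ is indefinite. With $\|\psi_1\|_{\oH}=1$ and $u=(t\psi_1,t\psi_1)$ it equals $t^2(1+\beta/\lambda_1)\to-\infty$, while $\log\int_\Sigma V_ie^{t\psi_1}dv_g\ge\log\int_{\{\psi_1\ge0\}}V_i\,dv_g$ stays bounded below. So $J_{\beta,\rho}(u)\to-\infty$, and the improved Moser--Trudinger inequality of your Step~1 (with $\int(Q+\beta Q_0)$ on the right) is false for such $u$. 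Nothing then forces $V_ie^{u_i}/\int_\Sigma V_ie^{u_i}$ to lie near $(\Sigma_\partial)_{K_i}$: for $K_i=1$ and $\psi_1$ peaking in $\intsigma$, it concentrates at an interior point.

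This is why Lemma~\ref{lem:almost_MT} is proved only under $\|\Pi_Iu_i\|\le 2$. Its proof splits $u_i$ into the negative modes, a finite band of positive modes controlled in $L^\infty$, and high modes on which $\beta Q_0$ is genuinely small. The resulting alternative is ``concentration \emph{or} $\|\Pi_Iu_i\|\ge2$'' (Lemma~\ref{lem:large_proj_no_concentration_2}). The negative directions must therefore enter the topology. The paper's target is $\mathbb{S}^{I-1}\star(\bga_1)_{\partial,K_1}\star\mathbb{S}^{I-1}\star(\bga_2)_{\partial,K_2}$, with test functions containing $\sqrt{\log^+(s\Lambda)}\sum_{j\le I}X_{i,j}\psi_j$, and $\Psi$ uses $\Pi_I(2u_i+u_j)/\|\Pi_I(2u_i+u_j)\|$. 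In particular, your scheme says nothing when $K_1=K_2=0<I$. Your join is empty there, yet $J_{\beta,\rho}$ is unbounded below, so there is no minimizer and one needs $\mathbb{S}^{I-1}\star\mathbb{S}^{I-1}$.

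\textbf{Gap 2: one curve for both components breaks Step~3.} In $(\Gamma_0)_{k_1}\star(\Gamma_0)_{k_2}$ the two concentration scales must cross as the join parameter runs from one end to the other, since at each end one component has to become irrelevant, i.e.\ flat. At the crossing the centres may coincide. Take $\varphi_1=v_1-\frac12v_2$, $\varphi_2=v_2-\frac12v_1$ with bubbles of equal scale $\mu$ at the same boundary point. Then $\varphi_1=\varphi_2=\frac12(v-\overline v)$, so $\int_\Sigma Q\,dv_g\approx4\pi\log\mu$ while $\log\int_\Sigma V_ie^{\varphi_i}dv_g=\log\log\mu+O(1)$. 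Hence $J_{\beta,\rho}\to+\infty$ there, and no choice of ``different scales'' lets $\Phi$ map the whole join into $J^{-L}_{\beta,\rho}$. This is why, when both components can concentrate at the same point, one must remove a diagonal-type set from the join, as in \cite{MalchiodiRuiz2013}, which changes the topology. Note also that \cite{battaglia_jevnikar_ruiz2015} uses two disjoint curves, not a wedge of circles.

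The missing idea is exactly how the paper uses $\rb\ge2$. It gives two \emph{disjoint} boundary curves $\bga_1,\bga_2$, each a retract of $\Sigma$. Placing the $u_1$-bubbles on $\bga_1$ and the $u_2$-bubbles on $\bga_2$ gives $\bigl|\int_\Sigma\langle\nabla v_1,\nabla v_2\rangle_g\,dv_g\bigr|\le C$. The interaction then disappears and the full join is admissible.
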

	
	One of the difficulties in the topological description of low sublevels is the interaction between $u_1$ and $u_2$, and the presence of singular points. Retractions on two disjoint curves not touching the singularities avoid these issues as in \cite{battaglia_jevnikar_ruiz2015}. However, a truly new difficulty is due to the effect of the boundary. Indeed, as will be evident in the sequel, concentration on the boundary is related to a different Moser-Trudinger inequality, see Lemma \ref{lem:local_MT}. Therefore, we are forced to distinguish between interior and boundary points, by considering their \emph{weights}. In the same spirit as \cite{car-mal} (where the distinction is between regular and singular points) the weight of an interior point will be $4\pi$, while for a boundary point $2\pi$, in accordance with the Moser-Trudinger constant. This mixed scenario makes the description of low sublevels very complicated. The crucial observation is that we can still map injectively the homology of the boundary concentration, where the weight is less compared to the interior part, into that of low sublevels. This can be done again by considering two retractions onto two curves $\bga_1, \bga_2\subset \pa\Sigma$. Here we exploit the assumption on  $\partial\Sigma$ having at least two connected components.
	
	\medskip
	
	The presence of the linear term $\beta u_i$ introduces an additional difficulty, as the operator $-\Delta_g + \beta$ need not be positive. 
	Let $0<\lambda_1\le\lambda_2\le\cdots$ be the positive Neumann eigenvalues of $-\Delta_g$ on the zero-mean space, counted with multiplicity, and set $\lambda_0=-\infty$.
	If $-\lambda_{I+1}<\beta<-\lambda_I$ for some $I \in \N_0$, the first $I$ eigendirections give rise to the negative part of the quadratic form. 
	Following the approach in \cite{Battaglia2018}, we incorporate these negative directions into the topology of the low sublevels through an $S^{I-1}$-factor.

	\

	The rest of the paper is organized as follows.
	In Section~\ref{sec:2}, we carry out a blow-up analysis for the singular 
	Toda system \eqref{eq:toda} on compact  surfaces with boundary. This leads to the quantization of local limiting masses and establishes the global compactness.
	In Section~\ref{sec:3}, we study the topological structure of the sublevel sets by introducing suitable barycenter spaces and retractions.
	Finally, in Section~\ref{sec:4}, we prove the existence result stated in Theorem~\ref{thm:main1}, using the min-max methods jointly with the compactness result. 
	
	\medskip
	
	{\bf Notations. }
	For any points $x,y\in \Sigma$, 
	$d_g(x,y)$ denotes the metric distance between $x$ and $y$ with respect to the Riemannian metric $g$. For any point $x\in \Sigma$ and subsets $\Omega, \Omega' \subseteq \Sigma$, 
	\[ d_g(x, \Omega):=\inf\{d_g(x,y): y\in \Omega\}\text{ and } d_g(\Omega,\Omega'):=\inf\{d_g(x,y): x\in \Omega, y\in \Omega'\}. \]
	We denote the open ball with radius $r$ centered at $x$ with respect to metric g as $U_{r}(x)$, the open ball centered at $x$ with radius $r$ in $\mathbb{R}^2$ as $\B_r(x)$ and $\B_r:=\B_r(0)$ and {$\B^+_r:=\B_r\cap \{y=(y_1,y_2)\in \R^2: y_2 > 0\}$, $I_r:=\{y\in\R^2: |y|<r,\ y_2=0\}$}. 
	
	For any $v\in L^1(\Sigma)$ and $\Omega\subseteq \Sigma$, we denote
	$|\Omega|_g:=\int_\Omega d v_g$, 
	$$\dashint _{\Omega} v:= \frac 1 {|\Omega|_g} \int_{\Omega} v dv_g, \qquad \text{ and }\qquad  \bar{v}:= \dashint_{\Omega} v d v_g.$$
	
	The sublevels of functional $J_{\beta,\rho}$ are defined as follows: 
	\[ J^a_{\beta,\rho}:=\left\{u=(u_1,u_2)\in \oH\times\oH: J_{\beta,\rho}(u_1,u_2)\leq a \right\}. \]
	
	{Given nonempty topological spaces $A_j \text{ for } j=1,\cdots,k$, we define the join $A_1\star \cdots\star A_k$ by
		\[ A_1\star \cdots \star A_k= (A_1\times \cdots\times A_k\times \Delta^{k-1})/\sim,\]
		where $\Delta^{k-1}=\{(t_1,\cdots,t_k) \in \R^k : \sum_{i=1}^k t_i=1, t_i \ge 0\}$ is the simplex and the equivalence relation $\sim$ is given by
		\[
		(a_1,\ldots,a_k;t_1,\ldots,t_k)\sim (a_1',\ldots,a_k';t_1,\ldots,t_k) 
		\]
		if $a_i=a_i'$  for every $i$  such that $t_i>0$.    In other words, the $i$-th coordinate is disregarded whenever $t_i=0$.
		For the joins used in the sequel, we adopt the convention that an empty factor is omitted; in particular,
		$X\star\emptyset=\emptyset\star X=X$.
		When we use simplex coordinates for all factors of the join, the coordinate associated with an empty factor is taken to be zero.}
	

	The letter $C>0$ will be a large constant that may vary among different formulas or even within the same line. Throughout the paper, we do not distinguish between sequences and subsequences.
	\par

	\section{Blow-up analysis}\label{sec:2}
	
	In this section, we study the compactness and blow-up behavior of solutions to \eqref{eq:toda}. 
	Our analysis is mainly based on \cite{jost_lin_wang2006,jostandwang2001,Lucia2002Vortex,LinWeiZhang2015}, where the corresponding problems are considered on planar domains or closed surfaces. 
	Since $\Sigma$ is a compact  surface with boundary, we use boundary isothermal coordinates as in \cite{yang_wang2021}, together with the reflection argument from \cite{wang_wei2002_steady}, to handle the homogeneous Neumann boundary condition.
	
	Before turning to the blow-up analysis, we collect some properties of the Green function that will be used throughout the paper.
	
	\begin{lemma}\label{lem:Green-properties}
		Assume that  $-\beta$ is not an eigenvalue   of $-\Delta_g$ restricted to the zero-mean Neumann space. For each $p\in\Sigma$, let $G_p^g=G^g(\cdot,p)$ be the Green function defined by \eqref{Green-F-G}. 
		More precisely, for every $p\in\Sigma$, including the case $p\in\partial\Sigma$, $G_p^g$ satisfies
		 \begin{equation*}
			\int_\Sigma \left( \langle\nabla_{g,x}G^g(x,p),\nabla_g\psi(x)\rangle_g +\beta G^g(x,p)\psi(x) \right)\,dv_g(x) = \psi(p)-\int_\Sigma\psi\,dv_g 
		\end{equation*} 
		for every $\psi\in C^\infty(\Sigma)$.
		
		Then, for each fixed $p\in\Sigma$,
		\[
		G^g(x,p)=-\frac{2}{\varrho(p)}\log d_g(x,p)+H_p^g(x) \qquad\text{as } \quad x\to p,
		\]
		where $H_p^g$ is bounded.
		Moreover, there exists $C>0$, independent of $x, p \in \Sigma$, such that $G^g(x,p)\geq-C$, and
		\[
		|G^g(x,p)|
		\leq C\bigl(1+|\log d_g(x,p)|\bigr),
		\qquad
		|\nabla_{g,x}G^g(x,p)|
		\leq \frac{C}{d_g(x,p)}
		\]
		for all $x,p\in\Sigma$ with $x\neq p$.
		Consequently, for every $q\in(1,2)$, there exists $C_q>0$ such that
		\[
		\sup_{p\in\Sigma} \|G_p^g\|_{W^{1,q}(\Sigma)} \leq C_q.
		\]
	\end{lemma}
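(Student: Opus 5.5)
We will build $G^g_p$ as an explicit singular part plus a regular correction, and then read off every claimed estimate from this decomposition.

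\textbf{Construction.} Fix $p\in\Sigma$ and a cutoff $\chi$ supported in a coordinate neighbourhood of $p$, with $\chi\equiv1$ near $p$. If $p\in\intsigma$ we use isothermal coordinates centred at $p$. If $p\in\partial\Sigma$ we use boundary isothermal coordinates as in \cite{yang_wang2021}, in which $\Sigma$ is locally $\overline{\B^+_r}$, $\partial\Sigma$ corresponds to $I_r$, and $g=e^{\phi}|dy|^2$. We set
\[
\Phi_p(x):=-\frac{2}{\varrho(p)}\,\chi(x)\log|y(x)|,
\]
and look for $G^g_p=\Phi_p+H_p$.

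\textbf{Properties of $\Phi_p$.} In two dimensions $-\Delta$ is conformally invariant up to the factor $e^{-\phi}$. For interior $p$ this gives $-\Delta_g(-\tfrac{1}{2\pi}\log|y|)=\delta_p$ near $p$. For $p\in\partial\Sigma$ only the half-disc is present, so the coefficient $\tfrac1\pi=\tfrac{2}{2\pi}$ is needed to produce a unit mass. Moreover $\partial_{y_2}\log|y|=0$ on $I_r$, so $\Phi_p$ satisfies the Neumann condition exactly near $p$. Consequently $f_p:=-\Delta_g\Phi_p+\beta\Phi_p-(\delta_p-1)$ is a function. It is smooth away from $p$, except for the term $\beta\Phi_p$, which is $O(|\log d_g|)$ and hence lies in $L^s$ for every $s<\infty$. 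Its norms are bounded uniformly in $p$, since the boundary coordinates can be chosen to depend smoothly on $p$ in a collar. The Neumann datum $\partial_\nu\Phi_p$ is smooth and supported away from $p$.

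\textbf{Regular part.} We then solve for $H_p$:
\[
-\Delta_g H+\beta H=-f_p \ \text{in } \intsigma,\qquad \partial_{\nu_g}H=-\partial_{\nu_g}\Phi_p \ \text{on } \partial\Sigma,\qquad \int_\Sigma H=-\int_\Sigma\Phi_p .
\]
The nondegeneracy assumption on $-\beta$ enters only here. Integrating the equation shows that the mean of $H$ is forced to equal $\beta^{-1}\times(\text{data})$ when $\beta\neq0$. On the zero-mean part, the Fredholm alternative applies: $-\Delta_g+\beta$ is invertible on $\oH$ because $-\beta$ is not a Neumann eigenvalue there.

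One point needs care. The data must be compatible with the normalization $\int_\Sigma G=0$ required in \eqref{Green-F-G}. When $\beta\neq0$ the equation itself forces $\int G=0$, because $\int(\delta_p-1)=0$. When $\beta=0$ we impose $\int G=0$ directly, and solvability holds since the total flux vanishes.

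Elliptic $L^s$ and Schauder theory then give $H_p\in W^{2,s}\subset C^{1,\alpha}$, with bounds uniform in $p$. From this:
\begin{itemize}
\item $H_p$ is bounded, and $|\nabla H_p|\le C$;
\item the expansion $G^g(x,p)=-\tfrac{2}{\varrho(p)}\log d_g(x,p)+H^g_p(x)$ holds, using $\log|y|-\log d_g=O(1)$;
\item the weak formulation follows by integrating by parts against $\psi$ on the complement of a small (half-)ball around $p$ and letting the radius tend to $0$: the boundary flux of $\log|y|$ contributes exactly $\psi(p)$.
\end{itemize}

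\textbf{Uniform estimates.} The expansion together with $\sup_p\|H_p\|_{C^1}\le C$ immediately gives $|G|\le C(1+|\log d_g|)$ and $|\nabla G|\le C/d_g$. It also gives the bound $G\ge-C$, since $-\log d_g\ge -\log\operatorname{diam}\Sigma$. The $W^{1,q}$ bound for $q<2$ follows from $\int_\Sigma d_g(x,p)^{-q}\,dv_g\le C_q$, uniformly in $p$.

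\textbf{Main obstacle.} The only delicate point is uniformity as $p$ approaches $\partial\Sigma$ from the interior. There the interior ansatz $-\tfrac1{2\pi}\log$ degenerates into the boundary one, so the interior construction alone does not give bounds uniform in $p$. To handle this we would replace the interior ansatz for $p$ in a collar by the reflected one,
\[
-\frac{1}{2\pi}\bigl(\log|y-y_p|+\log|y-y_p^*|\bigr),\qquad y_p^*=\text{reflection of } y_p \text{ across } I_r,
\]
which satisfies the Neumann condition exactly and reduces to $-\tfrac1\pi\log|y|$ when $y_p\in I_r$. The reflected term satisfies $\log|y-y_p^*|\ge\log|y-y_p|$ and is bounded above. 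Hence both the lower bound and the log bound persist, with $d_g$ comparable to $|y-y_p|$. The gradient estimate also persists, because $|y-y_p^*|\ge|y-y_p|$.
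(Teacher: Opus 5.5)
Your proposal is correct, but it follows a different route from the paper. The paper does not construct $G^g_p$ at all. It starts from the normalized Neumann Green function $G^g_{0,p}$ of $-\Delta_g$, whose logarithmic and gradient bounds (including poles on $\partial\Sigma$, uniformly in $p$) are quoted from \cite{yang_wang2021}. It then observes that $W_p:=G^g_p-G^g_{0,p}$ solves $(-\Delta_g+\beta)W_p=-\beta G^g_{0,p}$ with homogeneous Neumann data. Since $\sup_p\|G^g_{0,p}\|_{L^r}<\infty$ for every $r<\infty$, the nondegeneracy of $-\beta$ and the $W^{2,r}$ Neumann estimate give $\sup_p\|W_p\|_{C^{1,\alpha}}<\infty$, and every claim transfers from $G^g_{0,p}$.

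That argument is short and isolates $\beta$ as a lower-order perturbation. However, it outsources all the geometric content to the cited $\beta=0$ result: the constant $2/\varrho(p)$ and the uniformity as $p$ approaches $\partial\Sigma$.

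Your parametrix construction is self-contained:
\begin{itemize}
\item the half-disc flux explains the factor $2/\varrho(p)$;
\item the Fredholm step handles $\beta$ and the normalization at once (the mean is forced when $\beta\neq0$, and imposed with vanishing total flux when $\beta=0$);
\item the method-of-images ansatz $-\frac{1}{2\pi}\bigl(\log|y-y_p|+\log|y-y_p^*|\bigr)$, via $|y-y_p^*|\ge|y-y_p|$, is exactly the mechanism that keeps the bounds uniform for interior $p$ approaching the boundary.
\end{itemize}

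The price is bookkeeping. You need a uniform family of interior and boundary isothermal charts and cutoffs of fixed scale. A finite atlas together with translations along $y_1$ suffices, so ``smooth dependence on $p$'' is more than you need. You should also say explicitly that, for $p$ in the collar, the reflected term is part of $H^g_p$. It is bounded for each fixed $p$, which is all the expansion in the statement requires, but not uniformly in $p$; the uniform estimates come instead from the inequality above.
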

	\begin{proof}
		Let $G_{0,p}^g$ be the normalized Neumann Green function associated with $-\Delta_g$. The corresponding logarithmic and gradient estimates, including the case $p\in\partial\Sigma$, are established in \cite[Lemma~6]{yang_wang2021}.
		
		Set $W_p:=G_p^g-G_{0,p}^g$.
		Then $W_p$ satisfies
		\[
		\begin{cases}
			(-\Delta_g+\beta)W_p=-\beta G_{0,p}^g
			&\text{ in }\intsigma,\\
			\partial_{\nu_g}W_p=0
			&\text{ on }\partial\Sigma.
		\end{cases}
		\]
		Since $\sup_{p\in\Sigma}\|G_{0,p}^g\|_{L^r(\Sigma)}\leq C_r$ for $r\in (1,\infty)$ and  $-\beta$ is not an eigenvalue   of $-\Delta_g$ restricted to the zero-mean Neumann space, the standard $W^{2,r}$-estimate for the Neumann problem yields $\sup_{p\in\Sigma}\|W_p\|_{W^{2,r}(\Sigma)}\leq C_r$.
		Choosing $r>2$, we obtain a uniform $C^{1,\alpha}$-bound for $W_p$. Hence, all the assertions follow from the corresponding properties of $G_{0,p}^g$. 
	\end{proof}
	
	As a corollary, we obtain the following Green representation formula.
	\begin{lemma}\label{cor:Green-potential}
		Let $f\in L^1(\Sigma)$ with $\int_\Sigma f dv_g=0$ and define
		\[
		u(x):=\int_\Sigma G^g(x,p)f(p) dv_g(p).
		\]
		Then, for every $q\in(1,2)$, $u\in W^{1,q}(\Sigma)$ and
		\[
		\|u\|_{W^{1,q}(\Sigma)} \leq C_q\|f\|_{L^1(\Sigma)}.
		\]
		Moreover, $u$ is the unique solution in $W^{1,q}(\Sigma)$ of 
		\[ \begin{cases} 
			-\Delta_g u+\beta u=f &\text{in }\intsigma, \\
			\partial_{\nu_g}u=0 &\text{on }\partial\Sigma,\\
			\int_\Sigma u dv_g=0, &
		\end{cases} \]
		in the weak sense, where the weak formulation is understood with test functions in $C^\infty(\Sigma)$.
	\end{lemma}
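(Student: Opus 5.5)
The plan is to obtain everything from the pointwise and uniform bounds of Lemma~\ref{lem:Green-properties}: Minkowski's integral inequality gives the estimate, a Fubini duality argument gives the weak formulation, and the non-eigenvalue assumption gives uniqueness.

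\emph{Integrability and the estimate.} Fix $q\in(1,2)$. Since $G^g$ is measurable in $(x,p)$, Minkowski's integral inequality applies:
\[
\|u\|_{W^{1,q}(\Sigma)}\le \int_\Sigma \|G^g(\cdot,p)\|_{W^{1,q}(\Sigma)}|f(p)|\,dv_g(p)\le C_q\|f\|_{L^1(\Sigma)}.
\]
Here the last step uses the uniform bound $\sup_p\|G_p^g\|_{W^{1,q}}\le C_q$ from Lemma~\ref{lem:Green-properties}. To identify the distributional gradient, first note that $|\nabla_{g,x}G^g(x,p)|\le C/d_g(x,p)$ is integrable in $(x,p)$ against $|f(p)|$, because $\int_\Sigma d_g(x,p)^{-1}\,dv_g(x)\le C$ uniformly in $p$. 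Hence $\nabla u(x)=\int_\Sigma\nabla_{g,x}G^g(x,p)f(p)\,dv_g(p)$, which one checks by testing against smooth vector fields and applying Fubini. For the mean, Fubini together with $\int_\Sigma G^g(x,p)\,dv_g(x)=0$ gives $\int_\Sigma u\,dv_g=0$.

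\emph{Weak formulation.} Take $\psi\in C^\infty(\Sigma)$. The same integrability bounds (with $|G^g|\le C(1+|\log d_g|)$) justify Fubini, so
\[
\int_\Sigma\bigl(\langle\nabla u,\nabla\psi\rangle_g+\beta u\psi\bigr)dv_g=\int_\Sigma f(p)\Bigl(\psi(p)-\int_\Sigma\psi\,dv_g\Bigr)dv_g(p)=\int_\Sigma f\psi\,dv_g.
\]
The first equality is the identity in Lemma~\ref{lem:Green-properties}, valid for every $p$ including $p\in\partial\Sigma$. The second uses $\int_\Sigma f\,dv_g=0$. This is exactly the weak Neumann problem, and $\int_\Sigma u\,dv_g=0$ was shown above.

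\emph{Uniqueness.} This is the main obstacle, since $w\in W^{1,q}$ with $q<2$ is not a priori in $H^1$ and cannot simply be tested with itself. Let $w\in W^{1,q}$ be the difference of two solutions. Then $w$ has zero mean and satisfies $\int_\Sigma(\langle\nabla w,\nabla\psi\rangle_g+\beta w\psi)\,dv_g=0$ for all $\psi\in C^\infty(\Sigma)$.
\begin{itemize}
\item First I would try duality. Take any smooth $\phi$ with zero mean, and let $\psi$ solve $-\Delta_g\psi+\beta\psi=\phi$, $\partial_{\nu_g}\psi=0$, $\int_\Sigma\psi\,dv_g=0$. This $\psi$ exists by the non-eigenvalue assumption and is smooth up to the boundary by elliptic regularity.
\item Integrating by parts (the boundary term vanishes because $\partial_{\nu_g}\psi=0$) gives $\int_\Sigma w\phi\,dv_g=0$. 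Since $\phi$ ranges over all zero-mean smooth functions and $w$ has zero mean, $w=0$.
\item One subtlety: the integration by parts must be justified for $w\in W^{1,q}$ against smooth $\psi$, which follows by density of smooth functions in $W^{1,q}$.
\item A second subtlety: a constant test function gives $\beta\int_\Sigma w\,dv_g=0$. This is consistent with the zero-mean normalization, and the mean condition is needed when $\beta=0$.
\end{itemize}
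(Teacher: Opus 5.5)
Your proposal is correct and follows essentially the same route as the paper: Minkowski's integral inequality with the uniform $W^{1,q}$ bound on $G_p^g$, Fubini to identify the weak gradient and to derive the weak formulation from the defining identity of $G^g$, and a duality argument based on the non-eigenvalue assumption for uniqueness. Your duality step is slightly more careful than the paper's: you restrict to zero-mean data $\phi$ and then use $\int_\Sigma w\,dv_g=0$, whereas the paper solves $-\Delta_g\phi+\beta\phi=h$ for arbitrary $h\in C^\infty(\Sigma)$, which when $\beta=0$ implicitly requires $\int_\Sigma h\,dv_g=0$.
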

	\begin{proof}
		Fix $q\in(1,2)$ and define $Z(x):=\int_\Sigma f(p)\nabla_{g,x}G^g(x,p) dv_g(p)$. By Minkowski's integral inequality and Lemma \ref{lem:Green-properties},
		\[
		\|u\|_{L^q(\Sigma)}+ \|Z\|_{L^q(\Sigma)} \leq \int_\Sigma |f(p)| \big(\|G_p^g\|_{L^q(\Sigma)}+  \|\nabla_{g,x}G^g(\cdot,p)\|_{L^q(\Sigma)}\big) dv_g(p)\leq C_q \|f\|_{L^1(\Sigma)}.
		\]
		
		We claim that $Z=\nabla_g u$ in the sense of weak derivatives.
		Indeed, for every smooth vector field $X$ compactly supported in $\intsigma$, Fubini's theorem gives
		\[
		\begin{aligned}
			\int_\Sigma u \operatorname{div}_g X dv_g
			&=
			\int_\Sigma f(p)
			\left[
			\int_\Sigma
			G^g(x,p)\operatorname{div}_g X(x) dv_g(x)
			\right]dv_g(p)\\
			&=
			-\int_\Sigma
			\langle Z,X\rangle_g dv_g.
		\end{aligned}
		\]
		Therefore, $\nabla_g u=Z$ and $\|u\|_{W^{1,q}(\Sigma)} \leq    C_q\|f\|_{L^1(\Sigma)}$.
		
		By the defining property of the Green function, for every $p\in\Sigma$ and $\psi\in C^\infty(\Sigma)$, \[ \int_\Sigma \left( \left\langle \nabla_{g,x}G^g(x,p),\nabla_g\psi(x) \right\rangle_g + \beta G^g(x,p)\psi(x) \right) dv_g(x) = \psi(p)-\int_\Sigma\psi dv_g. \]
		Therefore, by Fubini's theorem and the assumption $\int_\Sigma f\,dv_g=0$, 
		\[ \begin{aligned} 
			\int_\Sigma \left( \langle\nabla_g u,\nabla_g\psi\rangle_g +\beta u\psi \right) dv_g &= \int_\Sigma f(p) \left( \psi(p)-\int_\Sigma\psi dv_g \right)dv_g(p) \\ 
			&= \int_\Sigma f\psi dv_g. 
		\end{aligned} \] 
		
		Finally, to prove uniqueness, let $w\in W^{1,q}(\Sigma)$ satisfy the corresponding homogeneous equation in the above weak sense. 
		For any $h\in C^\infty(\Sigma)$, let $\phi\in C^\infty(\Sigma)$ be the solution of
		\[\begin{cases}
			-\Delta_g\phi+\beta\phi=h &\text{in } \intsigma,\\
			\partial_{\nu_g}\phi=0    &\text{on }\partial\Sigma.
		\end{cases}    \]
		Such a solution exists since   $-\beta$ is not an eigenvalue   of $-\Delta_g$ restricted to the zero-mean Neumann space.
		Taking $\psi=\phi$ in the weak formulation gives
		$\int_\Sigma wh dv_g=0$.
		Since this holds for every $h\in C^\infty(\Sigma)$, we conclude that $w\equiv0$.
	\end{proof}
	
	\medskip
	
	We now begin the blow-up analysis. 
	Let $ \rho^n = (\rho^n_1, \rho^n_2) $ be a sequence of positive parameters that converges to $ \rho \in \mathbb{R}^2 $. Let $ u^n = (u^n_1, u^n_2) $ be  solutions of the Toda system~\eqref{eq:toda} corresponding to the parameter $ \rho^n$.
	

	For each $i=1,2$, let
	\begin{equation}\label{eq:def_v^n_i} 
		v^n_i = u^n_i -\log \int_{\Sigma} V_i e^{u^n_i} dv_g +\log \rho^n_i.
	\end{equation}
	Since $h_i>0$ and $V_ie^{v^n_i}$ is bounded in $L^1(\Sigma)$, up to a subsequence, there exists a non-negative finite measure $\mu_i\in \cM(\Sigma)$ such that $V_i e^{v_i^n}dv_g \rightharpoonup \mu_i$ as $n\to+\infty$ in the sense of measure, namely,
	\begin{equation}  \label{def:mu_i} 
		\int_{\Sigma}V_ie^{v^n_i}\phi \, dv_g=\int_{\Sigma} \frac{ \rho^n_i  V_i e^{u^n_i}}{\int_{\Sigma}  V_i e^{u^n_i} dv_g} \phi \, dv_g\rightarrow \int_{\Sigma} \phi \, d\mu_i, \qquad \text{for every } \phi \in C(\Sigma).
	\end{equation}
	For each $i=1,2$, define 
	\begin{equation*}
		\widetilde{\mathcal{S}}_i:=\left\{
		x\in\Sigma:\ \mu_i(x)\geq{\frac{\varrho(x)}{2}}\min\{ 1, 1+\gamma_i(x)\}\right\},
	\end{equation*}
	where $\mu(x_0):= \mu(\{x_0\})$. 
	\par

	Fix $i\in\{1,2\}$ and let $ x_0 \in\widetilde{\mathcal{S}}_i$. Then it holds that $\mu_i(x_0) \geq \frac{1}{2} \, \varrho(x_0) \min\{1, 1 + \gamma_i(x_0)\} > 0 $.
	Since $\mu_i(\Sigma)<\infty$, it follows that the set $\widetilde{\mathcal{S}}_i$ is finite for each $i=1,2$. This implies that $\mathcal{S}$  is finite by the following lemma.
	\par

	Firstly, we introduce a crucial estimate from \cite[Theorem~1]{brezis_uniform_1991} to prove the compactness-concentration property of \eqref{eq:toda}.
	\par

	\begin{lemma}[see~\cite{brezis_uniform_1991}]\label{Brezis_1991}
		Let $D \subseteq \mathbb{R}^2$ be a bounded domain, and let $f(x)\in L^1(D)$. Suppose that $u$ is a solution to the boundary value problem
		\begin{equation*}
			\begin{cases}
				\begin{aligned}
					-\Delta u&=f(x)\quad\text{ in } D,\\
					u&=0 \quad\text{ on } \partial D.
				\end{aligned}
			\end{cases}
		\end{equation*}
		\par
		
		Then for any $\delta \in(0,4 \pi)$, we have that
		\begin{equation*}
			\int_D \exp \left(\frac{(4 \pi-\delta)|u|}{\|f\|_{L^1(D)}}\right)\leqslant \frac{4 \pi^2}{\delta}(\operatorname{diam} D)^2.
		\end{equation*}
	\end{lemma}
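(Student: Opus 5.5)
The statement is the classical Brezis--Merle estimate. I would reproduce its proof through the Green representation on the whole plane, followed by Jensen's inequality. By linearity and scaling, replacing $u$ by $u/\|f\|_{L^1(D)}$, I may assume $\|f\|_{L^1(D)}=1$; the case $f\equiv 0$ is trivial.

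The first step is a reduction to $f\ge 0$ and to a comparison function. Let $\tilde u$ solve $-\Delta\tilde u=|f|$ in $D$ with $\tilde u=0$ on $\partial D$. By the maximum principle, applied to $\tilde u\pm u$ (understood in the weak or $W^{1,q}$ sense, or by approximating $f$ by smooth functions), we get $|u|\le\tilde u$. It therefore suffices to bound $\int_D\exp((4\pi-\delta)\tilde u)$.

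Next, set $w(x):=\frac{1}{2\pi}\int_D\log\frac{d}{|x-y|}\,|f(y)|\,dy$ with $d=\operatorname{diam}D$. Then $-\Delta w=|f|$ in $D$, and $w\ge0$ on $\overline D$ because $|x-y|\le d$ there. The function $w-\tilde u$ is harmonic in $D$ and nonnegative on $\partial D$, so the maximum principle gives $\tilde u\le w$ in $D$.

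The key step is Jensen's inequality applied to the probability measure $|f(y)|\,dy$:
\[
\exp\big((4\pi-\delta)w(x)\big)=\exp\Big(\int_D \tfrac{4\pi-\delta}{2\pi}\log\tfrac{d}{|x-y|}\,|f(y)|\,dy\Big)\le \int_D\Big(\frac{d}{|x-y|}\Big)^{\frac{4\pi-\delta}{2\pi}}|f(y)|\,dy .
\]
Integrating in $x$ and using Fubini, I then need $\sup_y\int_D (d/|x-y|)^{s}\,dx$ with $s=2-\frac{\delta}{2\pi}<2$. Since $D\subset \B_d(y)$ for every $y\in D$, polar coordinates give
\[
\int_{\B_d(y)}\Big(\frac{d}{|x-y|}\Big)^{s}dx = d^{s}\cdot 2\pi\frac{d^{2-s}}{2-s}=\frac{2\pi d^2}{\delta/(2\pi)}=\frac{4\pi^2}{\delta}d^2 .
\]
This yields exactly the stated constant.

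I expect the only real care to be needed in the regularity bookkeeping, namely justifying the maximum-principle comparisons $|u|\le\tilde u\le w$ when $f$ is merely $L^1$. I would handle this by approximating $f$ in $L^1$ by smooth functions $f_k$ with $\|f_k\|_{L^1}\to\|f\|_{L^1}$. For these the comparisons are classical. Then $u_k\to u$ in $W^{1,q}_0$ and a.e. along a subsequence, and Fatou's lemma passes the final bound to the limit. The Jensen computation itself is routine.
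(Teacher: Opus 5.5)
Your proof is correct and is essentially the original Brezis--Merle argument, which the paper invokes by citation without reproducing it. The steps match: you dominate $|u|$ by the logarithmic potential $\frac{1}{2\pi}\int_D \log\frac{\operatorname{diam} D}{|x-y|}\,|f(y)|\,dy$, apply Jensen's inequality with respect to $|f|\,dy/\|f\|_{L^1(D)}$, and evaluate the explicit polar-coordinate integral, which yields exactly $\frac{4\pi^2}{\delta}(\operatorname{diam} D)^2$. The intermediate function $\tilde u$ is harmless but unnecessary: since $-\Delta(w\pm u)=|f|\pm f\ge 0$ in $D$ and $w\pm u=w\ge 0$ on $\partial D$, you can compare $w\pm u$ with $0$ directly.
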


	Recall that the blow-up point set of \eqref{eq:toda} with respect to $\{ u^n\}_n$ is given by
	\[\mathcal{S}:=\Big\{x\in \Sigma: \exists x_n\rightarrow x \text{ such that } \max_{i=1,2} v^n_i(x_n) \rightarrow +\infty \Big\}. \]
	\begin{lemma} \label{lem:S_i} 
		It holds that $\mathcal{S}=\bigcup^2\limits_{i=1}\widetilde{\mathcal{S}}_i$. Hence, the blow-up point set  $\mathcal{S}$ is a finite set. 
	\end{lemma}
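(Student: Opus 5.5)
We prove the two inclusions $\bigcup_i\widetilde{\mathcal{S}}_i\subseteq\mathcal{S}$ and $\mathcal{S}\subseteq\bigcup_i\widetilde{\mathcal{S}}_i$ separately. Finiteness of $\mathcal{S}$ then follows from the finiteness of each $\widetilde{\mathcal{S}}_i$, already observed above.

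\emph{First inclusion.} Let $x_0\in\widetilde{\mathcal{S}}_i$ and suppose $x_0\notin\mathcal{S}$. Then there are $r>0$ and $C>0$ with $v_i^n\le C$ on $U_r(x_0)$ for all $n$. Hence
\[
\int_{U_s(x_0)}V_ie^{v_i^n}\,dv_g\le C\int_{U_s(x_0)}V_i\,dv_g\to0 \quad\text{as } s\to0,
\]
uniformly in $n$; this uses $V_i\in L^1$, which holds since $V_i\simeq d_g^{2\gamma_i}$ with $\gamma_i>-1$. Therefore $\mu_i(x_0)=0$, a contradiction.

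\emph{Second inclusion.} Let $x_0\notin\bigcup_i\widetilde{\mathcal{S}}_i$. We show that $v_1^n,v_2^n$ are bounded above near $x_0$.

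\emph{Localisation.} Work in isothermal coordinates centred at $x_0$. If $x_0\in\partial\Sigma$, use boundary isothermal coordinates mapping a neighbourhood onto $\B_{r}^+$ with $\partial\Sigma$ sent to $I_r$, and extend $v_i^n$, $V_i$ and the conformal factor evenly across $\{y_2=0\}$. The homogeneous Neumann condition makes the reflected functions weak solutions on $\B_r$ of
\[
-\Delta v_i^n=e^{\phi}\Big(\sum_j a_{ij}V_je^{v_j^n}-\beta u_i^n-a_{ij}\rho_j^n\Big),
\]
with $\phi$ the (reflected) conformal factor. The reflected measure of $V_ie^{v_i^n}$ near $0$ has mass twice the one-sided mass. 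Since $\mu_i(x_0)<\frac{\varrho(x_0)}2\min\{1,1+\gamma_i(x_0)\}$ and $\varrho(x_0)=2\pi$ on the boundary, for $r$ small and $n$ large we get
\[
\int_{\B_r}V_ie^{v_i^n}<2\pi\min\{1,1+\gamma_i\}+\varepsilon \quad\text{(in the reflected picture).}
\]
In the interior case the same bound holds with $4\pi$ replaced by $2\pi$ on the one-sided side, namely mass $<2\pi\min\{1,1+\gamma_i\}$. In both cases the positive part $f_i^n:=2V_ie^{v_i^n}e^\phi$ has small enough $L^1$-norm, strictly below $4\pi\min\{1,1+\gamma_i\}$ after also accounting for the negative cross term. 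The cross term $-V_je^{v_j^n}$ only helps the upper bound.

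\emph{Decomposition.} The $\beta u_i^n$ term is controlled by Lemma~\ref{cor:Green-potential}: the right side of \eqref{eq:toda} has uniformly bounded $L^1$ norm, so $u_i^n$, and hence $v_i^n-\bar v_i^n$, is bounded in $W^{1,q}$ and thus in $L^p$ for every $p$. Split $v_i^n=w^n+z^n$ on $\B_r$, where $-\Delta w^n=f_i^n$ with $w^n=0$ on $\partial\B_r$, and $z^n$ collects the remaining part. Then $z^n$ is bounded above, since it is a subharmonic-type remainder with $L^p$ right side; we control it by the mean value property together with the $L^1$ bound on $(v_i^n)^+$, which follows from Jensen's inequality and $\int V_ie^{v_i^n}\le C$.

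\emph{Integrability and bootstrap.} By Lemma~\ref{Brezis_1991}, $e^{|w^n|}\in L^{s}$ for some $s>1$, with $s$ chosen so that the product with $V_i\in L^{t}$, where $t$ is close to $1/\max\{0,-\gamma_i\}$ if $\gamma_i<0$, remains in $L^{q_0}$ with $q_0>1$. The requirement that the mass be below $4\pi(1+\gamma_i)$ for negative $\gamma_i$ is exactly what allows this Hölder pairing. Hence $V_ie^{v_i^n}$ is bounded in $L^{q_0}(\B_{r/2})$, and elliptic estimates give $(v_i^n)^+$ bounded in $L^\infty(\B_{r/4})$. This shows $x_0\notin\mathcal{S}$.

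The main obstacle is this Hölder bookkeeping near a singular point with $\gamma_i<0$, especially on the boundary: one must check that the reflection factor $2$ and the threshold $\frac{\varrho}{2}\min\{1,1+\gamma_i\}$ combine to give Brezis–Merle exponent $>1$ after pairing with $V_i$.
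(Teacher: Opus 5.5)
Your proof is correct and is essentially the paper's. The inclusion $\bigcup_i\widetilde{\mathcal{S}}_i\subseteq\mathcal{S}$ uses the same local-mass estimate, and the converse uses the same even reflection, Brezis--Merle with positive-part mass below $4\pi\min\{1,1+\gamma_i(x_0)\}$ so that H\"older against $|y|^{2\gamma_i}$ works, and an elliptic bootstrap. The one minor difference is that you treat the $\beta u_i^n$ and constant terms as a subharmonic remainder, bounded via the sub-mean-value property and the $L^1$ bound on $(v_i^n)^+$ (using $w^n\ge 0$); the paper instead puts them into $(\tilde f_i^n)_+$ and controls the harmonic extension of $(\tilde u_i^n)_+$ with the trace inequality and the Poisson formula.

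One slip to fix: drop the ``$+\varepsilon$'' in your local mass bound, which as written only gives positive-part mass below $4\pi\min\{1,1+\gamma_i(x_0)\}+2\varepsilon$. The strict inequality $\mu_i(x_0)<\frac{\varrho(x_0)}{2}\min\{1,1+\gamma_i(x_0)\}$ already gives a reflected mass strictly below $2\pi\min\{1,1+\gamma_i(x_0)\}$, which is exactly what your Brezis--Merle step needs.
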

	\begin{proof}
		{\it \textbf{(i)}.} Firstly, to prove $S\subset\bigcup^2\limits_{i=1}\widetilde{\mathcal{S}}_i$, fix $x_0\notin \bigcup^2\limits_{i=1}\widetilde{\mathcal{S}}_i$. 
		By the existence of interior and boundary isothermal coordinates, as described in \cite{chern1955,yang_wang2021,Esposito2014singular}, there exists a local chart $(y_{x_0}, U(x_0))$ around $x_0$ with $y_{x_0}(x_0)=0$ such that $y_{x_0}(U(x_0))=B^{x_0}$ where
		\[
		B^{x_0}=\begin{cases}
			\mathbb{B}_{r_0} & \text{ if } x_0 \in \intsigma,\\
			\mathbb{B}^+_{r_0} \cup I_{r_0} & \text{ if } x_0 \in \partial\Sigma
		\end{cases}
		\]
		for some sufficiently small radius $r_0>0$.
		In this coordinate, there exists a smooth function $\varphi_{x_0}:B^{x_0}\to \R$ such that $\varphi_{x_0}(0)=0$,  $g= e^{\varphi_{x_0}(y)}( dy_1^2 +dy_2^2)$ and $\Delta_g= e^{-\varphi_{x_0}(y)}\Delta_{\mathbb{R}^2}$.
		Furthermore, when $x_0 \in \partial\Sigma$, for any $x \in \partial\Sigma \cap U(x_0)$, $y_{x_0}(x) \in \{y \in \R^2 : y_2=0\}$ and 
		\[
		(y_{x_0})_*(\nu_g(x))=-e^{-\frac{1}{2}\varphi_{x_0}(y)} \frac{\partial}{\partial y_2}.
		\]
		\par

		Now we define $\tilde{u}^n_i$ on $\B_{r_0}$ as follows:
		\[
		\tilde{u}^n_i(y):= \begin{cases}
			(u^n_i\circ y^{-1}_{x_0})(y_1,y_2) & \quad \text{ if } x_0 \in \intsigma,\\
			(u^n_i\circ y^{-1}_{x_0})(y_1,|y_2|) & \quad \text{ if } x_0 \in \partial\Sigma.
		\end{cases}
		\]
		The functions $\tilde{v}_i^n$ and $\tilde{V}_i$ are defined in the same way by replacing $u_i^n$ with $v_i^n$ and $V_i$, respectively. We also define $\tilde{\varphi}_{x_0}$ by
		\[
		\tilde{\varphi}_{x_0}(y):= \begin{cases}
			\varphi(y_1,y_2) & \quad \text{ if } x_0 \in \intsigma,\\
			\varphi(y_1,|y_2|) & \quad \text{ if } x_0 \in \partial\Sigma.
		\end{cases}
		\]
		By the homogeneous Neumann boundary condition in the case $x_0\in\partial\Sigma$, the even extension across ${y_2=0}$ satisfies the reflected equation without producing any additional boundary term. Consequently, $\tilde{u}^n_i$ ($i=1,2$) satisfy the following local system
		\begin{equation*}
			-\Delta_{\mathbb{R}^2}\tilde{u}^n_i(y)=\tilde{f}^n_i(y),\quad y\in\mathbb{B}_{r_0}, 
		\end{equation*}
		in the distributional sense, where $\tilde{f}^n_i(y)$ is defined by 
		\[
		\tilde{f}^n_i(y):=-\beta e^{\tilde{\varphi}_{x_0}(y)} \tilde{u}^n_i(y)+ e^{\tilde{\varphi}_{x_0}(y)} \sum_{j=1}^2  a_{ij} \left(\tilde{V}_j(y)e^{\tilde{v}^n_j(y)} - \rho^n_j\right).
		\]
		For any real-valued function $f$, we denote its positive part by $f_+:=\max\{0,\ f\}$.

Fix $i \in \{1,2\}$. Since $-\Delta_g u^n_i+\beta u^n_i$ is uniformly bounded in $L^1(\Sigma)$ and  $-\beta$ is not an eigenvalue   of $-\Delta_g$ restricted to the zero-mean Neumann space, the standard potential estimate yields that, for any fixed $q \in (1,2)$,   
\begin{equation*}
	\|\nabla u^n_i\|_{L^q(\Si)}+ \|u^n_i\|_{L^q(\Si)}\leq\widetilde{C},
\end{equation*}
uniformly in $n$.

Since $x_0 \notin \widetilde{\mathcal{S}}_i$, we have $\mu_i(x_0)<{\frac {\varrho(x_0)}{2}}\min\{ 1,\,1+\gamma_i(x_0)\}$. 
Hence, there exist constants $r_1\in(0,\,r_0)$ and $c_0>0$ such that, for all sufficiently large $n$ and every $r\in (0,r_1)$,
\begin{equation*}
	\int_{\B_{r}}  \tilde{V}_ie^{\tilde{\varphi}_{x_0}+\tilde{v}^n_{i}}\leq c_0<2\pi\min\{1, 1+\gamma_i(x_0)\}\le2\pi.
\end{equation*}	
By the above $L^q$-estimate and H\"older inequality, there exists $c_1>0$ such that
\begin{equation*}
	\int_{\B_r}\left| -\beta e^{\varphi_{x_0}(y)}\tilde{u}^n_i(y) \right| dy\leq |\beta| \|u^n_i\|_{L^q(\Sigma)}\left( \int_{\B_r}  e^{\varphi_{x_0}(y)} dy\right)^{\frac {q-1} q}\leq c_1 r^{\frac{2(q-1)}{q}}, \quad r \in (0,r_1).
\end{equation*} 
Applying the above estimates to the definition of $\tilde{f}_i^n$ and taking $q=\frac3 2$, there exists a constant $c_2>0$ depending only on $(\Sigma,g)$ such that 
\begin{equation*}
	\int_{\B_r} |(\tilde{f}^n_i)_+|\leq 2c_0+ c_1 r^{\frac 2 3 } +c_2\sup_{n\in\mathbb{N},i\in\{1,2\}}|\rho_i^n|r^2, \quad \quad r \in (0,r_1).
\end{equation*}

We fix $A>1$ such that 
\[
\frac{1}{\min\{1,1+\gamma_i(x_0)\}}<A<\frac{2\pi}{c_0}
\]
and choose $\delta\in (0,4\pi)$ sufficiently small so that $1+\delta<A$ and $4\pi-\delta>2c_0(1+\delta)A$.
We define
\begin{equation*}
	r(\delta):=\min\left\{r_1,\sqrt{\frac {c_0\delta} {c_2\sup_{n\in\mathbb{N}, i\in\{1,2\}}|\rho_i^n|}}, \left(\frac{ c_0\delta}{c_1} \right)^{\frac 3 2 }\right\}.
\end{equation*}		   
Then we have $\|(\tilde{f}^n_i)_+\|_{L^1(\mathbb{B}(r(\delta)))} \le 2c_0(1+\delta)$.

We now decompose $\tilde{u}^n_i$ on the fixed ball $\mathbb{B}_{r_\delta}$. Let
\begin{equation*}
	\tilde{w}^n_i:=\tilde{u}^n_{i1}+\tilde{u}^n_{i2},
\end{equation*}
where $\tilde{u}^n_{i1}$ is the solution of
\begin{equation}\label{S_i:E-D-F}
	\left\{
	\begin{array}{ll}
		-\Delta_{\mathbb{R}^2}\tilde{u}^n_{i1}=(\tilde{f}^n_i)_{+}&\text { in }\mathbb{B}_{r(\delta)},\\
		\tilde{u}^n_{i1} =0& \text{ on }\partial\mathbb{B}_{r(\delta)},
	\end{array}\right.
\end{equation}
and $\tilde{u}^n_{i2}$ is the solution of
\begin{equation*}
	\begin{cases}
		-\Delta_{\mathbb{R}^2}\tilde{u}^n_{i2}=0\quad&\text{ in }\mathbb{B}_{r(\delta)},\\
		\tilde{u}^n_{i2}=(\tilde{u}^n_i)_+\quad&\text{ on }\partial\mathbb{B}_{r(\delta)}.	
	\end{cases}
\end{equation*}
By the comparison principle, we deduce that
\begin{equation*}
	\tilde{w}^n_{i}\geq\tilde{u}^n_i\text{ and }\tilde{u}^n_{i1}, \tilde{u}^n_{i2} \geq0\text{ in }\mathbb{B}_{r(\delta)}.
\end{equation*}
On the other hand, by the uniform $W^{1,q}$-estimate of $\tilde{u}_i^n$ and the trace inequality, we have $\|(\tilde{u}_i^n)_+\|_{L^1(\partial \B(r(\delta)))} \le C$.
Since $\tilde{u}^n_{i2}$ is harmonic in $\B_{r(\delta)}$ with boundary data $(\tilde{u}_i^n)_+$, the Poisson representation formula yields 
\begin{equation}\label{S_i-0}
	\|\tilde{u}_{i2}^n\|_{L^\infty(B_{r(\delta)/2})} \le C.
\end{equation}

We next estimate $\tilde{u}^n_{i1}$.         
Applying Lemma~\ref{Brezis_1991} to \eqref{S_i:E-D-F}, there exists some constant $C>0$ such that, 
\begin{equation*}
	\int_{\B_{r(\delta)}} e^{A\tilde{u}^n_{i1} } \le \int_{\B_{r(\delta)}} \exp \left(\frac{(4 \pi-\delta) \tilde{u}^n_{i1}}{\|(\tilde{f}^n_{i})_+\|_{L^1(\B_r)}}\right) \leq C. 
\end{equation*}
We now improve the integrability of $(\tilde{f}_i^n)_+$.
Since $\tilde{v}_i^n \le \tilde{u}_i^n+C \le \tilde{u}_{i1}^n+C$ in $\B_{r(\delta)/2}$,
by the local behavior of $\widetilde{V}_i$ and H\"older's inequality, there exists $p>1$ sufficiently close to 1 such that 
\[
\|(\tilde{f}^n_i)_+\|_{L^p(\B_{r(\delta)/2})} \le C.
\]
Here, $p>1$ can be chosen by the choice of $A$.
Therefore, by the standard elliptic estimate and Sobolev's inequality, we have
\begin{equation}\label{S_i-01}
	\|\tilde{u}^n_{i1}\|_{L^{\infty}(\mathbb{B}_{r(\delta)/4})}\leq C.
\end{equation}
\par

Combining \eqref{S_i-0} and \eqref{S_i-01}, we obtain
\begin{equation*}
	\tilde{u}^n_i(y)\leq \tilde{w}^n_i(y)= \tilde{u}^n_{i1}(y)+\tilde{u}^n_{i2}(y) \leq C,\quad\text{for all } y\in \B_{r(\delta)/4}.
\end{equation*}
Then, $u_i^n$ is uniformly bounded from above in a neighborhood of $x_0$.
Since we fixed $i \in \{1,2\}$ arbitrarily, both $u_1^n$ and $u_2^n$ are uniformly bounded from above near $x_0$.
Consequently, we have $x_0 \notin \mathcal{S}$, which implies that $\mathcal{S} \subset \bigcup^2\limits_{i=1}\widetilde{\mathcal{S}}_i$.

{\it \textbf{(ii)}.}  It suffices to prove that $\bigcup^2\limits_{i=1}\widetilde{\mathcal{S}}_i\subset\mathcal{S}$. For any $x_0\notin\mathcal{S}=\mathcal{S}_1\cup\mathcal{S}_2$,  there exists a sufficiently small neighborhood $U_{r_0}(x_0)$ and some constant $C>0$, such that
\begin{equation*}
	\max\limits_{U_{r_0}(x_0)} v^n_i \leq C.
\end{equation*}
Then, for small  $r\in (0, r_0)$, it follows that 
\begin{equation*}
	\begin{aligned}
		\int_{U_{r}(x_0)} V_i e^{v^n_i} dv_g\leq
		\begin{cases}
			C r^2,\ &\text{if}\ x_0\not\in\{p_{il}\}_{i=1,2;\,l=1,\cdots,\iota},\\
			Cr^{2(1+\gamma_{il})},\ &\text{if}\ x_0=p_{il},
		\end{cases} 
	\end{aligned}
\end{equation*}
for some constant $C>0$. This implies  $\mu_i(x_0)=0$, which means that  $x_0\notin\bigcup^2\limits_{i=1}{\widetilde{\mathcal{S}}}_i$. The lemma is concluded. 
\end{proof}
\par

By the convergence in the sense of measures stated in \eqref{def:mu_i}, together with the definition of the local limit masses $\sigma_i(x)$ in \eqref{def:masses}, we deduce that
\begin{equation}\label{W-C-r}
V_i e^{v_i^n}dv_g=\rho^n_i{\frac{V_ie^{u^n_i}}{\int_{\Sigma}V_ie^{u^n_i}dv_g}}dv_g\rightharpoonup\sum\limits_{x\in\mathcal{S}_i}\sigma_i(x)\delta_x+\mathrm{r}_idv_g\quad\text{ in sense of measures},
\end{equation}
where $\mathrm{r}_i\in L^1(\Sigma)\cap L^{\tilde{q}}(\Sigma)$ for some $\tilde{q}>1$ since $\min\limits_{1\leq i\leq2; 1\leq l\leq\iota}\{\gamma_{il}\}>-1$.
We also have $\mathrm{r}_i \ge 0$.
\par

\begin{lemma} \label{W-C-B}  
Let $G^g_x=G^g(\cdot,x)$ be the Green function given by \eqref{Green-F-G} and for $j=1,2$, $\sigma_j(x)$ be the local limit mass defined in \eqref{def:masses}. For any $i=1,2$, up to a subsequence,
\begin{equation*}
u^n_i=v^n_i-\overline{v}^n_i\to\sum_{j=1}^2 \sum_{x\in \cS_j} a_{ij}\sigma_j(x)G^g_x +s_i
\end{equation*} 
in $L^{\infty}_{loc}(\Sigma\setminus\mathcal{S})$ and weakly in $W^{1,q}(\Sigma)$ for any $q\in (1,2)$
where $s_i$ solves the following equations in the weak sense, with test functions in $C^\infty(\Sigma)$:
\begin{equation}\label{W-C-B-2} 
\begin{cases}
	-\Delta_g s_i+\beta s_i= \sum^2\limits_{j=1} a_{ij}( \mathrm{r}_j+ \sum\limits_{x\in\cS_j} \sigma_j(x) - \rho_j) & \text{ in } \intsigma,\\
	\partial_{\nu_g} s_i=0 & \text{ on } \partial \Sigma,\\
	\int_{\Sigma} s_i dv_g =0,
\end{cases}
\end{equation}
with $\mathrm{r}_i$ defined in \eqref{W-C-r} and $e^{s_i}\in L^p(\Sigma)$ for any $p \ge 1$.
\end{lemma}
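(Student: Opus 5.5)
The plan is to use the Green representation of Lemma~\ref{cor:Green-potential}. Since $\int_\Sigma u^n_i\,dv_g=0$ and $v^n_i$ differs from $u^n_i$ by a constant, we have $u^n_i=v^n_i-\overline{v}^n_i$. Set
\[
f^n_i:=\sum_{j=1}^2 a_{ij}\bigl(V_je^{v^n_j}-\rho^n_j\bigr).
\]
This function has zero mean, since $\int_\Sigma V_je^{v^n_j}dv_g=\rho^n_j$ and $|\Sigma|=1$, and it is bounded in $L^1(\Sigma)$. By the uniqueness part of Lemma~\ref{cor:Green-potential},
\[
u^n_i(x)=\int_\Sigma G^g(x,p)f^n_i(p)\,dv_g(p),
\]
so $\|u^n_i\|_{W^{1,q}}\le C_q$ for every $q\in(1,2)$. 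Up to a subsequence, $u^n_i\rightharpoonup u_i$ weakly in $W^{1,q}$ and strongly in $L^q$.

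To identify the limit, I pass to the limit in the weak formulation with test functions $\psi\in C^\infty(\Sigma)$. By \eqref{W-C-r}, $f^n_i\,dv_g\rightharpoonup \mu^\infty_i:=\sum_j a_{ij}\bigl(\sum_{x\in\cS_j}\sigma_j(x)\delta_x+\mathrm r_j-\rho_j\bigr)$, and $\mu^\infty_i$ has zero total mass. Hence $u_i$ solves $-\Delta_g u_i+\beta u_i=\mu^\infty_i$ weakly with Neumann condition and zero mean. By linearity and Lemma~\ref{lem:Green-properties}, the function
\[
s_i:=u_i-\sum_j\sum_{x\in\cS_j}a_{ij}\sigma_j(x)G^g_x
\]
satisfies \eqref{W-C-B-2}. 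Indeed, each $G^g_x$ contributes $\delta_x-1$, and the constants $-1$ are absorbed into the right-hand side of \eqref{W-C-B-2}, which reads $\sum_j a_{ij}(\mathrm r_j+\sum_x\sigma_j(x)-\rho_j)$. Uniqueness in $W^{1,q}$, again from Lemma~\ref{cor:Green-potential}, pins down $s_i$.

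For $e^{s_i}\in L^p$ for every $p$, note that $s_i$ is the Green potential of $F_i:=\sum_j a_{ij}(\mathrm r_j-\int\mathrm r_j)$, which lies in $L^{\tilde q}$ with $\tilde q>1$. Using $|G^g(x,p)|\le C(1+|\log d_g(x,p)|)$, Hölder's inequality gives $\sup_x\int|G^g(x,\cdot)|^{\tilde q'}<\infty$, so $s_i\in L^\infty(\Sigma)$. Alternatively, localize and apply Lemma~\ref{Brezis_1991} after splitting $F_i$ into a part of small $L^1$ norm plus a bounded part.

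The main obstacle is the $L^\infty_{loc}(\Sigma\setminus\cS)$ convergence. Fix a compact $K\subset\Sigma\setminus\cS$ and $\delta>0$ with $d_g(K,\cS)>2\delta$. By the proof of Lemma~\ref{lem:S_i}, covering $K_\delta:=\{d_g(\cdot,K)<\delta\}$ by finitely many charts where $v^n_j\le C$, the measures $V_je^{v^n_j}$ are bounded in $L^{\tilde q}(K_\delta)$ for some $\tilde q>1$, and the singular weights $\gamma>-1$ are harmless there. Split the representation of $u^n_i(x)$, $x\in K$, into integrals over $K_\delta$ and over $\Sigma\setminus K_\delta$. On $\Sigma\setminus K_\delta$ the kernel $G^g(x,p)$ is continuous in $(x,p)\in K\times(\Sigma\setminus K_\delta)$, by Lemma~\ref{lem:Green-properties} (uniform gradient bounds off the diagonal). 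Weak-$*$ convergence of measures then gives uniform convergence on $K$, after inserting a cutoff to handle the boundary of $K_\delta$.

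On $K_\delta$, the functions $f^n_i$ are bounded in $L^{\tilde q}$, and the kernels $G^g(x,\cdot)$ are uniformly bounded in $L^{\tilde q'}$ and equicontinuous in $x$ in that norm, again by the log and gradient estimates. The resulting family is equicontinuous and bounded on $K$, so by Arzelà–Ascoli it converges uniformly along a subsequence. The limit must coincide with the $L^q$ limit $u_i$, which gives the claim.
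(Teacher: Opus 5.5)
Your proposal is correct. Most of it follows the paper:
\begin{itemize}
\item the uniform $W^{1,q}$ bound from Lemma~\ref{cor:Green-potential};
\item the passage to the limit in the weak formulation;
\item the definition of $s_i$ by subtracting $\sum_j\sum_{x\in\mathcal S_j}a_{ij}\sigma_j(x)G^g_x$;
\item your fallback argument for $e^{s_i}\in L^p$, which splits the $L^1$ datum into a part of small $L^1$ norm plus a bounded part and applies Lemma~\ref{Brezis_1991}. This is exactly the paper's appeal to Remark~2 of \cite{brezis_uniform_1991}.
\end{itemize}

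You genuinely differ in the $L^\infty_{loc}(\Sigma\setminus\mathcal S)$ convergence. The paper argues locally. Off $\mathcal S$ the nonlinearities are locally bounded (in $L^{\tilde q}$, because of the weights), so local elliptic estimates give $W^{2,\tilde q}_{loc}$ bounds and compactness in $C^0$. This requires local regularity up to the Neumann boundary. You instead stay with the global kernel, in two parts:
\begin{itemize}
\item The far part converges uniformly, because $\{G^g(x,\cdot)\eta\}_{x\in K}$ is a compact family in $C(\Sigma)$ tested against weak-$*$ convergent measures of bounded mass.
\item The near part is equicontinuous by the $L^{\tilde q}$--$L^{\tilde q'}$ pairing, since $x\mapsto G^g(x,\cdot)$ is continuous into $L^{\tilde q'}$. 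This does follow from the logarithmic and $\nabla_{g,x}$ bounds.
\end{itemize}
Your route treats interior and boundary points of $K$ uniformly and uses only Lemma~\ref{lem:Green-properties}. The paper's route is shorter and gives convergence in a H\"older norm. One detail is missing: Lemma~\ref{lem:Green-properties} controls only $\nabla_{g,x}G^g$. So the continuity in $p$ needed for $G^g(x,\cdot)\eta\in C(\Sigma)$ must be justified separately. It follows from the symmetry $G^g(x,p)=G^g(p,x)$, which comes from self-adjointness of the Neumann realization of $-\Delta_g+\beta$ together with the zero-mean normalization.

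You should drop your first argument for $e^{s_i}\in L^p$, because it assumes $\mathrm r_j\in L^{\tilde q}(\Sigma)$ globally. \eqref{W-C-r} asserts this, but the stated reason ($\gamma_{il}>-1$) only controls $\mathrm r_j$ where $v^n_j$ is bounded above, i.e.\ away from $\mathcal S$. At this stage one only knows $\mathrm r_j\in L^1(\Sigma)\cap L^{\tilde q}_{loc}(\Sigma\setminus\mathcal S)$. Near $x\in\mathcal S$ one has $\mathrm r_j=\hat V_je^{s_j}$, with the weight $\hat V_j\simeq d_g(\cdot,x)^{2\gamma_j(x)-2\sum_k a_{jk}\sigma_k(x)/\varrho(x)}$ of Remark~\ref{Bdd-formula-ri}. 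Global $L^p$ integrability of $\mathrm r_j$ for some $p>1$ is obtained only in Lemma~\ref{lemma:w2p}, and that proof uses the exponential integrability of $s_i$ established in the present lemma. Hence concluding $s_i\in L^\infty(\Sigma)$ here is circular. Keep the Brezis--Merle route, which needs only $\mathrm r_j\in L^1$; in boundary charts use the even reflection, and note that $\beta s_i\in L^r$ for all $r<\infty$ is harmless.
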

\begin{proof}
Since $-\Delta_gu^n_i+\beta u^n_i$ is uniformly bounded in $L^1(\Sigma)$, Lemma \ref{cor:Green-potential} yields that, for any $q \in (1,2)$,
\begin{equation}\label{W-C-B-4}
\|\nabla_gu^n_i\|_{L^q(\Sigma)}+\|u^n_i\|_{L^q(\Sigma)}\leq C
\end{equation}
uniformly in $n$.
Thus, after taking a subsequence if necessary, $u^n_i$ converges to some $u_i$ weakly in $W^{1,q}(\Sigma)$ for any $q \in (1,2)$.
Moreover, since $V_ie^{v_i^n}$ is uniformly bounded in $L^\infty_{loc}(\Sigma \setminus \mathcal{S})$, 
using the standard elliptic estimates, we obtain that $u_i^n$ is uniformly bounded in $W^{2,q}_{loc}(\Sigma \setminus \mathcal{S})$, and this implies that $u_i^n \to u_i$ in $L^\infty_{loc}(\Sigma \setminus \mathcal{S})$.

\medskip

Now define $s_i:=u_i-\sum^2_{j=1}a_{ij}\sum\limits_{x\in\mathcal{S}_j}\sigma_j(x)G^g_x$. Combining \eqref{eq:toda} and \eqref{W-C-r}, $s_i$ satisfies the following equation, in the weak sense:
\begin{equation*}
\begin{aligned}
	\begin{cases}
		-\Delta_g s_i=-\beta s_i+\sum^2_{j=1} a_{ij}\big( \mathrm{r}_j+ \sum\limits_{x\in\cS_j} \sigma_j(x) - \rho_j\big)\quad&\text{ in }\intsigma,\\
		\partial_{\nu_g}s_i=0\quad&\text{on}\ \partial\Sigma,\\
		\int_{\Sigma}s_i dv_g=0,
	\end{cases}
\end{aligned}
\end{equation*} 
where $s_i\in W^{1,q}(\Sigma)$.		
Moreover, since $-\Delta_g s_i \in L^1(\Sigma)$ and $s_i \in W^{1,q}(\Sigma)$ for any $q \in (1,2)$, applying Remark 2 in \cite{brezis_uniform_1991}, we obtain $e^{s_i} \in L^p(\Sigma)$ for any $p \ge 1$.	 
\end{proof}
\par




\begin{lemma}\label{ri}	
For any $i=1,2$, $\mathrm{r}_i\equiv0$ holds if and only if $\overline{v}^n_i=-\log \int_\Sigma V_i e^{u^n_i} dv_g +\log \rho_i^n\rightarrow -\infty$ as $n\to+\infty$. Moreover, if $\mathrm{r}_i\equiv0$, then $\rho_i=\sum\limits_{x\in\mathcal{S}_i}\sigma_i(x)$.
\end{lemma}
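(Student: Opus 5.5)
We argue in two directions, using the decomposition $v^n_i=u^n_i+\overline{v}^n_i$ (since $\int_\Sigma u^n_i=0$, $\overline{v}^n_i=-\log\int_\Sigma V_ie^{u^n_i}dv_g+\log\rho^n_i$) together with the convergence of $u^n_i$ from Lemma \ref{W-C-B}.

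\emph{If $\overline{v}^n_i\to-\infty$, then $\mathrm{r}_i\equiv0$.} Fix a compact $K\subset\Sigma\setminus\mathcal{S}$. By Lemma \ref{W-C-B}, $u^n_i$ converges in $L^\infty(K)$, so $\sup_n\|u^n_i\|_{L^\infty(K)}<\infty$. Since $V_i\in L^{\tilde q}$ near singular points ($\gamma_{il}>-1$), we get
\[
\int_K V_ie^{v^n_i}dv_g\le e^{\overline{v}^n_i}\,C_K\to0 .
\]
Hence the limit measure $\mu_i$ vanishes on $\Sigma\setminus\mathcal{S}$. Since $\mathcal{S}$ is finite (Lemma \ref{lem:S_i}) and $\mathrm{r}_i\,dv_g$ is absolutely continuous, $\mathrm{r}_i=0$ a.e. on $\Sigma$. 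Testing \eqref{W-C-r} with $\phi\equiv1$ then gives $\rho_i=\lim\rho^n_i=\mu_i(\Sigma)=\sum_{x\in\mathcal{S}_i}\sigma_i(x)$, which is the second assertion.

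\emph{If $\overline{v}^n_i\not\to-\infty$, then $\mathrm{r}_i\not\equiv0$.} Passing to a subsequence, $\overline{v}^n_i\ge -C$. We also have an upper bound. Pick a small ball $U\subset\Sigma\setminus(\mathcal{S}\cup\mathcal{Q})$ where $u^n_i\ge -C$ and $V_i\ge c>0$. Then
\[
\rho^n_i\ge\int_U V_ie^{v^n_i}\ge c'e^{\overline{v}^n_i},
\]
so $\overline{v}^n_i$ is bounded and converges along a subsequence to some $c_i\in\mathbb{R}$. On each compact $K\subset\Sigma\setminus\mathcal{S}$ we then have $V_ie^{v^n_i}\to V_ie^{u_i+c_i}$ uniformly away from $\mathcal{Q}$, with the singular points handled by dominated convergence via the $L^\infty(K)$ bound. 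Comparing with \eqref{W-C-r} on $\Sigma\setminus\mathcal{S}$ gives $\mathrm{r}_i=V_ie^{u_i+c_i}>0$ a.e., so $\mathrm{r}_i\not\equiv0$. This contradicts $\mathrm{r}_i\equiv0$ and proves the equivalence.

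The main obstacle is pinning down $\mathrm{r}_i$ on $\Sigma\setminus\mathcal{S}$. This needs two facts: that $\mathcal{S}_i$ as used in \eqref{W-C-r} captures all atoms, and that measure convergence restricted to open sets avoiding the finite set $\mathcal{S}$ is legitimate. Both follow from Lemma \ref{lem:S_i} and the local uniform bounds.
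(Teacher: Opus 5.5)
Your proof is correct and is essentially the standard argument the paper invokes by citing \cite[Lemma 2.2]{Battaglia2015a_note}. You write $v_i^n=u_i^n+\overline{v}_i^n$ and use the locally uniform convergence of $u_i^n$ on $\Sigma\setminus\mathcal{S}$ from Lemma \ref{W-C-B} to identify $\mathrm{r}_i$ as either $0$ or $V_ie^{u_i+c_i}$, which is the formula of Remark \ref{Bdd-formula-ri}; for the mass identity you test \eqref{W-C-r} with $\phi\equiv1$, exactly as the paper does.
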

\begin{proof}
First, the equivalence of these two conditions follows from the same argument as in the case $\partial\Sigma=\emptyset$; see \cite[Lemma 2.2]{Battaglia2015a_note}.
Moreover, by \eqref{W-C-r}, we have $\rho_i=\lim_{n\to\infty} \rho_i^n=\sum\limits_{x\in\mathcal{S}_i}\sigma_i(x)$ when $\mathrm{r}_i\equiv0$.
\end{proof}
\par

\begin{remark}\label{Bdd-formula-ri}


By Lemma \ref{W-C-B} and Lemma \ref{ri}, we have $\mathrm{r}_i=\hat{V}_ie^{s_i}$, where
\[
	\hat{V}_i:=V_i\exp\Big(\lim\limits_{n\to\infty}\overline{v}^n_i+\sum^2_{j=1}\sum_{x\in\mathcal{S}_j}a_{ij}\sigma_j(x)G^g_x \Big).
\]
Moreover, if $\mathrm{r}_i\not\equiv0$, then $\hat{V}_i\simeq d_g(\cdot,x)^{2\gamma_i(x)-{\frac{\sum^2_{j=1}2a_{ij}\sigma_j(x)}{\varrho(x)}}}$ around $x\in\mathcal{S}_i$.
\end{remark}
\par

We now present a concentration-compactness alternative theorem, originally established in \cite{Lucia2002Vortex} for domains in $ \R^2 $, and extended in \cite{battaglia2013moser} to the setting of closed surfaces.
\begin{theorem}\label{thm:compactness_concentration} One of the following statements alternatively occurs: 
\begin{itemize}
	\item [a.] (Compactness) $\{v^n\}$ is uniformly bounded in $H^1(\Sigma)\times H^1(\Sigma)$ and $L^\infty (\Sigma)\times L^\infty(\Sigma)$. 
	\item [b.](Concentration) The blow-up set $\cS=\cS_1\cup \cS_2$ is nonempty and finite. For each $i=1,2$, either $v^n_i$ is bounded in $L_{loc}^{\infty}(\Sigma\setminus \cS_i)$ or $v^n_i\rightarrow -\infty$ locally uniformly in $\Sigma\setminus \cS_i.$ \\
	In addition, if $\cS_i \setminus (\cS_1\cap \cS_2) \neq \emptyset$ for some $i=1,2$, then $v^n_i \rightarrow -\infty$ locally uniformly in $\Sigma\setminus \cS$. 
\end{itemize} 
Moreover, 
$
\mu_i =\mathrm{r}_i dv_g + \sum_{x \in S_i} \sigma_i(x) \delta_x
$
with $\mathrm{r}_i$ defined in \eqref{W-C-r} and $\sigma_i(x) \geq \frac{\varrho(x)}{2} \min\{1,1+\gamma_i(x)\}$ for any $ x \in \cS_i, i = 1, 2$.
\end{theorem}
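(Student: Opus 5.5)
The plan is to split according to whether the blow-up set $\cS$ is empty, and in the concentration case to reduce everything to Lemma~\ref{lem:S_i}, Lemma~\ref{W-C-B}, Lemma~\ref{ri} and a Harnack-type argument applied in the local reflected coordinates.

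\emph{Case $\cS=\emptyset$.} By Lemma~\ref{lem:S_i}, both $v^n_1,v^n_2$ are bounded from above on all of $\Sigma$; I will use the isothermal charts with even reflection and cover $\Sigma$ by finitely many of them. Then $V_ie^{v^n_i}$ is bounded in $L^{p}$ for some $p>1$, since $\gamma_{il}>-1$. Lemma~\ref{cor:Green-potential} together with elliptic $W^{2,p}$ estimates bounds $u^n_i$ in $L^\infty$. It remains to show that $\overline{v}^n_i$ stays bounded. The upper bound holds because $v^n_i\le C$. For the lower bound, $\int V_ie^{v^n_i}=\rho_i^n\to\rho_i>0$ while $u^n_i$ is bounded, so $e^{\overline{v}^n_i}\int V_ie^{u^n_i}=\rho^n_i$ forces $\overline{v}^n_i$ to be bounded below. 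Hence $v^n$ is bounded in $L^\infty$. Testing the equation against $u^n_i$ then bounds the $H^1$ norm, using that $-\beta$ is not an eigenvalue only through the already obtained $L^\infty$ bound. This is alternative a.

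\emph{Case $\cS\neq\emptyset$.} Finiteness is Lemma~\ref{lem:S_i}, and the lower bound $\sigma_i(x)\ge\frac{\varrho(x)}2\min\{1,1+\gamma_i(x)\}$ for $x\in\cS_i$ is exactly the inclusion $\cS_i\subset\widetilde\cS_i$ proved there. The step (i) argument is applied componentwise and yields $\cS_i=\widetilde\cS_i$. The decomposition of $\mu_i$ is \eqref{W-C-r}.

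For the dichotomy on $\Sigma\setminus\cS_i$, I will write $v^n_i=u^n_i+\overline{v}^n_i$. By Lemma~\ref{W-C-B}, $u^n_i$ converges in $L^\infty_{loc}(\Sigma\setminus\cS)$. Near points of $\cS_j\setminus\cS_i$, $u^n_i$ is only bounded from above, because the $a_{ij}<0$ contribution pushes it up at most by a bounded amount. Indeed, $u^n_i=\sum_j a_{ij}\int G\,V_je^{v^n_j}+\dots$, and the term with $a_{ij}=-1$ is bounded below by the lower bound on $G$. Near such points, however, $v^n_i$ is bounded above by the definition of $\cS_i$. The plan is then: if $\overline{v}^n_i$ is bounded, $v^n_i$ is bounded in $L^\infty_{loc}(\Sigma\setminus\cS_i)$; otherwise $\overline{v}^n_i\to-\infty$. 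The case $\overline{v}^n_i\to+\infty$ is excluded because it would make the mass $\int V_ie^{v^n_i}$ blow up on a compact set away from $\cS$. To handle $\cS_j\setminus\cS_i$ uniformly, I will apply a local Harnack inequality (in reflected coordinates) to $\tilde u^n_i$ minus its Green part. On a small ball around $x\in\cS_j\setminus\cS_i$, $-\Delta \tilde v^n_i$ equals $2\tilde V_ie^{\tilde v^n_i}$ (bounded), plus $-\tilde V_je^{\tilde v^n_j}$ (a nonpositive measure), plus lower-order terms. Writing $\tilde v^n_i=h^n+w^n$, with $h^n$ harmonic and $w^n$ the Newtonian potential, Brezis--Merle controls $w^n$, and the harmonic part satisfies Harnack, so $\sup-\inf$ is controlled and the dichotomy propagates across the point.

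\emph{Last claim and main obstacle.} For the final assertion, suppose $x\in\cS_i\setminus\cS_j$ ($j\ne i$). I will show $\mathrm{r}_i\equiv0$ via Lemma~\ref{ri}, i.e.\ $\overline{v}^n_i\to-\infty$. If not, $v^n_i$ stays bounded near $x$ on annuli. Following Lucia--Nolasco and \cite{battaglia2013moser}, near $x$ the component $v^n_j$ is bounded, so $v^n_i$ locally solves a scalar Liouville equation with bounded perturbation and locally bounded oscillation on $\partial U_r(x)$. The scalar sup$+$inf / Brezis--Merle--Li--Shafrir alternative then gives $v^n_i\to-\infty$ uniformly on compacts of $U_r(x)\setminus\{x\}$, and this contradicts boundedness. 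The boundary case is handled by reflection. Once $v^n_i\to-\infty$ near $x$, the dichotomy forces $v^n_i\to-\infty$ on all of $\Sigma\setminus\cS$. I expect this step — the local scalar concentration alternative at boundary or singular points with the oscillation estimate in reflected coordinates, where the reflected weight $|y|^{2\gamma}$ and the $\beta$ term must be treated as harmless perturbations — to be the main obstacle.
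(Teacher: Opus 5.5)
Your treatment of the case $\cS=\emptyset$, of finiteness and the lower bound on $\sigma_i(x)$ (the componentwise form of Lemma~\ref{lem:S_i}), of the decomposition of $\mu_i$, and of the final assertion follows the paper's proof essentially step by step. For the final assertion you assume $\overline v^n_i$ bounded, reduce near $x\in\cS_i\setminus\cS_j$ to a scalar Liouville equation, apply the Brezis--Merle alternative, and conclude with Lemmas~\ref{ri} and \ref{W-C-B}.

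The step that fails is the one you add to carry the \emph{bounded} alternative across points $x\in\cS_j\setminus\cS_i$. In your splitting $\tilde v^n_i=h^n+w^n$, the source contains $-e^{\tilde\varphi_x}\tilde V_je^{\tilde v^n_j}$. This term concentrates at $x$ with positive mass: $\sigma_j(x)>0$, doubled after reflection. Brezis--Merle only gives exponential integrability of $w^n$, not an $L^\infty$ bound. In fact $w^n$ behaves like $\frac{2\sigma_j(x)}{\varrho(x)}\log|y|$ near $0$, so no Harnack argument controls $\sup-\inf$ on a neighbourhood of $x$.

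This cannot be repaired, because the target is false. If $\mathrm r_i\not\equiv0$, Lemma~\ref{W-C-B} gives $v^n_i\to-\sigma_j(x)G^g_x+s_i+\lim\overline v^n_i$ locally uniformly off $\cS$, since $a_{ij}=-1$. As $G^g_x\sim-\frac{2}{\varrho(x)}\log d_g(\cdot,x)$, the limit behaves like $\frac{2\sigma_j(x)}{\varrho(x)}\log d_g(\cdot,x)$, hence $\inf_{U_r(x)}v^n_i\to-\infty$. Take the partial blow-up configuration $\cS_i=\emptyset$, $\cS_j=\{x\}$, $(\sigma_i,\sigma_j)(x)=(0,\varrho(x))$, $\mathrm r_i\not\equiv0$, which nothing in Section~\ref{sec:2} excludes. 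There $v^n_i$ is neither bounded in $L^\infty(\Sigma)=L^\infty_{loc}(\Sigma\setminus\cS_i)$ nor tends to $-\infty$. Your own remark that $u^n_i$ is only bounded from above near such points already signals this.

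What you can and should prove is the following. Near $x\in\cS_j\setminus\cS_i$, $v^n_i$ is bounded above. The alternative $v^n_i\to-\infty$ does extend from $\Sigma\setminus\cS$ to $\Sigma\setminus\cS_i$, by comparison rather than Harnack. Drop the nonpositive cross term to get $-\Delta\tilde v^n_i\le F^n$ with $F^n$ bounded in $L^p$ for some $p>1$. Then $\tilde v^n_i\le h^n+C$, where $h^n$ is harmonic with $h^n|_{\partial\B_r}=\tilde v^n_i|_{\partial\B_r}\to-\infty$.

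The bounded alternative, however, only holds in $L^\infty_{loc}(\Sigma\setminus\cS)$. The paper's proof does no better at this point: it reads the dichotomy off Lemma~\ref{W-C-B}, which gives convergence only on $\Sigma\setminus\cS$. So the defensible form of (b) has $\cS$ in place of $\cS_i$ in the bounded alternative. That weaker form is all the later arguments use: Lemma~\ref{lemma:w2p}, Lemma~\ref{lemma:gre} and the proof of Theorem~\ref{thm:cpt_sols}.
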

\begin{proof}
{\bf Case I.} When $\mathcal{S}=\mathcal{S}_1\cup\mathcal{S}_2=\emptyset$.\\
Then $v^n_i$ is uniformly bounded from above for $i=1,2$. Moreover, 
\begin{equation*}
	\big|-\Delta_g u^n_i+\beta u^n_i\big|\leq\sum^2\limits_{j=1}|a_{ij}|\big(V_je^{v^n_j}+\rho^n_j\big)\leq C\big(\sum^2\limits_{j=1}V_j+1\big),
\end{equation*}
for some constant $C>0$. This implies that $-\Delta_gu^n_i+\beta u^n_i$ is uniformly bounded in $L^{\tilde{q}}(\Sigma)$ for some $\tilde{q}>1$ since $\min\limits_{1\leq i\leq2;\,1\leq l\leq\iota}\{\gamma_{il}\}>-1$.

By the standard elliptic estimate, $u^n_i$ is uniformly bounded in $\oH$ and $L^\infty$. This implies the uniform boundedness of $\int_{\Sigma}V_ie^{u^n_i}$. Thus, $v^n_i=u^n_i-\log\int_{\Sigma}V_ie^{u^n_i}+\log\rho^n_i$ is uniformly bounded in $H^1$ and $L^\infty$. 
Hence, the alternative (a) occurs.
\medskip

\noindent {\bf Case II.} When $\mathcal{S}=\mathcal{S}_1\cup\mathcal{S}_2$ is nonempty.\\
In this case, $\mathcal{S}$ is finite by Lemma \ref{lem:S_i}. 
Moreover, by Lemmas \ref{W-C-B} and \ref{ri}, $v^n_i\to-\infty$ in any compact subset of $\Sigma\setminus\mathcal{S}_i$ if $\mathrm{r}_i\equiv0$; and $v^n_i$ is bounded in $L^{\infty}_{loc}(\Sigma\setminus\mathcal{S}_i)$ if $\mathrm{r}_i\not\equiv0$.
Hence, the alternative (b) occurs.

It remains to prove the additional assertion in Case II. 
Suppose that $x_0\in \cS_i\setminus(\cS_1\cap\cS_2)$ and let $j\neq i$. 
To the contrary, assume that $\mathrm r_i\not\equiv0$. 
By Lemma \ref{ri}, $\overline v_i^n$ remains bounded, and hence Lemma \ref{W-C-B} implies that $\{v_i^n\}_{n\in\N}$ is locally uniformly bounded in $\Sigma\setminus\cS$. 
Since $x_0\notin\cS_j$, the nonlinear term corresponding to the $j$-th component is locally controlled near $x_0$. 
Moreover, the term involving $\beta u_i^n$ is bounded in $L^p$ for some $p>1$ by \eqref{W-C-B-4}. 
Therefore, after absorbing the term involving the $j$-th component and the term $\beta u_i^n$ by solving the corresponding local Poisson problem, the equation for the $i$-th component reduces to a scalar Liouville-type equation. 
Then, the argument of \cite[Theorem 3]{brezis_uniform_1991} rules out the possibility that the $i$-th component blows up at $x_0$ while $\{v_i^n\}_n$ remains locally uniformly bounded in $\Sigma\setminus\cS$.
This contradiction proves that $\mathrm{r}_i\equiv 0$. 
The conclusion now follows immediately from Lemmas \ref{W-C-B} and \ref{ri}.

\medskip

Finally, by \eqref{W-C-r} and the definition of $\widetilde{\mathcal{S}}_i$, we have $\mu_i=\mathrm{r}_idv_g+\sum\limits_{x\in\mathcal{S}_i}\sigma_i(x)\delta_x$ and $\sigma_i(x)\geq{\frac{\varrho(x)}{2}}\min\{1,1+\gamma_i(x)\}$ for any $x\in\mathcal{S}_i$. 
\end{proof}

The following lemma can be derived from Lemma~\ref{lem:S_i} and Lemma~\ref{W-C-B} using standard arguments; see, for instance, \cite{Battaglia2015a_note}. For completeness, we provide the proof.
\begin{lemma}\label{lemma:w2p}
For $i=1,2$, $s_i \in W^{2,p}(\Sigma)$ for some small $p>1$. 
Moreover, if $\mathrm{r}_{i}\not\equiv0$ for some $i\in\{1,\,2\}$, then $\sum_{j=1}^2 a_{ij} \sigma_j(x)<\varrho(x)\big(1+\gamma_{i}(x)\big)$ for all $x\in\mathcal{S}_{i}$.
\end{lemma}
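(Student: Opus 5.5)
The plan is to treat the two assertions separately. Both rest on the equation \eqref{W-C-B-2} for $s_i$ and on the formula $\mathrm{r}_i=\hat V_i e^{s_i}$ from Remark \ref{Bdd-formula-ri}.

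\textbf{The $W^{2,p}$ regularity.} By Lemma \ref{W-C-B}, the right-hand side of \eqref{W-C-B-2} is
\[
f_i:=\sum_j a_{ij}\Bigl(\mathrm{r}_j+\sum_{x\in\cS_j}\sigma_j(x)-\rho_j\Bigr)-\beta s_i .
\]
Here $\mathrm{r}_j\in L^{\tilde q}(\Sigma)$ for some $\tilde q>1$ (see \eqref{W-C-r}), the remaining terms are constants, and $s_i\in L^r(\Sigma)$ for every $r<\infty$ because $s_i\in W^{1,q}(\Sigma)$. Hence $f_i\in L^{p}(\Sigma)$ with $p=\min\{\tilde q,2\}>1$. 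Standard $L^p$ theory for the Neumann problem for $-\Delta_g$ then gives $s_i\in W^{2,p}(\Sigma)$, since $s_i$ is the zero-mean weak solution of $-\Delta_g s_i=f_i+\beta s_i-\beta s_i$. For boundary points one can work in boundary isothermal coordinates with the even reflection used in Lemma \ref{lem:S_i}, reducing to interior $W^{2,p}$ estimates. In particular $s_i$ is continuous, hence bounded, on $\Sigma$ by Sobolev embedding.

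\textbf{The mass inequality.} Suppose $\mathrm{r}_i\not\equiv0$. By Lemma \ref{ri}, $\overline v_i^n$ stays bounded, so $\mathrm{r}_i=\hat V_i e^{s_i}$ holds with a finite limit of $\overline v^n_i$. Since $\mathrm{r}_i\le \mu_i$ and $\mu_i(\Sigma)<\infty$, we have $\mathrm{r}_i\in L^1(\Sigma)$. Fix $x\in\cS_i$. Remark \ref{Bdd-formula-ri} gives
\[
\hat V_i\simeq d_g(\cdot,x)^{2\gamma_i(x)-\frac{2}{\varrho(x)}\sum_j a_{ij}\sigma_j(x)},
\]
and $e^{s_i}$ is bounded above and below by positive constants near $x$ because $s_i$ is continuous. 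Integrating in geodesic polar coordinates, the $L^1$ condition near $x$ requires
\[
2\gamma_i(x)-\frac{2}{\varrho(x)}\sum_j a_{ij}\sigma_j(x)>-2
\]
for interior points. At boundary points the domain is a half-disc, but the area is still comparable to $r\,dr\,d\theta$, so the threshold is again $-2$. This inequality is exactly $\sum_j a_{ij}\sigma_j(x)<\varrho(x)(1+\gamma_i(x))$.

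\textbf{Main obstacle.} Integrability alone only gives the \emph{non-strict} borderline and is fine as stated. What needs care is the justification of the asymptotic $\simeq$ in Remark \ref{Bdd-formula-ri}. It requires the expansion $G^g_x=-\frac{2}{\varrho(x)}\log d_g+O(1)$ from Lemma \ref{lem:Green-properties}. It also requires controlling the other Green terms $G_y^g$ with $y\neq x$, which are bounded near $x$, together with the singular weight $V_i\simeq d^{2\gamma_i}$. Finally, the $L^\infty$ bound on $s_i$ is needed, and this is why the $W^{2,p}$ step comes first.
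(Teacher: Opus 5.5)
Your argument is circular at its first step. You take $\mathrm{r}_j\in L^{\tilde q}(\Sigma)$ globally from the sentence after \eqref{W-C-r}. But the reason given there, $\gamma_{il}>-1$, only controls $V_ie^{v^n_i}\le CV_i$ where $v^n_i$ stays bounded above, i.e.\ on compact subsets of $\Sigma\setminus\cS$. Indeed, the paper's proof of this lemma uses only $\mathrm{r}_i\in L^{\tilde q}_{loc}(\Sigma\setminus\cS)$.

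Near $x\in\cS_i$ you have $\mathrm{r}_i=\hat V_ie^{s_i}$ with $\hat V_i\simeq d_g(\cdot,x)^{2\gamma_i(x)-\frac{2}{\varrho(x)}\Lambda_i(x)}$, where $\Lambda_i:=\sum_j a_{ij}\sigma_j$. So, once $s_i$ is bounded below, the assertion $\mathrm{r}_i\in L^{\tilde q}$ near $x$ for some $\tilde q>1$ is essentially the inequality $\Lambda_i(x)<\varrho(x)(1+\gamma_i(x))$ you are trying to prove. Hence your $W^{2,p}$ bound presupposes the conclusion, and so does the continuity of $s_i$ that you then use to bound $e^{s_i}$ from below.

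The order has to be reversed:
\begin{enumerate}
\item obtain a lower bound for $s_i$ near $\cS$ knowing only $\mathrm{r}_j\in L^1$;
\item deduce the strict inequality from $\mathrm{r}_i\in L^1$ (note that $d_g(\cdot,x)^a\in L^1$ near $x$ already forces $a>-2$ strictly, so there is no borderline issue);
\item get $\mathrm{r}_i\in L^p$ for some $p>1$ by H\"older, using $e^{s_i}\in L^p$ for all $p$ (Lemma \ref{W-C-B});
\item only then conclude $s_i\in W^{2,p}$.
\end{enumerate}

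The missing idea is how to obtain that lower bound. By Lemma \ref{cor:Green-potential},
$s_i=\sum_j a_{ij}\int_\Sigma G^g(\cdot,y)\mathrm{r}_j(y)\,dv_g(y)+H_i$ with $H_i$ bounded, and moreover $G^g\ge -C$ and $\mathrm{r}_j\ge 0$. If only $\mathrm{r}_1\not\equiv0$, this gives $s_1\ge -C$ at once.

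If both residuals are nontrivial, the negative off-diagonal Cartan entry puts $-\int_\Sigma G^g(\cdot,y)\mathrm{r}_j(y)\,dv_g(y)$, $j\neq i$, into $s_i$. That term cannot be bounded below near $x$ when $\mathrm{r}_j$ is only known to be in $L^1$ there. Your proposal never confronts this coupling. The paper handles it as follows:
\begin{itemize}
\item set $w=\frac{2s_1+s_2}{3}$, which solves $(-\Delta_g+\beta)w=\mathrm{r}_1+\mathrm{const}$, so $w\ge -C$;
\item convexity, $e^w\le\frac23e^{s_1}+\frac13e^{s_2}$, gives $\int_\Sigma\min\{\hat V_1,\hat V_2\}\,dv_g<\infty$, so at each $x\in\cS$ at least one of the two strict inequalities holds, say for $i=1$;
\item this yields $\mathrm{r}_1\in L^p$ near $x$, which makes $-\int_\Sigma G^g\mathrm{r}_1\,dv_g$ locally bounded, so $s_2\ge -C$ near $x$ and the second inequality follows in the same way.
\end{itemize}
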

\begin{proof} 
For convenience, denote \[ \Lambda_i(x):=\sum_{j=1}^2a_{ij}\sigma_j(x), \qquad i=1,2. \] 
Recall that $s_i$ satisfies \eqref{W-C-B-2} and $\mathrm{r}_i=\hat{V}_i e^{s_i}$.
If $\mathrm{r}_1\equiv \mathrm{r}_2\equiv0$, then Lemma \ref{ri} gives 
$\rho_i=\sum_{x\in\mathcal S_i}\sigma_i(x)$ for $i=1,2$.
Hence, \[ -\Delta_gs_i+\beta s_i=0 \quad\text{in }\intsigma, \qquad \partial_{\nu_g}s_i=0 \quad\text{on }\partial\Sigma. \] 
Since  $-\beta$ is not an eigenvalue   of $-\Delta_g$ restricted to the zero-mean Neumann space, we obtain $s_i\equiv0$ for $i=1,2$.

We now assume that at least one residual is nontrivial. By Lemma \ref{W-C-B} and \eqref{W-C-r}, 
\[ 
s_i\in L^\infty_{loc}(\Sigma\setminus\mathcal S), \qquad \mathrm r_i\in L^{\tilde{q}}_{loc}(\Sigma\setminus\mathcal S) \quad \text{ for some } 1<\tilde{q}<\infty.
\] 
Moreover, by Remark \ref{Bdd-formula-ri}, near any $x_0\in\mathcal S_i$, 
\begin{equation}\label{eq:local-weight-w2p} 
\hat{V}_i(x) \simeq d_g(x,x_0)^{ 2\gamma_i(x_0) -\frac{2}{\varrho(x_0)}\Lambda_i(x_0) }. 
\end{equation}

For later use, set
\[
c_j:=\sum_{x\in\mathcal S_j}\sigma_j(x)-\rho_j,
\qquad j=1,2.
\]
The right-hand side of \eqref{W-C-B-2} has zero mean, by the homogeneous Neumann condition and the normalization
$\int_\Sigma s_i\,dv_g=0$. 
Hence, by Lemma \ref{cor:Green-potential}, we have the representation formula
\begin{equation*}
s_i(x)=\sum_{j=1}^2a_{ij}\int_\Sigma G^g(x,p)\mathrm r_j(p) dv_g(p)+H_i(x),
\end{equation*}
where
\[
H_i(x):=\sum_{j=1}^2a_{ij}c_j \int_\Sigma G^g(x,p) dv_g(p).
\]
By the logarithmic estimate for $G^g$ in Lemma \ref{lem:Green-properties},
it holds that $\sup_{x\in\Sigma} \int_\Sigma |G^g(x,p)| dv_g(p)<\infty$.
Thus, $H_i\in L^\infty(\Sigma)$.

\medskip

\noindent\textbf{Case I.} Suppose that exactly one residual is nontrivial. Without loss of generality, let $\mathrm r_1\not\equiv0, \mathrm r_2\equiv0.$
By the Green representation associated with \eqref{W-C-B-2}, 
\[ 
s_1(x) = 2\int_\Sigma G^g(x,y)\mathrm{r}_1(y) dv_g(y)+H_1(x), 
\] 
where $H_1$ is bounded on $\Sigma$. 
Since $G^g(x,y)\geq-C$ and $\mathrm{r}_1\in L^1(\Sigma)$, it follows that $s_1\geq -C$.
Therefore, for every $x_0\in\mathcal S_1$, \eqref{eq:local-weight-w2p} gives 
\[ \mathrm r_1(x) \geq C\,d_g(x,x_0)^{ 2\gamma_1(x_0) -\frac{2}{\varrho(x_0)}\Lambda_1(x_0) } \] 
near $x_0$. 
Since $\mathrm r_1\in L^1(\Sigma)$, necessarily 
\[ \Lambda_1(x_0) < \varrho(x_0) \bigl(1+\gamma_1(x_0)\bigr). \] 
Hence, the relation $\mathrm{r}_i=\hat{V}_i e^{s_i}$, together with the exponential integrability of $s_1$ in Lemma \ref{W-C-B}, implies by H\"older's inequality that $\mathrm{r}_1\in L^p(\Sigma)$ for some $p>1$.
Since $\mathrm{r}_2\equiv0$, equation \eqref{W-C-B-2} and the global Neumann $W^{2,p}$-estimate yield 
$s_i\in W^{2,p}(\Sigma)$, $i=1,2$.

\medskip

\noindent\textbf{Case II.} Suppose that both residuals $\mathrm{r}_1, \mathrm{r}_2$ are nontrivial. 
By Theorem \ref{thm:compactness_concentration}, we have $\mathcal S_1=\mathcal S_2=\mathcal S$.
Set \[ w:=\frac{2s_1+s_2}{3}. \] 
Then equation \eqref{W-C-B-2} and the Green representation give 
\[ w(x) = \int_\Sigma G^g(x,y)\mathrm{r}_1(y)\,dv_g(y)+H(x), \] 
where $H=\frac{2H_1+H_2}{3} \in L^\infty(\Sigma)$. 
Thus, it holds that $w\geq-C$.

Using the convexity of the exponential function, we obtain 
\[ 
C\int_\Sigma\min\{\hat{V}_1,\hat{V}_2\}\,dv_g \leq \int_\Sigma \min\{\hat{V}_1,\hat{V}_2\}e^w\,dv_g 
\leq \frac23\int_\Sigma \hat{V}_1e^{s_1}\,dv_g + \frac13\int_\Sigma \hat{V}_2e^{s_2}\,dv_g  <\infty. 
\] 
Consequently, for every $x_0\in\mathcal S$, there exists $i\in\{1,2\}$ such that 
\[ \Lambda_i(x_0) < \varrho(x_0) \bigl(1+\gamma_i(x_0)\bigr). \] 
Fix $x_0\in\mathcal S$ and suppose, without loss of generality, that the above inequality holds for $i=1$.
As in Case I, there exist $p>1$ and $r>0$ such that $\mathrm{r}_1\in L^p\bigl(U_r^g(x_0)\bigr)$.

For $x\in U_{r/2}^g(x_0)$, the Green representation for $s_2$ can be written as 
\[ \begin{aligned} 
s_2(x) =& 2\int_\Sigma G^g(x,y)\mathrm{r}_2(y) dv_g(y) -\int_{U_r^g(x_0)}G^g(x,y)\mathrm{r}_1(y) dv_g(y) \\ 
&\quad -\int_{\Sigma\setminus U_r^g(x_0)} G^g(x,y)\mathrm{r}_1(y) dv_g(y) +H_2(x), 
\end{aligned} \] 
where $H_2$ is bounded. 
By $L^{p'}$-bound for the Green function and its boundedness away from the diagonal, using H\"older's inequality, we deduce that $s_2\geq-C$ in $U_{r/2}^g(x_0)$.
Applying \eqref{eq:local-weight-w2p} to $\mathrm{r}_2$ and using $\mathrm{r}_2\in L^1(\Sigma)$, we obtain 
\[ \Lambda_2(x_0) < \varrho(x_0) \bigl(1+\gamma_2(x_0)\bigr). \] 
The same argument then yields $\mathrm{r}_2\in L^{p_2}\bigl(U_{r/2}^g(x_0)\bigr)$ for some $p_2>1$. 
Since $x_0$ is arbitrary and $\mathcal S$ is finite, we may choose $p>1$ such that $\mathrm{r}_i\in L^p(\Sigma)$, $i=1,2$.
Finally, \eqref{W-C-B-2} and the global $W^{2,p}$-estimate for the homogeneous Neumann problem imply 
$s_i\in W^{2,p}(\Sigma)$, $i=1,2$.
This also proves the strict inequality for nontrivial residual $\mathrm{r}_i \not\equiv 0$. 
\end{proof}

We now derive algebraic constraints on the local limiting masses. 
The following Pohozaev-type identity is obtained by adapting the local arguments in \cite{jostandwang2001,Lucia2002Vortex} to the present setting.


\begin{lemma}\label{lemma:identity}
If $\mathcal{S}\neq\emptyset$, then for any $x\in\mathcal{S}$, $\sigma_1(x)$ and $\sigma_2(x)$ satisfy the following identity:
\begin{equation*}
\sigma_1^2(x)+\sigma_2^2(x)-\sigma_1(x)\sigma_2(x)=\varrho(x)\big(1+\gamma_1(x)\big)\sigma_1(x)+\varrho(x)\big(1+\gamma_2(x)\big)\sigma_2(x).
\end{equation*}
\end{lemma}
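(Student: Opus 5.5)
The plan is to localize at $x_0\in\mathcal S$ in the isothermal chart $(y_{x_0},U(x_0))$ of Lemma~\ref{lem:S_i}, with even reflection when $x_0\in\partial\Sigma$, and then to run the classical Pohozaev identity for the Toda system on small balls $\mathbb B_r$. In the chart the reflected functions satisfy
\[
-\Delta\tilde v^n_i=\sum_{j=1}^2 a_{ij}e^{\tilde\varphi_{x_0}}\tilde V_j e^{\tilde v^n_j}-\beta e^{\tilde\varphi_{x_0}}\tilde u^n_i-e^{\tilde\varphi_{x_0}}\sum_j a_{ij}\rho^n_j \quad\text{in }\mathbb B_{r_0}.
\]
Write $\tilde V_j e^{\tilde\varphi_{x_0}}=|y|^{2\gamma_j(x_0)}K_j(y)$ with $K_j>0$ Lipschitz (the reflection preserves the radial structure, since $x_0$ sits on $\{y_2=0\}$). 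For the Cartan system one works with $w^n=(w^n_1,w^n_2)$, $w^n_i:=\tilde v^n_i$, and the quadratic form with inverse matrix $(a^{ij})=\frac13\begin{pmatrix}2&1\\1&2\end{pmatrix}$. One multiplies the equation for $w_i$ by $y\cdot\nabla w_k$, contracts with $a^{ik}$, and integrates over $\mathbb B_r$. This gives the standard identity
\[
\sum_{i,k}a^{ik}\int_{\partial\mathbb B_r}\!\Big(r\,\partial_\nu w_i\partial_\nu w_k-\tfrac r2\nabla w_i\cdot\nabla w_k\Big)
=\sum_j\int_{\mathbb B_r}(2+2\gamma_j+y\cdot\nabla\log K_j)|y|^{2\gamma_j}K_je^{w_j}-\sum_j r\!\int_{\partial\mathbb B_r}|y|^{2\gamma_j}K_je^{w_j}+E_n(r),
\]
where $E_n(r)$ collects the contributions of the lower-order terms $-\beta e^{\tilde\varphi}\tilde u^n_i$ and $e^{\tilde\varphi}\rho^n_j$.

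Next pass $n\to\infty$ and then $r\to0$. On the right, the interior integrals of $e^{w_j}$ tend to $\tilde\sigma_j$, the mass of the reflected measure on $\mathbb B_r$. This equals $\sigma_j(x_0)$ for interior points and $2\sigma_j(x_0)$ for boundary points. The term $y\cdot\nabla\log K_j$ contributes $O(r)$ and vanishes in the limit. The boundary term $r\int_{\partial\mathbb B_r}e^{w_j}$ also vanishes. For this I use that away from $\mathcal S$ either $v^n_j\to-\infty$ or $v^n_j$ is locally bounded (Theorem~\ref{thm:compactness_concentration}), so in the limit it is $r\int_{\partial\mathbb B_r}\mathrm r_j$, which tends to $0$ along a sequence $r_k\to0$ since $\mathrm r_j\in L^1$.

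On the left, Lemma~\ref{W-C-B} gives $C^1_{loc}$ convergence away from $x_0$ (elliptic estimates on annuli). The limit is $\tilde w_i=-\frac{\tilde\Lambda_i}{2\pi}\log|y|+\text{(regular)}$, with $\tilde\Lambda_i=\sum_j a_{ij}\tilde\sigma_j$. Here the regular part is $s_i$ plus the regular part of the Green function. I expect $s_i\in W^{2,p}$ (Lemma~\ref{lemma:w2p}) to give $r\int_{\partial \mathbb B_r}|\nabla s_i|^2\to0$ along a suitable sequence $r_k$; the cross terms are $O(\int_{\partial\mathbb B_r}|\nabla s_i|)$, which tend to $0$ as well. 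Note that boundary poles of $G^g$ carry coefficient $-\frac{2}{2\pi}\log$, matching $\tilde\sigma=2\sigma$ after reflection. The left side then becomes $\frac{1}{4\pi}\sum a^{ik}\tilde\Lambda_i\tilde\Lambda_k=\frac{1}{4\pi}\sum a_{ij}\tilde\sigma_i\tilde\sigma_j=\frac{1}{2\pi}(\tilde\sigma_1^2+\tilde\sigma_2^2-\tilde\sigma_1\tilde\sigma_2)$. The right side becomes $\sum_j 2(1+\gamma_j)\tilde\sigma_j$. This yields $\tilde\sigma_1^2+\tilde\sigma_2^2-\tilde\sigma_1\tilde\sigma_2=4\pi\sum(1+\gamma_j)\tilde\sigma_j$. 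In the interior this is the claim with $\varrho=4\pi$. At the boundary, substituting $\tilde\sigma=2\sigma$ and dividing by $4$ gives $\varrho=2\pi$.

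The main obstacle is controlling the error $E_n(r)$ and the regular parts uniformly. The $\rho^n_j$ term gives $\int_{\mathbb B_r}|y||\nabla w_i|\le Cr^{2-2/q'}\|\nabla w\|_{L^q}\to0$, using the uniform $W^{1,q}$ bound \eqref{W-C-B-4}. The $\beta\tilde u^n_i$ term gives $\int_{\mathbb B_r}|\tilde u^n_i||y||\nabla\tilde u^n_k|$, and I would bound it by H\"older with $\tilde u^n\in L^s$ for all $s$ (Brezis–Merle) and $\nabla\tilde u^n\in L^q$, so that it is $o_r(1)$ uniformly in $n$. The remaining subtlety is the singular weight with $\gamma_j\in(-1,0)$, where the integration by parts of $|y|^{2\gamma_j}K_j\,y\cdot\nabla e^{w_j}$ is still valid since $|y|^{2\gamma_j+1}$ is integrable.
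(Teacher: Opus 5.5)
Your proposal is correct and follows the same route as the paper: a local Pohozaev identity in (boundary) isothermal coordinates with even reflection, contracted with the inverse Cartan matrix. In both, $s_i\in W^{2,p}$ from Lemma~\ref{lemma:w2p} and the uniform $W^{1,q}$ bounds make the regular and lower-order ($\beta\tilde u_i^n$, $\rho_j^n$) contributions vanish as $r\to0$, and the doubled boundary masses are absorbed into $\varrho(x_0)=2\pi$. You supply details the paper only sketches, such as choosing radii $r_k\to0$ along which the boundary terms vanish. The one slip is cosmetic: the H\"older bound on $\int_{\mathbb B_r}|y||\nabla w_i|$ is of order $r^{1+2/q'}$ rather than $r^{2-2/q'}$, and it still tends to zero uniformly in $n$.
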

\begin{proof}
Fix $x_0\in\mathcal S$. 
If $x_0\in\intsigma$, we work in isothermal coordinates centered at $x_0$. 
If $x_0\in\partial\Sigma$, we use boundary isothermal coordinates and extend the transformed system evenly across the flat boundary, as in the proof of Lemma \ref{lem:S_i}. 
By Lemma~\ref{lemma:w2p}, the regular parts belong to $W^{2,p}$ for some $p>1$. 
Hence, the regular and lower-order terms make no contribution in the local Pohozaev identity as the radius tends to zero. 
Applying the standard local Pohozaev identity to the transformed system and taking first $n\to\infty$ and then the radius to zero, we obtain the stated identity. 
In the boundary case, the reflection doubles the local masses, and the resulting factor is incorporated in $\varrho(x_0)$.
\end{proof}
\par

{ The local mass classification for singular $SU(3)$ Toda systems in planar domains was proved in \cite[Theorem 1.2]{LinWeiZhang2015}. 
For interior blow-up points, the same argument can be used in local isothermal coordinates, since the lower-order term involving $\beta$ does not affect the local blow-up analysis. 
For boundary blow-up points, we use boundary isothermal coordinates and the even reflection as in Lemma \ref{lem:S_i}. 
The reflected coefficients may not be smooth across the flat boundary, but they have enough regularity on each side. 
The calculations involving their derivatives are done separately on the two half-disks, and the boundary terms vanish or cancel. 
Hence, the argument in \cite{LinWeiZhang2015} also applies to the boundary case.
}

\begin{proposition}\label{lemma:classification}
Assume that $\mathcal{S}\neq\emptyset$. Then, for every $x\in \mathcal{S}$,
\[
\bigl(\sigma_1(x),\sigma_2(x)\bigr) \in \Lambda_{\gamma_1(x),\gamma_2(x)}(x).
\]
\end{proposition}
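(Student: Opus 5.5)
The plan is to reduce the statement to the planar classification of \cite[Theorem 1.2]{LinWeiZhang2015}. Near $x_0\in\mathcal S$ we work in the local chart of Lemma \ref{lem:S_i}, using the even reflection when $x_0\in\partial\Sigma$.

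Fix $x_0\in\mathcal S$ and $r>0$ small so that $U_r(x_0)\cap\mathcal S=\{x_0\}$. In the chart $y_{x_0}$ the reflected functions $\tilde v^n_i$ satisfy, on $\B_r$,
\begin{equation*}
-\Delta\tilde v^n_i=\sum_{j=1}^2a_{ij}e^{\tilde\varphi_{x_0}}\tilde V_je^{\tilde v^n_j}-\beta e^{\tilde\varphi_{x_0}}\tilde u^n_i-e^{\tilde\varphi_{x_0}}\sum_{j}a_{ij}\rho^n_j .
\end{equation*}
The last two terms are bounded in $L^p(\B_r)$ for some $p>1$ by \eqref{W-C-B-4}, and $\beta\tilde u^n_i$ also lies in $L^q$ for every finite $q$ by Sobolev embedding of $W^{1,q}$. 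We let $\eta^n_i$ be the solution of $-\Delta\eta^n_i=-\beta e^{\tilde\varphi_{x_0}}\tilde u^n_i-e^{\tilde\varphi_{x_0}}\sum_ja_{ij}\rho^n_j$ in $\B_r$ with $\eta^n_i=0$ on $\partial\B_r$. It is uniformly bounded in $C^{0,\alpha}(\B_r)$, and we subtract it.

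Next we write $\tilde V_j=|y|^{2\gamma_j(x_0)}W_j(y)$, where $W_j>0$. When $x_0\in\partial\Sigma$ we have $\cQ\cap\partial\Sigma$ allowed, so $W_j$ is only Lipschitz across $\{y_2=0\}$, but it is smooth on each half disk. The functions $w^n_i:=\tilde v^n_i-\eta^n_i+\log(e^{\tilde\varphi_{x_0}}W_i)$ up to a harmonic correction then satisfy
\begin{equation*}
-\Delta w^n_i=\sum_ja_{ij}|y|^{2\gamma_j(x_0)}\hat h^n_je^{w^n_j},
\end{equation*}
with $\hat h^n_j$ positive and uniformly bounded in $C^{0,\alpha}$. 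We also have bounded oscillation of $w^n_i$ on $\partial\B_r$, since $u^n_i\to u_i$ in $L^\infty_{loc}(\Sigma\setminus\mathcal S)$ by Lemma \ref{W-C-B}, and uniformly bounded total mass. The only blow-up point is $0$. The local masses of this reflected system are $\varrho$-normalized: at interior points they equal $\sigma_i(x_0)$, while at boundary points the reflection gives $2\sigma_i(x_0)$, with the threshold $4\pi$ versus $2\pi$ matching $\varrho(x_0)$.

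We now verify the hypotheses of \cite[Theorem 1.2]{LinWeiZhang2015}. The masses satisfy the Pohozaev identity, which is Lemma \ref{lemma:identity} rescaled. The remaining ingredient is the selection process: the bubbling scheme, the tower of bubbles, and the step-by-step mass accumulation, where each step adds a multiple of $4\pi$ to one component. That process uses Harnack-type inequalities and local Pohozaev identities on annuli around $0$, with only $C^{1}$ or H\"older control of the coefficients. At boundary points we perform each Pohozaev computation on the two half-annuli separately, where the coefficients are smooth. The boundary integrals on $\{y_2=0\}$ cancel by the even symmetry, since the normal derivatives there vanish by the Neumann condition. We then obtain that the reflected masses lie in the planar set built from the rules 1)--3) with $4\pi$. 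Dividing by $2$ in the boundary case gives exactly $\Lambda_{\gamma_1(x_0),\gamma_2(x_0)}(x_0)$ with step $\varrho(x_0)$.

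The main obstacle is this last step: checking that the LinWeiZhang argument survives merely Lipschitz reflected coefficients. The key point is the Pohozaev identities involving $y\cdot\nabla\hat h_j$, which need $\nabla\hat h_j$ bounded. The first thing to try is to show the error terms are $O(r)$ times the mass, using only the bound on $\nabla\hat h_j$ in $L^\infty$ on each side. This is precisely the regularity available.
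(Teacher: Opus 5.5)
Your proposal is essentially the paper's argument, which likewise reduces to \cite[Theorem~1.2]{LinWeiZhang2015} in interior or boundary isothermal coordinates with the even reflection, treats the $\beta$-term as lower order, carries out the derivative computations on the two half-disks so that the boundary terms vanish or cancel, and rescales the doubled boundary masses so that the step $4\pi$ becomes $\varrho(x_0)=2\pi$. The one step to repair is your normalization. Do not put $\log(e^{\tilde\varphi_{x_0}}W_i)$ into $w^n_i$: for $f=\log(e^{\varphi_{x_0}}W_i)$, $\Delta$ of the even reflection of $f$ is not harmonic, and at boundary points it contains the line measure $2\,\partial_{y_2}f(y_1,0)\,dy_1$ on $\{y_2=0\}$. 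This is nonzero in general, e.g.\ if $\partial\Sigma$ is not geodesic or $\partial_{\nu_g}h_i\neq0$, and no harmonic correction removes it. Keep instead $\hat h^n_j:=e^{\tilde\varphi_{x_0}}W_je^{\eta^n_j}$ as the coefficient; it is uniformly Lipschitz, which is exactly the regularity your last paragraph deals with.
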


We next show that, if blow-up occurs, there exists $i\in\{1,2\}$ such that $r_i\equiv 0$. Together with Proposition \ref{lemma:classification}, this implies that the corresponding parameter $\rho_i$ belongs to $\Gamma_i$.

\begin{lemma}\label{lemma:gre}
If $\mathcal{S}\neq\emptyset$, then for any $x\in\mathcal{S}$, 
there exists $i \in \{1,2\}$, such that 
\begin{equation*}
\sum\limits_{j=1}^2 a_{ij}\sigma_j(x)\geq \varrho(x)\big(1+\gamma_i(x)\big).
\end{equation*}
Hence, there exists $i \in \{1,2\}$, such that $\mathrm{r}_i\equiv 0$ and $\rho_i= \sum\limits_{x\in\mathcal{S}_i}\sigma_i(x)$.
\end{lemma}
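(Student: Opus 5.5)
The plan is to argue by contradiction and use only the Pohozaev identity of Lemma~\ref{lemma:identity}. Fix $x\in\mathcal S$ and write $\sigma_i=\sigma_i(x)$, $\alpha_i=\gamma_i(x)$, $\varrho=\varrho(x)$, $\Lambda_i=\sum_j a_{ij}\sigma_j$. Suppose that $\Lambda_1<\varrho(1+\alpha_1)$ and $\Lambda_2<\varrho(1+\alpha_2)$.

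Since $\sigma_1,\sigma_2\ge0$, multiply the first inequality by $\sigma_1$ and the second by $\sigma_2$, and add. This gives
\[
\sigma_1\Lambda_1+\sigma_2\Lambda_2\le \varrho(1+\alpha_1)\sigma_1+\varrho(1+\alpha_2)\sigma_2,
\]
with strict inequality as soon as $(\sigma_1,\sigma_2)\ne(0,0)$. Since $x\in\mathcal S=\mathcal S_1\cup\mathcal S_2$, Theorem~\ref{thm:compactness_concentration} gives $\sigma_i(x)\ge\frac{\varrho}{2}\min\{1,1+\alpha_i\}>0$ for some $i$, so the inequality is indeed strict.

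The left-hand side equals
\[
2\sigma_1^2-2\sigma_1\sigma_2+2\sigma_2^2=2(\sigma_1^2+\sigma_2^2-\sigma_1\sigma_2).
\]
By Lemma~\ref{lemma:identity}, this is $2\bigl(\varrho(1+\alpha_1)\sigma_1+\varrho(1+\alpha_2)\sigma_2\bigr)$. Writing $P:=\varrho(1+\alpha_1)\sigma_1+\varrho(1+\alpha_2)\sigma_2$, we obtain $2P<P$, i.e.\ $P<0$. But $\alpha_i>-1$ and $\sigma_i\ge0$ force $P\ge0$, a contradiction. Hence some $i$ satisfies $\Lambda_i(x)\ge\varrho(x)(1+\gamma_i(x))$.

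For the second assertion, suppose both $\mathrm r_1\not\equiv0$ and $\mathrm r_2\not\equiv0$. Then Theorem~\ref{thm:compactness_concentration} gives $\mathcal S_1=\mathcal S_2=\mathcal S$. Lemma~\ref{lemma:w2p} then gives $\Lambda_i(x)<\varrho(x)(1+\gamma_i(x))$ for both $i$ at every $x\in\mathcal S$, which contradicts the first part applied at any point of $\mathcal S\neq\emptyset$. So some $\mathrm r_i\equiv0$, and Lemma~\ref{ri} yields $\rho_i=\sum_{x\in\mathcal S_i}\sigma_i(x)$.

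One case needs care: a single residual, say $\mathrm r_1$, is nontrivial while $\mathrm r_2\equiv0$. Here the conclusion $\mathrm r_2\equiv0$ is already what we want, so nothing more is needed.

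The main subtlety is the use of Lemma~\ref{lemma:w2p} when both residuals are nontrivial. Its strict inequality is stated for $x\in\mathcal S_i$, and we need it for both indices at the same point. This is guaranteed because $\mathcal S_1=\mathcal S_2$ in that case.
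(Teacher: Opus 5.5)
Your proof is correct and follows the paper's argument. For the first assertion, you multiply the two strict inequalities by $\sigma_1(x),\sigma_2(x)$, sum, and contradict the Pohozaev identity of Lemma~\ref{lemma:identity}. For the second, you apply Lemma~\ref{lemma:w2p} when both residuals are nontrivial, which is exactly what the paper does. Your explicit checks that $(\sigma_1(x),\sigma_2(x))\neq(0,0)$ (via Theorem~\ref{thm:compactness_concentration}) and that $\mathcal S_1=\mathcal S_2$ in the doubly nontrivial case spell out points the paper leaves implicit.
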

\begin{proof}
To the contrary, suppose that there exists some $x_0\in\mathcal{S}$, such that 
$ 2\sigma_1(x_0)-\sigma_2(x_0)< \varrho(x_0)(1+\gamma_1(x_0))$ and $  2\sigma_2(x_0)-\sigma_1(x_0)< \varrho(x_0)(1+\gamma_2(x_0)).$
Applying the estimates above and $(\sigma_1(x_0),\sigma_2(x_0))\neq (0,0)$, it follows 
\begin{equation*}
\sigma_1^2(x_0)+\sigma^2_2(x_0)-\sigma_1(x_0)\sigma_2(x_0)< \frac {\varrho(x_0)\big(1+\gamma_1(x_0)\big)}{2}\sigma_1(x_0)+\frac{\varrho(x_0)\big(1+\gamma_2(x_0)\big)}{2} \sigma_2(x_0).
\end{equation*}
This result contradicts Lemma~\ref{lemma:identity}.

Moreover, if $\mathrm{r}_i \not\equiv 0$ for both $i=1,2$, then by Lemma \ref{lemma:w2p}, it holds that
\begin{equation*}
\sum\limits_{j=1}^2 a_{ij}\sigma_j(x)< \varrho(x)\big(1+\gamma_i(x)\big), \quad i=1,2.
\end{equation*}
This is a contradiction, so $\mathrm{r}_i \equiv 0$ for some $i \in \{1,2\}$.
\end{proof}
\par


Now, we are ready to prove the compactness theorem. 
\begin{proof}[Proof of Theorem~\ref{thm:cpt_sols}] Let $ \rho^n = (\rho^n_1, \rho^n_2) \rightarrow \rho \in\mathcal{A} $ as $n\rightarrow +\infty$ and $ u^n = (u^n_1, u^n_2) $ be the solutions of the Toda system~\eqref{eq:toda} corresponding to the parameter $ \rho^n $.

If $v^n_1$ and $v^n_2$, defined by \eqref{eq:def_v^n_i}, both have an upper bound, then by Theorem~\ref{thm:compactness_concentration}, $\{v^n\}$ is uniformly bounded in $H^1(\Sigma)\times H^1(\Sigma)$ and $L^\infty(\Sigma)\times L^\infty(\Sigma)$.
Hence, by the $W^{2,p}$-estimate for some $p>1$, we can deduce that $u^n$ is uniformly bounded in $W^{2,p}(\Sigma)$. Thus, $u^n$ is compact in $\cH,$ through the Rellich–Kondrachov compact embedding theorem. \par

If $\cS\neq \emptyset$, Lemma~\ref{lemma:gre} yields that there exists $i\in \{1,2\}$ such that $\rho_i=\sum_{x\in \cS_i}\sigma_i(x).$ We may assume $i=1.$ Then $\rho_1\in \Gamma_1$, which contradicts the assumption $\rho \in \cA \subset \R^2_{>0} \setminus \Gamma$. 
Hence, Theorem~\ref{thm:cpt_sols} is concluded. 
\end{proof}

\section{Analysis of Sublevels}\label{sec:3}
In this section, we always assume that $\cQ:=\{ p_l:l=1,\cdots, \iota\}\subset \intsigma$
and the singular weight $\gamma_{il}\geq 0$ for $i=1,2,l=1,\cdots,\iota$.

As explained in the introduction, to avoid component interaction and singular points, we will consider two retractions of $\Sigma$ onto two disjoint, simple closed curves  $\bga_1,\bga_2\subset \pa \Sigma$, which are non-contractible in $\Sigma$; we denote them by $\cP_i: \Sigma\rightarrow \bga_i$ for $i=1,2$. As will be clear later on, the argument works fine since the weight of the points on the boundary is smaller than that of interior points.

Recall that $-\beta$ is not an eigenvalue of $-\Delta_g$ restricted to the zero-mean Neumann spaces. Throughout this section, let $K_1,K_2, I \in \N_0$ be defined as in the introduction, so that $\rho_i \in (2\pi K_i, 2\pi  (K_i+1))$, $i=1,2$ and $-\lambda_{I+1}<\beta <-\lambda_{I}$.
Let $\{\psi_i\}_{i\in \mathbb{N}_+}$ be an orthonormal basis of $\oH$ consisting of Neumann eigenfunctions such that $-\Delta_g \psi_i=\lambda_i\psi_i$.
We also define projections to divide $u_i$ into positive and negative parts of the operator $-\Delta_g+\beta$. 
Specifically, for any $u\in\oH$, we define $\Pi_{I}: \oH\rightarrow \mathbb{R}^{I}$ by
\[ \Pi_{I}(u)= (\lan u, \psi_1\ran_{\oH},\cdots,\lan u,\psi_{I}\ran_{\oH}),  \]
and set $\Pi_0\equiv 0.$


For $k,k'\in \N_0$, we define 
\begin{equation*}
\begin{split}
\Sigma_{k,k'}:=\Big\{ \sum_{j=1}^{k} t_j\delta_{x_j}+\sum_{j=k+1}^{k+k'}t_j \delta_{x_j}:  
\sum_{j=1}^{k+k'}  t_j= 1, t_{j}\geq 0, & x_j\in \intsigma \text{ for } j=1,\dots, k, \\
& x_{j}\in \partial\Sigma \text{ for } j=k+1,\dots, k+k' \Big\},
\end{split}
\end{equation*}
where $\delta_x$ is the Dirac mass of $\Sigma$ concentrated at $x\in \Sigma$.  Then, the barycenter space of $\Sigma
$  is given by 
\[ (\Sigma_{\partial})_K:= \cup_{k=0}^{\lfloor K/2\rfloor} \Sigma_{k, K-2k}, \]
where $\lfloor  a\rfloor$ denotes the greatest integer less than or equal to $a$,for any $a\in \R. $
Here, we  treat the points in $\intsigma$ and on $\partial\Sigma$ differently, as the blow-up points on the boundary exhibit only half the singularity compared to those in the interior.

On signed measure space $\cM(\Sigma)$, we use the distance induced by the duality of Lipschitz functions, i.e. for any $\mu,\mu^{\prime}\in\cM(\Sigma) $
\begin{eqnarray*}
\|\mu\|_{Lip^\prime(\Sigma)}:=\sup_{h\in Lip(\Sigma), \|h\|_{Lip(\Sigma)}=1}\left| \int_{\Sigma} h d\mu\right| \text{ and } \dd(\mu,\mu'):= \|\mu-\mu'\|_{Lip^\prime(\Sigma)}.
\end{eqnarray*}
Since  $(\Sigma_\partial)_K\subset \cM(\Sigma)$ is a Euclidean Neighborhood Retract (ENR) as in \cite[Lemma 3.1]{Battaglia2018} and \cite[Lemma 4.1]{Ndiaye2008}, we have the following retraction lemma:
\begin{lemma}\label{lem:retraction}
Given $ K \in \mathbb{N} $, for sufficiently small $ \varepsilon_0 >0$, there exists a continuous retraction
\[ \phi_K : \{ \sigma \in \cM(\Sigma), \dd(\sigma, (\Sigma_{\partial})_K )< \varepsilon_0 \} \rightarrow (\Sigma_{\partial})_K . \]
In particular, if $ \sigma_n \rightarrow \sigma $ in the sense of measures with $ \sigma \in (\Sigma_{\partial})_K  $, then $ \phi_K(\sigma_n) \rightarrow \sigma $.
\end{lemma}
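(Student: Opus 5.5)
The plan is to show that $(\Sigma_\partial)_K$ is a compact Euclidean Neighborhood Retract sitting inside a normed space; the retraction $\phi_K$ is then built by a standard argument, and the convergence statement follows from continuity. I would follow \cite[Lemma 3.1]{Battaglia2018} and \cite[Lemma 4.1]{Ndiaye2008}.

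\textbf{Step 1: compactness.} The set $(\Sigma_\partial)_K$ is the finite union over $0\le k\le\lfloor K/2\rfloor$ of the sets $\Sigma_{k,K-2k}$. Each $\Sigma_{k,K-2k}$ is the image of the compact space $\Sigma^k\times(\partial\Sigma)^{K-2k}\times\Delta^{K-k-1}$ under the map $(x,t)\mapsto\sum t_j\delta_{x_j}$. This uses the closure: an interior point may degenerate onto $\partial\Sigma$, or its weight $t_j$ may tend to zero. Such a limit still lies in $(\Sigma_\partial)_K$, since a boundary point can be counted in place of an interior one, with weight $2$ versus $1$, inside the budget $K$. The map into $(\cM(\Sigma),\dd)$ is continuous, because
\[
\|\delta_x-\delta_y\|_{Lip'}\le d_g(x,y) \quad\text{and}\quad \|\delta_x\|_{Lip'}\le 1.
\]
Hence $(\Sigma_\partial)_K$ is compact in $\cM(\Sigma)$ with the distance $\dd$.

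\textbf{Step 2: ENR structure.} I would show that $(\Sigma_\partial)_K$ is a finite-dimensional, locally contractible, compact metric space, for instance a finite CW/stratified complex. The strata are indexed by the number of distinct support points and by their interior/boundary type. Each stratum is a smooth manifold; near collisions of points a local cone structure holds, as in Ndiaye's proof for $\Sigma_K$. By the classical theorem (Borsuk, Dold), such a space embeds in some $\R^N$ as a Euclidean neighborhood retract.

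\textbf{Step 3: constructing the retraction.} Embed $\cM(\Sigma)$ linearly into a Banach space via duality with Lipschitz functions. Then apply the Dugundji extension theorem to extend the identity of $(\Sigma_\partial)_K$ to a continuous map $F$ defined on all of $\cM(\Sigma)$, with values in the convex hull of $(\Sigma_\partial)_K$. By compactness, that convex hull can be approximated by a finite-dimensional convex set. Let $r$ be the ENR retraction defined on a neighborhood $W$ of $(\Sigma_\partial)_K$ in $\R^N$ (or in the finite-dimensional span). By continuity and compactness there is $\varepsilon_0>0$ such that $\dd(\sigma,(\Sigma_\partial)_K)<\varepsilon_0$ implies $F(\sigma)\in W$. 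Set $\phi_K=r\circ F$ on this neighborhood.

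\textbf{Step 4: the convergence claim.} Suppose $\sigma_n\rightharpoonup\sigma$ weakly with $\sigma\in(\Sigma_\partial)_K$, and the $\sigma_n$ are nonnegative with bounded mass, as in the intended applications. Then $\dd(\sigma_n,\sigma)\to0$, because weak convergence and $Lip'$ convergence agree on bounded sets of positive measures on a compact space (Arzel\`a--Ascoli). So $\sigma_n$ eventually lies in the domain of $\phi_K$, and continuity gives $\phi_K(\sigma_n)\to\phi_K(\sigma)=\sigma$.

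\textbf{Main obstacle.} I expect Step 2 to be the hardest part: verifying local contractibility at degenerate configurations that mix interior and boundary points. The key configurations are interior points approaching $\partial\Sigma$, and points colliding with each other. I would handle these with a collar neighborhood of $\partial\Sigma$. The deformation pushes nearby interior points to the boundary and merges close points by combining their weights. To stay inside $(\Sigma_\partial)_K$, each interior point is counted with weight $2$ and each boundary point with weight $1$, so the total count never exceeds $K$.
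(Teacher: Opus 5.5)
Your overall route is the paper's. The paper gives no argument beyond asserting that $(\Sigma_\partial)_K$ is a Euclidean neighbourhood retract, citing \cite[Lemma 3.1]{Battaglia2018} and \cite[Lemma 4.1]{Ndiaye2008}. Your Steps 1, 2 and 4 fill that in correctly: compactness comes from closing up the interior strata into lower ones, the ENR property from Borsuk--Dold, and $\dd$-convergence from weak convergence on bounded sets via Arzel\`a--Ascoli.

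\textbf{The gap is in Step 3.} This is the only step that turns the ENR property into the map $\phi_K$, and it does not work as written. The Dugundji extension $F$ takes values in the convex hull of $(\Sigma_\partial)_K$ inside $\cM(\Sigma)$. But $r$ is defined on a neighbourhood $W\subset\R^N$ of $e((\Sigma_\partial)_K)$, where $e$ is the abstract, nonlinear embedding from Step 2. So $r\circ F$ is simply not defined. Passing to a ``finite-dimensional span'' does not help either. $(\Sigma_\partial)_K$ contains $\delta_x$ for every $x\in\partial\Sigma$, and Dirac masses at distinct points are linearly independent, so its span and convex hull are infinite-dimensional. Any finite-dimensional approximation would move $(\Sigma_\partial)_K$ itself and destroy the retraction property.

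\textbf{The repair.} It is short and makes Dugundji unnecessary.
\begin{itemize}
\item $(\Sigma_\partial)_K$ is compact, hence closed in $(\cM(\Sigma),\dd)$. By Tietze, each coordinate of $e$ extends, giving a continuous $\tilde e:\cM(\Sigma)\to\R^N$ with $\tilde e=e$ on $(\Sigma_\partial)_K$.
\item $O:=\tilde e^{-1}(W)$ is an open neighbourhood of $(\Sigma_\partial)_K$, and $\phi_K:=e^{-1}\circ r\circ\tilde e$ is a continuous retraction of $O$ onto $(\Sigma_\partial)_K$.
\item Compactness of $(\Sigma_\partial)_K$ gives $\varepsilon_0>0$ with $\{\dd(\cdot,(\Sigma_\partial)_K)<\varepsilon_0\}\subset O$.
\end{itemize}
This is just the standard fact that a compact ENR is an ANR for metric spaces.

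\textbf{Two smaller points in Step 2.}
\begin{itemize}
\item ``Finite-dimensional'' should be justified through your stratification: each stratum is a finite-dimensional manifold, then apply the countable sum theorem. Continuous images of compact finite-dimensional spaces need not be finite-dimensional.
\item Your local-contractibility sketch at $\sigma=\sum_l\tau_l\delta_{y_l}$ should also handle small-weight points far from $\operatorname{supp}\sigma$. One way, in three stages:
\begin{itemize}
\item Damp those weights with a cut-off $\eta$ and renormalise, $\mu\mapsto(1-s\eta)\mu/\int(1-s\eta)\,d\mu$.
\item Push forward by radial contractions of small balls, or boundary half-balls, onto the $y_l$. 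These maps send $\partial\Sigma$ into $\partial\Sigma$, so the weighted count $2\#(\text{interior points})+\#(\text{boundary points})$ never increases.
\item Interpolate the weights linearly.
\end{itemize}
\end{itemize}
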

We consider the following barycenter space of  $\bga_i\subset \pa \Sigma$, $i=1,2$: For $K_i \ge 1$,
\[ (\bga_i)_{\partial,K_i}:=\Big\{\sum_{j=1}^{K_i} t^i_j\delta_{x^i_j}:  \sum_{j=1}^{K_i} t^i_j=1,  t^i_j \geq 0, x^i_j\in \bga_i\Big\}.\]
To deal with negative eigendirections, we join $(\bga_i)_{\partial, K_i}$ with the normalized sphere $\mathbb{S}^{I-1}$. When $K_i, I \ge 1$, we set
$(\bga_i)_{\partial,K_i,I}:= \mathbb{S}^{I-1} \star (\bga_i)_{\partial,K_i}.$
If $K_1, K_2$ or $I=0$, we set
\[  (\bga_i)_{\pa, K_i, I}:=\begin{cases}
(\bga_i)_{\pa, K_i}     & I=0, K_i\neq 0\\
\mathbb{S}^{I-1} & I\neq 0, K_i=0\\
\emptyset & I=K_i=0
\end{cases}.\]
Here and below, we use the convention for empty factors of the join stated in the Introduction.

We will construct a map from  $(\bga_1)_{\partial,K_1,I}\star (\bga_2)_{\partial,K_2,I}:=\mathbb{S}^{I-1}\star (\bga_1)_{\partial,K_1}\star \mathbb{S}^{I-1} \star  (\bga_2)_{\partial,K_2}$ into some low energy sublevel set $J_{\beta,\rho}^{-L}$. 
Let  $$\zeta:=(X_1, \sigma_1, X_2, \sigma_2;s)\in (\bga_1)_{\partial,K_1,I}\star (\bga_2)_{\partial,K_2,I},$$ where $\sigma_i=\sum_{j=1}^{K_i} t^i_j \delta_{x^i_j}$ in $(\bga_i)_{\partial,K_i}$ with $  x^i_j\in \bga_i$ and $\sum_{j=1}^{K_i} t^i_j=1$, $X_i=(X_{i,1},\cdots, X_{i,I})$ in $\mathbb{S}^{I-1}$ for any $i=1,2$ and $s=(s_1,s_2,s_3,s_4) \in \Delta^3$.

    Let $\Lambda>0$ be a large parameter. 
    If $K_i=0$, we set $v_i \equiv 0$, and if $I=0$, we set $w_i \equiv 0$ for $i=1,2$. 
	Otherwise, we define functions $v_i=v_i^\Lambda(\zeta), w_i=w_i^\Lambda(\zeta) \in H^1(\Sigma)$ as follows: for $i=1,2$,
	\begin{equation}\label{testftns}
		\begin{split}
			v_1:=\log\left(  \sum_{j=1}^{K_1}  \frac{t^1_j  }{(1+  s_2^2 \Lambda^2  d_g^{\,2}(\cdot,x_j^1))^2}   \right), \quad &	v_2:=\log\left(  \sum_{j=1}^{K_2}  \frac{t^2_j}{(1+  s_4^2\Lambda^2  d_g^{\,2}(\cdot,x_j^2))^2}   \right),\\
			w_1:= \sqrt{\log^+(s_1\Lambda)}\sum_{j=1}^{I} X_{1,j}\psi_j, \quad & w_2:= \sqrt{\log^+(s_3\Lambda)}\sum_{j=1}^{I} X_{2,j}\psi_j,
		\end{split}
	\end{equation} 
	where  $\log ^+(\lambda):=\log \max\{1,\lambda\}$ for any $\lambda \ge 0$. In the next result, we crucially exploit the fact that $x^i_j \in \gamma_i$ and the curves are disjoint and that $\gamma_i \subset \partial\Sigma$. We can also neglect the effect of
the singularities since on the boundary we have just regular points.
	
	\begin{lemma}\label{lemma:map}
		Let $K_1, K_2, I \in \N_0$ with $K_1+K_2+I>0$.  Assume that   $(\rho_1,\rho_2)\in  (2\pi K_1, 2\pi(K_1+1))\times  (2\pi K_2, 2\pi(K_2+1)) \setminus\Gamma$ and $\beta\in (-\lambda_{I+1},-\lambda_{I})$.
		Define functions $\varphi_i=\varphi_i^\Lambda(\zeta) \in \oH$, $i=1,2$ by
		\begin{equation}\label{def:zeta}
			\left(\begin{array}{c}
				\varphi_1^\Lambda\\
				\varphi_2^\Lambda
			\end{array}\right)=\left(\begin{array}{c}
				(v_1-\overline{v_1} + w_1)-\frac 1 2 (v_2-\overline{v_2}+w_2 )\\
				(v_2-\overline{v_2}+w_2)-\frac 1 2 (v_1-\overline{v_1} + w_1)
			\end{array}\right).
		\end{equation}
		Then, $ J_{\beta,\rho}(\varphi_1^\Lambda,\varphi_2^\Lambda)\rightarrow -\infty, \text{ as } 	 \Lambda\rightarrow +\infty$
		uniformly for $\zeta\in(\bga_1)_{\partial,K_1,I}\star (\bga_2)_{\partial, K_2,I}.$
	\end{lemma}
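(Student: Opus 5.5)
We split $J_{\beta,\rho}(\varphi_1,\varphi_2)$ into three pieces: the Dirichlet part $\int Q(\varphi_1,\varphi_2)$, the lower-order part $\beta\int Q_0(\varphi_1,\varphi_2)$, and the two logarithmic terms $-\rho_i\log\int V_ie^{\varphi_i}$. The choice \eqref{def:zeta} inverts the Cartan matrix. Writing $\tilde v_i:=v_i-\overline{v_i}+w_i$, we have $Q(\varphi_1,\varphi_2)=\frac14\bigl(|\nabla\tilde v_1|^2+|\nabla\tilde v_2|^2-\langle\nabla\tilde v_1,\nabla\tilde v_2\rangle\bigr)$, and similarly for $Q_0$. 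Since the $\psi_j$ are orthonormal Neumann eigenfunctions, the $w$-parts decouple up to cross terms. We set $\lambda_1:=s_2\Lambda$ and $\lambda_2:=s_4\Lambda$ for the concentration scales, and $\mu_1:=s_1\Lambda$, $\mu_2:=s_3\Lambda$ for the eigen-directions. The key point is that on the join $\Delta^3$ at least one $s_k\ge 1/4$, so at least one of the four parameters is of order $\Lambda$.

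\textbf{Step 1 (gradient estimates).} Because the $x^i_j$ lie on $\bga_i\subset\partial\Sigma$, each bubble lives in a half-disk, which is the usual source of the factor $2\pi$. A direct computation in boundary isothermal coordinates gives $\int|\nabla v_i|^2\le 16\pi\log^+\lambda_i+C$; this is half of the $32\pi$ interior value. The curves $\bga_1,\bga_2$ are disjoint, at positive distance $d_0$. Hence $|\nabla v_1|\le C$ near $\bga_2$, $|\nabla v_2|\le C$ near $\bga_1$, and $|\nabla v_i|\le C/d_g(\cdot,\bga_i)$ overall, which makes $\int\langle\nabla v_1,\nabla v_2\rangle$ bounded. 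We also have $\int|\nabla w_i|^2=\log^+\mu_i\sum_j\lambda_jX_{i,j}^2\le\lambda_I\log^+\mu_i$. The cross terms $\int\langle\nabla v_i,\nabla w_k\rangle$ are at most $C\sqrt{\log^+\mu_k}$ after integrating by parts against the smooth $\psi_j$, since $v_i-\overline{v_i}$ is bounded in $L^1$. Finally $\beta\int Q_0$ contributes $\frac{\beta}{4}\int(w_1^2+w_2^2-w_1w_2)+O(\sqrt{\log\Lambda})$, because $v_i-\overline{v_i}$ is bounded in $L^2$. Combining, the $w$-terms give $\frac14\log^+\mu_i\sum_j(\lambda_j+\beta)X_{i,j}^2\le -c\log^+\mu_i$ with $c>0$, since $\beta<-\lambda_I$. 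The mixed term $-w_1w_2$ is bounded by Cauchy--Schwarz in a way that keeps the quadratic form negative definite.

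\textbf{Step 2 (logarithmic terms).} $\cQ\subset\intsigma$ lies away from $\partial\Sigma$, so $V_i\ge c>0$ near $\bga_i$. Concentration of $e^{v_1}$ at scale $\lambda_1^{-1}$ on a boundary half-disk gives
\[
\log\int V_1e^{\varphi_1}\ge \log\int_{U_{1/\lambda_1}(x^1_j)}e^{v_1-\frac12 v_2+w_1-\frac12w_2}-\overline{v_1}+\tfrac12\overline{v_2}.
\]
Here $v_2=O(1)$ near $\bga_1$, and $w_i$ is bounded by $C\sqrt{\log^+\mu_i}$. We also use $\overline{v_i}=-4\log^+\lambda_i+O(1)$. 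This yields $\log\int V_1e^{\varphi_1}\ge 2\log^+\lambda_1-\log^+\lambda_2-C\sqrt{\log\Lambda}-C$, and symmetrically for $i=2$. Hence
\[
J\le(4\pi-2\rho_1+\rho_2)\log^+\lambda_1+(4\pi-2\rho_2+\rho_1)\log^+\lambda_2-c\sum_i\log^+\mu_i+C\sqrt{\log\Lambda}+C.
\]

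\textbf{Step 3 (the obstacle: the leading coefficient).} The expression in Step 2 does not give negativity directly, because of the coupling sign $+\rho_2$. The real work is a sharper bookkeeping of the interaction. The right accounting is to keep $\varphi_i$ with the full term $\frac12 v_j$ inside the integral and to use $\int Q$ exactly, so that the Dirichlet part is $4\pi(\log^+\lambda_1+\log^+\lambda_2)$. The cross term then cancels against the $\mp\frac12\overline{v_j}$ appearing in both logarithms. The net bound becomes
\[
J\le (8\pi-4\rho_1)\log^+\lambda_1+(8\pi-4\rho_2)\log^+\lambda_2-c\sum_i\log^+\mu_i+o(\log\Lambda)
\]
(up to the correct normalising constants, matching the boundary threshold $2\pi$ per point with $K_i$ points). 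The hypothesis $\rho_i>2\pi K_i$, via the $K_i$-point barycenter count, makes each coefficient strictly negative whenever $K_i\ge1$. If $K_i=0$ then $v_i\equiv0$ and that term is absent. Since some $s_k\ge1/4$ on the join, the corresponding $\log^+$ is at least $\log\Lambda-C$. We conclude $J\to-\infty$ uniformly in $\zeta$. I expect this step to need the most care: the exact cancellation of the $v_1$--$v_2$ interaction, and the uniformity near the degenerate faces of the join, where some $t^i_j$, $s_k$ or the points $x^i_j$ collide. There I would use the standard estimate $\log\int e^{v_i}\ge -C$ from the convexity of $\log$ applied to $\max_j t^i_j\ge 1/K_i$.
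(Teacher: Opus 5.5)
There is a genuine gap. Your decomposition matches the paper's: Dirichlet part, $\beta Q_0$ part, logarithmic terms, negative definiteness of the $w$-block for $\beta<-\lambda_I$, and at least one $s_k\ge 1/4$. However, the estimate that actually carries the lemma is wrong in Step~2, and Step~3 does not repair it. In your notation, $\lambda_i=s_{2i}\Lambda$ and $\mu_i=s_{2i-1}\Lambda$.

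\textbf{The false claim in Step~2.} It is not true that $v_2=O(1)$ near $\bga_1$. At distance $\ge d_0/2$ from $\bga_2$ one has $v_2=\log\sum_j t^2_j\bigl(1+\lambda_2^2d_g^{\,2}(\cdot,x^2_j)\bigr)^{-2}=-4\log^+\lambda_2+O(1)$. The missing observation is that the same computation, via an $L^2$ bound on $v_2+4\log^+\lambda_2$, also gives $\overline{v_2}=-4\log^+\lambda_2+O(1)$. Hence $v_2-\overline{v_2}=O(1)$ uniformly on a fixed neighbourhood $U$ of $\bga_1$. On $U$ you then have $\varphi_1=(v_1-\overline{v_1})+w_1-\tfrac12 w_2+O(1)$. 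Since $V_1\ge c>0$ on $U$ and $\int_U e^{v_1}\,dv_g\ge c\min\{1,\lambda_1^{-2}\}$, this gives
\[
\log\int_\Sigma V_1e^{\varphi_1}\,dv_g\ \ge\ -\overline{v_1}-2\log^+\lambda_1-C\sqrt{\log^+\mu_1}-C\sqrt{\log^+\mu_2}-C\ \ge\ 2\log^+\lambda_1-C\sqrt{\log^+\mu_1}-C\sqrt{\log^+\mu_2}-C,
\]
with no dependence on $\lambda_2$. The positive coupling coefficient in Step~2, and the ``exact cancellation of the $v_1$--$v_2$ interaction'' you defer to Step~3, are artifacts of that slip. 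The Dirichlet cross term $\int_\Sigma\langle\nabla v_1,\nabla v_2\rangle_g\,dv_g$ is simply $O(1)$ and takes part in no cancellation.

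\textbf{The wrong constant in Step~1.} The bound $\int_\Sigma|\nabla v_i|_g^2\,dv_g\le 16\pi\log^+\lambda_i+C$ is false for $K_i\ge 2$: each of $K_i$ well-separated half-bubbles with positive weight contributes about $16\pi\log\lambda_i$. The correct bound is $16\pi K_i(1+o(1))\log^+\lambda_i+C$. Combining it with the estimate above yields
\[
J_{\beta,\rho}(\varphi_1,\varphi_2)\le\sum_{i=1}^2\bigl(4\pi K_i-2\rho_i+o(1)\bigr)\log^+\lambda_i-c\sum_{i=1}^2\log^+\mu_i+O\Bigl(\sum_{i=1}^2\sqrt{\log^+\mu_i}+1\Bigr),
\]
and this is exactly where $\rho_i>2\pi K_i$ is used. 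Your displayed coefficient $8\pi-4\rho_i$ is not correct. For instance, with $s_2=1$ and the $x^1_j$ well separated with equal weights, $J_{\beta,\rho}$ really behaves like $(4\pi K_1-2\rho_1)\log\Lambda$, which is strictly larger. Writing ``up to the correct normalising constants'' at the only place where the hypothesis on $\rho$ enters leaves the lemma unproved.

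\textbf{Two smaller fixes.}
\begin{itemize}
\item The bound $|\nabla v_i|_g\le C/d_g(\cdot,\bga_i)$ is not integrable across the curve. For the mixed Dirichlet term, use $|\nabla v_i|_g\le 4/\min_j d_g(\cdot,x^i_j)$, which gives $\|\nabla v_i\|_{L^1(\Sigma)}\le C$.
\item The estimate you quote at the end should read $\log\int_\Sigma V_ie^{v_i}\,dv_g\ge -2\log^+\lambda_i-C$. This needs only $\sum_j t^i_j=1$, not $\max_j t^i_j\ge 1/K_i$.
\end{itemize}
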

	\begin{proof}
		Let $\lambda_{11}=s_1\Lambda, \lambda_{21}=s_2\Lambda,\lambda_{12}= s_3\Lambda,\lambda_{22}= s_4\Lambda$. 
		A standard computation, see \cite{battaglia_jevnikar_ruiz2015}, yields 
		\begin{eqnarray}
			\left|\int_{\Sigma} \lan \nabla v_1,\nabla v_2\ran_g dv_g \right|&\leq& C,\label{eq: est_mix_v12}
					\end{eqnarray}
			since		$x^i_j \in \gamma_i$ and the curves are disjoint. Moreover, 
					\begin{eqnarray}
			\int_{\Sigma} |\nabla v_i|_g^2 dv_g&\leq& \sum_{j=1}^{K_i} 16 \pi (1+o(1))\log^+(\lambda_{2i}) + C \label{eq:est_int_nabla_vi}
		\end{eqnarray}
		where  we  observe that one has half of the Dirichlet energy contribution due to the fact that  $x^i_j \in \gamma_i \subset \partial\Sigma$. 
		We claim that
						\begin{eqnarray}
			\left|	\int_{\Sigma} \lan \nabla v_i,\nabla w_{i'}\ran_g dv_g\right|&\leq& C \sqrt{\log^+(\lambda_{1i'})}, \label{eq:est_mix_w_vi}\\
			\left|\int_{\Sigma}\lan \nabla w_i,\nabla w_{i'}\ran_g dv_g \right| &\leq&\sqrt{\log^+(\lambda_{1i})\log^+(\lambda_{1i'})}. \label{eq:int_nabla_w_i} 
		\end{eqnarray}
		To obtain~\eqref{eq:est_mix_w_vi}, we  first calculate the average of $v_i$ over $\Sigma$. 
		We just consider the case $\lambda_{2i}\neq 0$ here; otherwise $v_i\equiv0.$ 		
        By direct calculation and Cauchy-Schwarz inequality, we have 
		\begin{equation}\label{eq:est_v_i}
			\begin{split}
				&\int_{\Sigma} |v_i+4\log^+(\lambda_{2i})|^2 dv_g 
				= \int_{\Sigma} \left(  \log\Big( \sum_{j=1}^{K_i}t^i_j  \frac{\max\{1, \lambda_{2i}^4\} }{(1+ \lambda_{2i}^2d_g^{\,2}(\cdot,x_j^i))^2}\Big)\right)^2dv_g \leq C,\\
                &\left|\int_\Sigma v_i+4\log^+(\lambda_{2i})dv_g\right| \le C
			\end{split}
		\end{equation}
		where $C>0$ is a constant.
        Since $\int_\Sigma (v_i-\overline{v}_i)^2 dv_g=\min_{a\in\R} \int_\Sigma (v_i-a)^2 dv_g$, the above estimates imply that 
        \begin{equation}
			\label{eq:bar_vi}\overline{v}_i=  -4\log^+(\lambda_{2i})+\mathcal{O}(1),
		\end{equation}
		uniformly for $\lambda_{2i}\ge 0$. 
        
		By straightforward calculation,  in view of $\|\psi_j\|^2 =\lambda_j \|\psi_j\|_{L^2(\Sigma)}^2= 1$ and $\sum_{j=1}^{I} X^2_{ij}=1$ for $i=1,2$,  it holds that 
		\begin{eqnarray*}
			&&\int_{\Sigma}\lan \nabla v_i, \nabla w_{i'}\ran_g  dv_g =\sqrt{\log^+(\lambda_{1i'})} \sum_{j=1}^{I} X_{i'j} \int_{\Sigma} \lan \nabla v_i, \nabla \psi_j\ran_g dv_g \\
			&=& \sqrt{\log^+(\lambda_{1i'})} \sum_{j=1}^{I} X_{i'j}\lambda_j \int_{\Sigma} (v_i-\overline{v}_i)\psi_j dv_g\\
			&\leq & C \sqrt{\log^+(\lambda_{1i'})} \sum_{j=1}^{I} X_{i'j}\lambda_j
			\| v_i-\overline{v}_i\|_{L^2(\Sigma)}\|\psi_j\|_{L^2(\Sigma)}\\
			&\leq&C \sqrt{\log^+(\lambda_{1i'})}\| v_i-\overline{v}_i\|_{L^2(\Sigma)}\stackrel{\eqref{eq:est_v_i}}{\leq} C\sqrt{\log^+(\lambda_{1i'})},
		\end{eqnarray*}
		for some constant $C>0.$

        In view of $\|\psi_j\|=1$ and $\sum_{j=1}^I X_{ij}^2=1$ for $i=1,2$, by Cauchy-Schwarz inequality, we derive that 
		\begin{equation*}
			\begin{split}
				|\lan w_i, w_{i'}\ran| 
				&\leq  \sqrt{\log^+(\lambda_{1i})\log^+(\lambda_{1i'})}\Big( \sum_{j=1}^{I} X_{ij}^2 \int_{\Sigma}\left|\nabla \psi_j\right|_g^2 dv_g\Big)^{\frac 12 } \Big( \sum_{j=1}^{I} X_{i'j}^2 \int_{\Sigma}\left|\nabla \psi_j\right|_g^2 dv_g\Big)^{\frac 12 }\\
				&\leq\sqrt{\log^+(\lambda_{1i})\log^+(\lambda_{1i'})}.
			\end{split}
		\end{equation*}
		
		Applying the estimates~\eqref{eq: est_mix_v12}-\eqref{eq:int_nabla_w_i}, 
		we obtain
		\begin{eqnarray}\label{eq:int_Q}
			&&\int_{\Sigma} Q(\varphi_1,\varphi_2) dv_g \leq \sum_{i=1}^2 \sum_{j=1}^{K_i} 4\pi  (1+o(1))\log^+(\lambda_{2i}) \\
			&&+\frac 1 4 \int_{\Sigma}\left(\sum_{i=1}^2|\nabla w_i|_g^2-\lan\nabla w_1,\nabla w_2\ran_g\right)dv_g+C\sum_{i=1}^2\sqrt{\log^+(\lambda_{1i})},\nonumber
		\end{eqnarray}
		where $\varphi=(\varphi_1,\varphi_2)$ given by \eqref{def:zeta}.
       By the Poincar\'e inequality and \eqref{eq:int_nabla_w_i}, we have $\|w_i\|_{L^2(\Sigma)} \le C \sqrt{\log^+(\lambda_{1i})}$.
       Hence, by the definition of $Q_0$ and \eqref{eq:est_v_i}, we obtain
		\begin{eqnarray}\label{eq:int_beta_varphi}
			\beta \int_{\Sigma} Q_0(\varphi_1,\varphi_2) dv_g 
			=   \frac{\beta}{4}\int_{\Sigma}\left (\sum_{i=1}^2|w_i|^2-w_1w_2\right) dv_g + \mathcal{O}\left(\sum_{i=1}^2\sqrt{\log^+(\lambda_{1i})}+1\right).
		\end{eqnarray}
        Then, by the definition of $w_i$, we have
        \begin{equation}\begin{aligned}\label{eq:int_quadratic}
			&\int_{\Sigma}\Big(\sum_{i=1}^2(|\nabla w_i|_g^2+\beta |w_i|^2)-\lan\nabla w_1,\nabla w_2\ran_g -\beta w_1w_2\Big) dv_g \\
            =& \sum_{j=1}^I \left( 1+\frac{\beta}{\lambda_j}\right) \Big(\sum_{i=1}^2 \log^+(\lambda_{1i})X_{ij}^2 -\sqrt{\log^+(\lambda_{11})\log^+(\lambda_{12})}X_{1j}X_{2j}\Big).
		\end{aligned}\end{equation}
        Combining \eqref{eq:int_Q}, \eqref{eq:int_beta_varphi}, \eqref{eq:int_quadratic}, and using  the inequality $a^2+b^2-ab \geq \frac  1 2(a^2+b^2)$, we obtain 
		\begin{equation}\begin{aligned}\label{eq:Drich_0}
			&\int_{\Sigma} Q(\varphi_1,\varphi_2) + \beta Q_0(\varphi_1,\varphi_2) dv_g\\
			&\leq   \sum_{i=1}^2 \sum_{j=1}^{K_i} 4\pi  (1+o(1))\log^+(\lambda_{2i})
            -\frac18 \sum_{j=1}^I \left(\frac{-\beta}{\lambda_j}-1\right) \sum_{i=1}^2 X_{ij}^2\log^+(\lambda_{1i}) \\
			&+C\sum_{i=1}^2\sqrt{\log^+(\lambda_{1i})}+C.
		\end{aligned}\end{equation}

		It remains to analyze the exponential term. We observe that
		$\|w_i\|_{L^{\infty}(\Sigma)}\leq C\|w_i\|_{L^2(\Sigma)}\leq C\sqrt{\log^+(\lambda_{1i})},$
		as $w_i$ lies in a finite-dimensional subspace spanned by $\{ \psi_j  \}_{j=1}^I$. 
		Moreover,
		$-\overline{v}_i= 4\log^+(\lambda_{2i}) +\mathcal{O}(1)$. 
By standard computations (\cite{battaglia_jevnikar_ruiz2015}), for all $\lambda_{2i} \ge 0$,
        \[
            \log \int_{\Sigma} V_i e^{v_i} dv_g \ge -2\log^+(\lambda_{2i})-C.
        \]
		and it follows that 
		\begin{equation}\begin{aligned}\label{eq:exponential}
			\sum_{i=1}^2\rho_i\log \int_{\Sigma} V_i e^{\varphi_i}&\geq  -\sum_{i=1}^2 \rho_i\bar{v}_i-2\sum_{i=1}^2\rho_i\log^+(\lambda_{2i})-C\sum_{i=1}^2\sqrt{ \log^+(\lambda_{1i})} -C\\
			&\geq  2\sum_{i=1}^2\rho_i\log^+(\lambda_{2i})
			-C\sum_{i=1}^2\sqrt{ \log^+(\lambda_{1i})}-C,
		\end{aligned}\end{equation}
		for some constant $C>0.$

        Combining~\eqref{eq:Drich_0} and \eqref{eq:exponential}, we deduce that 
        \begin{equation}\label{eq:energy0}
            \begin{split}
				J_{\beta,\rho}(\varphi_1,\varphi_2)&\leq \sum_{i=1}^2\left(4\pi K_i-2 \rho_i+o(1)\right)\log^+(\lambda_{2i}) -\frac18 \sum_{j=1}^I \left(\frac{-\beta}{\lambda_j}-1\right) \sum_{i=1}^2 X_{ij}^2\log^+(\lambda_{1i})\\
				&+\mathcal{O}\left( \sum_{i=1}^2\sqrt{\log^+(\lambda_{1i})}+1\right).
			\end{split}
        \end{equation}
        If $I=0$, then $\lambda_{11}=\lambda_{12}=0$ and $\lambda_{21}+\lambda_{22}=\Lambda$.
        Since $4\pi K_i-2 \rho_i<0$, $i=1,2$, the right-hand side of \eqref{eq:energy0} tends to $-\infty$ as $\Lambda \to +\infty$.
        
        If $I>0$, then $-\frac{\beta}{\lambda_j}-1 \ge -\frac{\beta}{\lambda_I}-1>0$ for $j=1,...,I$.
        Since $\sum_{j=1}^I X_{ij}^2=1$, there exists $c>0$ such that
        \[
            J_{\beta,\rho}(\varphi_1,\varphi_2) \le -c\sum_{i=1}^2 \big( \log^+(\lambda_{1i})+\log^+(\lambda_{2i}) \big) + \mathcal{O}(\sqrt{\log\Lambda}).
        \]
        Since $\sum_{i,i'=1}^2 \lambda_{ii'}=\Lambda$, the right-hand side of \eqref{eq:energy0} tends to $-\infty$.
	\end{proof}



We now introduce a two-scale version of the test functions in Lemma \ref{lemma:map}. 
The barycenter components will depend on a parameter $\Lambda$, while the negative eigenspace components will be fixed at a scale $\Lambda_0$.


\begin{proposition}\label{prop:map}
Let $K_1,K_2,I \in \N_0$ with $K_1+K_2+I>0$. Assume that $\rho=(\rho_1,\rho_2) \in \R^2_{>0}\setminus \Gamma$, $\rho_i \in  (2\pi K_i, 2\pi(K_i+1))$, $i=1,2$, and $\beta\in (-\lambda_{I+1},-\lambda_{I})$.
Then, for every $L>0$, there exists $\Lambda_0^*=\Lambda_0^*(L)>0$ such that, for every fixed $\Lambda_0 \ge \Lambda_0^*$ and every $\Lambda \ge \Lambda_0$, 
the mapping $\Phi^\Lambda:(\bga_1)_{\partial,K_1,I}\star (\bga_2)_{\partial, K_2,I}\rightarrow \oH\times\oH$ defined by
\begin{equation*}
\zeta=(X_1, \sigma_1, X_2, \sigma_2; s)\mapsto\left(\begin{array}{c}
	\Phi_1^\Lambda \\
	\Phi_2^\Lambda
\end{array}\right):=\left(\begin{array}{c}
	(v_1^\Lambda-\overline{v_1^\Lambda} + w_1^{\Lambda_0})-\frac 1 2 (v_2^\Lambda-\overline{v_2^\Lambda}+w_2^{\Lambda_0} )\\
	(v_2^\Lambda-\overline{v_2^\Lambda}+w_2^{\Lambda_0})-\frac 1 2 (v_1^\Lambda-\overline{v_1^\Lambda} + w_1^{\Lambda_0})
\end{array}\right).
\end{equation*}
satisfies $\Phi^\Lambda((\bga_1)_{\partial,K_1,I}\star (\bga_2)_{\partial, K_2,I}) \subset J_{\beta,\rho}^{-L}$.
\end{proposition}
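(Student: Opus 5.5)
We follow the computation in the proof of Lemma \ref{lemma:map} step by step. The only new feature is that $w_i$ is frozen at scale $\Lambda_0$, so $\lambda_{1i}$ is replaced by $\lambda^0_{1i}:=s_{2i-1}\Lambda_0$, while $\lambda_{2i}=s_{2i}\Lambda$ still involves $\Lambda$. None of the estimates \eqref{eq: est_mix_v12}--\eqref{eq:exponential} used any relation between the scale of $w_i$ and that of $v_i$; each holds with a constant independent of the scales. Hence the same chain of inequalities should give, uniformly in $\zeta$,
\begin{equation*}
J_{\beta,\rho}(\Phi^\Lambda)\le \sum_{i=1}^2\bigl(4\pi K_i-2\rho_i+o(1)\bigr)\log^+(s_{2i}\Lambda)-c\sum_{i=1}^2\log^+(s_{2i-1}\Lambda_0)+C\sum_{i=1}^2\sqrt{\log^+(s_{2i-1}\Lambda_0)}+C,
\end{equation*}
where $c>0$ comes from $-\beta/\lambda_I-1>0$ and $\sum_j X_{ij}^2=1$ (the $w$-term is absent if $I=0$). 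The $o(1)$ in \eqref{eq:est_int_nabla_vi} is as $s_{2i}\Lambda\to\infty$, and it is bounded for all values. So we rewrite the first sum as $-c'\log^+(s_{2i}\Lambda)+C$ with $c'=\min_i(2\rho_i-4\pi K_i)/2>0$, absorbing the bounded range into $C$.

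Set $\eta(t):=\min\{c t - C\sqrt t\}$-type lower function, i.e. use $ct-C\sqrt t\ge \frac c2 t - C'$. This gives
\begin{equation*}
J_{\beta,\rho}(\Phi^\Lambda)\le -c''\sum_{i=1}^2\bigl(\log^+(s_{2i}\Lambda)+\log^+(s_{2i-1}\Lambda_0)\bigr)+C_*,
\end{equation*}
with $C_*$ independent of $\Lambda,\Lambda_0,\zeta$. Since $\Lambda\ge\Lambda_0$, each term satisfies $\log^+(s_{2i}\Lambda)\ge\log^+(s_{2i}\Lambda_0)$. Because $\sum_k s_k=1$, some $s_k\ge1/4$. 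Therefore the sum is at least $\log(\Lambda_0/4)$ once $\Lambda_0\ge4$. Choosing $\Lambda_0^*$ with $-c''\log(\Lambda_0^*/4)+C_*\le -L$ gives the claim for all $\Lambda\ge\Lambda_0\ge\Lambda_0^*$.

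In the degenerate cases the argument simplifies. If $K_i=0$ then $v_i\equiv0$ and the corresponding term is absent. If $I=0$ the $\Lambda_0$-terms are absent, and we use $\max_i s_{2i}\ge1/2$ together with $\Lambda\ge\Lambda_0$.

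The main point to check is that the constants in \eqref{eq:est_mix_w_vi}, \eqref{eq:exponential} and the $\mathcal{O}(1)$ in \eqref{eq:bar_vi} are genuinely uniform in both scales separately. The bound $\|w_i\|_{L^\infty}\le C\sqrt{\log^+\lambda^0_{1i}}$ only involves $\Lambda_0$. The cross term \eqref{eq:est_mix_w_vi} only uses $\|v_i-\overline{v}_i\|_{L^2}\le C$, which is uniform in $\lambda_{2i}$. So this uniformity should hold.
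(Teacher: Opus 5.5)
Your argument is correct, but it is organized differently from the paper's proof. You reopen the proof of Lemma~\ref{lemma:map}. You check that every constant there is uniform in the two scales $\lambda_{1i}=s_{2i-1}\Lambda_0$ and $\lambda_{2i}=s_{2i}\Lambda$ separately. You then conclude using $\Lambda\ge\Lambda_0$ and $\max_k s_k\ge 1/4$.

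The paper never reopens the estimates. It observes that the two-scale map is literally an instance of the one-scale map: $\Phi^\Lambda(\zeta)=\varphi^{\Lambda'}(X_1,\sigma_1,X_2,\sigma_2;s')$, where $\Lambda':=\Lambda_0(s_1+s_3)+\Lambda(s_2+s_4)$ and $s':=\frac{1}{\Lambda'}(\Lambda_0 s_1,\Lambda s_2,\Lambda_0 s_3,\Lambda s_4)$. This works because $s'_k\Lambda'$ reproduces exactly the scales $\Lambda_0 s_{2i-1}$ and $\Lambda s_{2i}$. Moreover, $s'\in\Delta^3$ has the same zero pattern as $s$, so $(X_1,\sigma_1,X_2,\sigma_2;s')$ is a legitimate point of the join, and $\Lambda'\ge\Lambda_0$. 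The uniform-in-$\zeta$ conclusion of Lemma~\ref{lemma:map} then applies verbatim.

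What each route buys:
\begin{itemize}
\item The paper's route is a two-line reduction. Uniformity over the simplex coordinate already encodes the separate-scale uniformity that you verify by hand.
\item Your route is longer and depends on the internal structure of that proof. In return it gives an explicit bound $J_{\beta,\rho}(\Phi^\Lambda)\le -c''\log(\Lambda_0/4)+C_*$, hence an explicit threshold $\Lambda_0^*(L)$. It also makes transparent that only $\Lambda_0$, not $\Lambda$, needs to be large.
\end{itemize}
Your final lower bound $\sum_{i}\bigl(\log^+(s_{2i}\Lambda)+\log^+(s_{2i-1}\Lambda_0)\bigr)\ge\log(\Lambda_0/4)$ plays exactly the role of the paper's inequality $\Lambda'\ge\Lambda_0$.
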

\begin{proof}
By Lemma \ref{lemma:map}, for every $L >0$, there exists $\Lambda_0^*(L)$ such that $J_{\beta,\rho}(\varphi_1^{\Lambda'}(\zeta),\varphi_2^{\Lambda'}(\zeta)) \le -L$ for every $\Lambda' \ge \Lambda_0^*(L)$ and every $\zeta' \in (\bga_1)_{\partial,K_1,I}\star (\bga_2)_{\partial, K_2,I}$.

Fix $\Lambda_0 \ge \Lambda_0^*(L)$ and $\Lambda \ge \Lambda_0$.
For $\zeta=(X_1,\sigma_1,X_2,\sigma_2;s)$, set 
\[
\Lambda':=\Lambda_0(s_1+s_3)+\Lambda(s_2+s_4), \quad s':=\frac{1}{\Lambda'}(\Lambda_0 s_1,\Lambda s_2, \Lambda_0 s_3, \Lambda s_4).
\]
Then $s' \in \Delta^3$ and $\Lambda' \ge \Lambda_0 \ge \Lambda_0^*(L)$.
By construction, $\Phi_i^\Lambda(\zeta)=\varphi_i^{\Lambda'}(X_1,\sigma_1,X_2,\sigma_2;s')$, $i=1,2$.
Hence, by Lemma \ref{lemma:map}, $J_{\beta,\rho}(\Phi_1^{\Lambda}(\zeta),\Phi_2^{\Lambda}(\zeta)) \le -L$ uniformly in $\zeta$.  This completes the proof.
\end{proof}

In order to construct a map from the sublevel set $J^{-L}_{\rho,\beta}$ to $(\bga_1)_{\partial, K_1,I}\star (\bga_2)_{\partial, K_2,I}$, we have to introduce an improved Moser-Trudinger inequality.

Firstly, we introduce a local version of Moser-Trudinger inequality, which in the singular setting is also known as Troyanov inequality. Here we use the fact that we have positive weights. 
\begin{lemma}\label{lem:local_MT}
Let $\delta>0$ be sufficiently small. Then: 
\begin{itemize}
\item if  $\Omega_1 \subseteq \Omega_2 \subseteq \Sigma$ is such that $ d_g(\Omega_2, \partial\Sigma)\geq \frac \delta 2 $ and  $d_g\left(\Omega_1, \partial \Omega_2\right) \geq$ $\delta$, then for any $\varepsilon>0$ there exists a constant $C=C(\varepsilon, \delta)$ such that for any $ u=(u_1,u_2)\in\oH\times\oH,$
\begin{equation}
	\label{eq:MT_loc}
	\begin{gathered}
		4 \pi\sum_{i=1}^2 \log \int_{\Omega_1} V_ie^{u_i-\dashint_{\Omega_2} u_i} d v_g \leq 
		\int_{\Omega_2} Q\left(u_1, u_2\right) d v_g+\varepsilon \int_{\Sigma}\sum_{i=1}^2\left|\nabla u_i\right|_g^2 d v_g+C;
	\end{gathered}
\end{equation}
\item  if $\Omega_1\subset \Omega_2 \subset \Sigma$ with $ \Omega_1 \cap \partial\Sigma\neq \emptyset$ and $d_g( \Omega_1\setminus\partial\Sigma, \partial\Omega_2\setminus \partial\Sigma)\geq \delta$, then
for any $\varepsilon>0$ there exists a constant $C=C(\varepsilon, \delta)$ such that for any $ u=(u_1,u_2)\in\oH\times\oH,$
\begin{equation*}
	\begin{split}
		2 \pi\sum_{i=1}^2 \log \int_{\Omega_1} V_ie^{u_i-\dashint_{\Omega_2} u_i} d v_g \leq 
		\int_{\Omega_2} Q\left(u_1, u_2\right) d v_g+\varepsilon \int_{\Sigma}\sum_{i=1}^2\left|\nabla u_i\right|_g^2 d v_g+C .
	\end{split}
\end{equation*}
\end{itemize}
\end{lemma}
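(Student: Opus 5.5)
The plan is to reduce both inequalities to the sharp Moser--Trudinger inequality for the Toda system in the form of Jost--Wang \cite{jostandwang2001}, $4\pi\sum_i\log\int e^{u_i-\bar u_i}\le\int Q(u_1,u_2)+C$. On surfaces with boundary the analogue is the version with constant $2\pi$, which follows from \eqref{eq:MT-b} and the reflection argument. Localization is done with a cutoff, and the weights $V_i$ are handled through $\gamma_{il}\ge0$.

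First, since $\gamma_{il}\ge 0$, the weights satisfy $V_i\le C$ on $\Sigma$, so $V_i$ can be dropped at the cost of a constant. Fix a cutoff $\chi\in C^\infty(\Sigma)$ with $0\le\chi\le1$, $\chi\equiv1$ on $\Omega_1$, $\operatorname{supp}\chi$ contained in the $\delta/2$-neighborhood of $\Omega_1$ within $\Omega_2$, and $|\nabla\chi|\le C/\delta$. Set $\tilde u_i:=\chi(u_i-\dashint_{\Omega_2}u_i)$.

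In the interior case, $\tilde u_i$ has compact support in the interior at distance at least $\delta/2$ from $\partial\Sigma$. Apply the Jost--Wang inequality on a closed surface or a compactly supported chart, namely $4\pi\sum_i\log\int e^{\tilde u_i}\le\int Q(\tilde u)+4\pi\sum_i\overline{\tilde u_i}+C$. Because $|\overline{\tilde u_i}|\le C\|u_i-\dashint_{\Omega_2}u_i\|_{L^1(\Omega_2)}$, the mean term is controlled by Poincaré on $\Omega_2$. In the boundary case, $\tilde u_i$ vanishes near $\partial\Omega_2\setminus\partial\Sigma$ but not on $\partial\Sigma$. I would cover the support by finitely many boundary isothermal charts, reflect evenly as in Lemma \ref{lem:S_i}, and apply the planar Jost--Wang inequality. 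Reflection doubles both the Dirichlet energy and the exponential integral up to bounded conformal factors, which turns $4\pi$ into $2\pi$. A partition of unity with the standard convexity trick (Hölder on $\log\int\sum_k e^{\tilde u_i}\chi_k$) then glues the charts, with the errors absorbed as below.

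The expansion of $Q(\tilde u)$ is the key computation:
\[
\int Q(\tilde u)\le(1+\varepsilon)\int_{\Omega_2}\chi^2Q(u)+C_\varepsilon\delta^{-2}\sum_i\int_{\Omega_2}\Bigl|u_i-\dashint_{\Omega_2}u_i\Bigr|^2.
\]
Since $\chi\le1$ and $Q$ is positive definite, $\int\chi^2Q(u)\le\int_{\Omega_2}Q(u)$. The $L^2$ term is the main obstacle, because it has no small factor in front of it. To handle it I would split $u_i=u_i^{(1)}+u_i^{(2)}$ into spectral components, where $u_i^{(1)}$ is the projection onto the first $N$ Neumann eigenfunctions $\psi_j$ and $u_i^{(2)}$ is the high-frequency remainder. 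The high-frequency part satisfies $\|u_i^{(2)}\|_{L^2}^2\le\lambda_N^{-1}\|\nabla u_i\|^2$. Choosing $N=N(\varepsilon,\delta)$ with $C_\varepsilon\delta^{-2}\lambda_N^{-1}<\varepsilon$ makes this contribution at most $\varepsilon\int_\Sigma|\nabla u_i|^2$. The low-frequency part $u_i^{(1)}$ lies in a finite-dimensional space, so $\|u_i^{(1)}\|_{L^\infty}\le C_N\|u_i^{(1)}\|_{L^2}\le C_N\|\nabla u_i\|_{L^2}$. It contributes to the exponential only through $\sup|u_i^{(1)}|$. By Young's inequality, $4\pi C_N\|\nabla u_i\|_{L^2}\le\varepsilon\|\nabla u_i\|^2+C$.

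To carry this out, I would apply the cutoff argument to $u^{(2)}$ only. Then $\int_{\Omega_1}e^{u_i-\dashint u_i}\le e^{2\sup|u_i^{(1)}|}\int_{\Omega_1}e^{u_i^{(2)}-\dashint u_i^{(2)}}$. Together with $Q(u^{(2)})\le Q(u)+$ cross terms, where the cross terms are again bounded by $\varepsilon\|\nabla u\|^2+C$ via the finite dimensionality of $u^{(1)}$, this yields both inequalities with $C=C(\varepsilon,\delta)$.
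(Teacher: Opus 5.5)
Your overall plan (cutoff, Jost--Wang, a Fourier truncation to absorb the $L^2$ error, reflection at the boundary) is the strategy behind \cite[Proposition 2.3]{MalchiodiRuiz2013}, which is all the paper invokes. Your expansion of $\int Q(\tilde u)$ is also correct. The gap is in the final step. After cutting off $u^{(2)}$ you need
\[
\int_\Sigma \chi^2 Q(u^{(2)})\,dv_g\le \int_{\Omega_2}Q(u)\,dv_g+\varepsilon\int_\Sigma\sum_{i=1}^2|\nabla u_i|_g^2\,dv_g+C .
\]
Let $B$ be the bilinear form of $Q$. The difference $\int_\Sigma\chi^2\bigl(Q(u^{(1)})-2B(u,u^{(1)})\bigr)dv_g$ is homogeneous of degree two in $u$. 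Finite dimensionality only gives $\|u^{(1)}\|_{C^1}\le C_N\|\nabla u\|_{L^2}$, hence a bound $C_N\|\nabla u\|_{L^2}^2$. Young's inequality absorbs terms linear in $\|\nabla u\|_{L^2}$, such as $\sup|u^{(1)}|$, but not these. Moreover, the orthogonality $\int_\Sigma B(u^{(1)},u^{(2)})\,dv_g=0$ is global and is destroyed by the weight $\chi^2$.

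Concretely, if $u\equiv0$ on $\Omega_2$, then $u^{(2)}=-u^{(1)}$ there. So the left side equals $\int\chi^2Q(u^{(1)})$, while $\int_{\Omega_2}Q(u)=0$. When the spectrum of $u$ straddles $\lambda_N$, this need not be small compared with $\|\nabla u\|^2_{L^2}$, and scaling $u\mapsto tu$ then defeats $\varepsilon t^2\|\nabla u\|^2+C$.

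The repair is to truncate at a spectral gap carrying little energy.
\begin{itemize}
\item Fix $N_1<\dots<N_{K+1}$ depending only on $(\varepsilon,\delta)$, with $\lambda_{N_1}$ and all ratios $\lambda_{N_{k+1}}/\lambda_{N_k}$ large.
\item Depending on $u$, but the same for both components, choose $k$ so that the band $(N_k,N_{k+1}]$ carries at most $K^{-1}\|\nabla u\|_{L^2}^2$, and let $u^{(1)}$ be the modes $\le N_k$.
\item Write $\int\chi^2Q(u^{(2)})=\int\chi^2Q(u)-\int\chi^2Q(u^{(1)})-2\int\chi^2B(u^{(1)},u^{(2)})$. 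The middle term has a good sign, and the band part of the cross term is $O(K^{-1/2})\|\nabla u\|^2_{L^2}$.
\item For the rest of the cross term, integrate by parts against $\chi^2$. The only boundary term lies on $\partial\Sigma$ and vanishes since $\partial_{\nu_g}\psi_j=0$. The result is bounded by $C\bigl(\sqrt{\lambda_{N_k}/\lambda_{N_{k+1}}}+\delta^{-1}\lambda_{N_{k+1}}^{-1/2}\bigr)\|\nabla u\|^2_{L^2}$.
\end{itemize}
Since $k$ ranges over finitely many values, $C=C(\varepsilon,\delta)$. For the use in Lemma~\ref{lem:almost_MT}, the splitting is global and local energies are summed over disjoint sets. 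There, the plain cutoff bound with a $C_{\varepsilon,\delta}\|u\|_{L^2}^2$ remainder would already suffice.

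Two smaller points. First, in the boundary case, gluing several charts by a partition of unity does not work as described for the coupled functional. The Jost--Wang inequality controls $\log\int e^{\tilde u_1}+\log\int e^{\tilde u_2}$ jointly on one domain, the two components may concentrate in different charts, and H\"older-type splittings lose a multiplicative factor in front of the energy. Nor can the scalar inequality \eqref{eq:MT-b} give the coupled form, since $\frac14(|\nabla u_1|^2+|\nabla u_2|^2)\le Q(u_1,u_2)$ fails, e.g., for $\nabla u_2=-\nabla u_1$.

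It is cleaner to reflect once, globally. The double of $\Sigma$ is a closed Riemann surface on which the doubled metric is a bounded conformal multiple of a smooth one. Jost--Wang applied to even extensions, together with conformal invariance of the Dirichlet energy, gives $2\pi\sum_i\log\int_\Sigma e^{f_i-\overline{f_i}}\,dv_g\le\int_\Sigma Q(f)\,dv_g+\varepsilon\int_\Sigma\sum_i|\nabla f_i|_g^2\,dv_g+C$ for all $f\in H^1(\Sigma)^2$. You can apply this directly to $\chi\bigl(u_i^{(2)}-\dashint_{\Omega_2}u_i^{(2)}\bigr)$.

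Second, control the mean term by Poincar\'e on $\Sigma$, not on $\Omega_2$, whose constant is not controlled by $\delta$. Use $\int_{\Omega_2}|u_i-\dashint_{\Omega_2}u_i|\,dv_g\le2\int_\Sigma|u_i|\,dv_g$ together with $\int_\Sigma u_i\,dv_g=0$.
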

The interior case of Lemma  \ref{lem:local_MT} is given by  \cite[Proposition 2.3]{MalchiodiRuiz2013}, while the boundary counterpart can be derived analogously using the Moser-Trudinger inequality \eqref{eq:MT-b}  with suitable cut-off functions. The details are omitted here for brevity.

The following lemma is the so-called  improved Moser-Trudinger inequality in our setting. The case when $I=0$ was proved in \cite{battaglia_jevnikar_ruiz2015}. 
Here, we have to deal with the negative projection $\Pi_I$ and boundary case. 
\begin{lemma}\label{lem:almost_MT}
Let $\delta>0, \theta>0, K_1, K_2 \in \N_0$ and $k_1, k_1^\prime, k_2, k_2^\prime\in \N_0$ with $2k_i+k'_i=K_i$ for $i=1,2.$ Suppose that  $\Omega_{i, j}, \Omega^\prime_{i, j'}
\subset \Sigma$ for 
$i=1,2,j=1, \cdots, k_i, j'=1, \cdots, k_i^\prime$ satisfy 
\begin{equation*}
\left\{\begin{aligned}
	d_g\left(\Omega_{i, j}, \Omega_{i, j^{\prime}}\right) \geq \delta &\quad \forall j, j^{\prime} =1, \cdots, k_i \text { with } j\neq j^{\prime}, \\
	d_g\left(\Omega^{\prime}_{i, j}, \Omega^{\prime}_{i, j^{\prime}}\right) \geq \delta &\quad \forall j, j^{\prime} =1, \cdots, k_i^\prime \text { with } j\neq j^{\prime}, \\
	d_g\left(\Omega_{i, j}, \Omega^\prime_{i, j^{\prime}}\right) \geq \delta &\quad \forall j=1, \cdots, k_i, j^{\prime} =1,\cdots, k_i^\prime, \\
	d_g(\Omega_{i,j}, \partial\Sigma)\geq \frac \delta 2 &\quad \forall i=1,2, j=1,\cdots, k_i,\\
	\Omega^\prime_{i,j^\prime}\cap \partial\Sigma \neq \emptyset & \quad \forall i=1,2, j^\prime= 1, \cdots,k^\prime_i,
\end{aligned}\right.
\end{equation*}
Then, for any $\epsilon>0$, there exists $C$ such that for any $u=\left(u_1, u_2\right) \in\oH \times \oH$ satisfying
$$
\begin{array}{ll}
\int_{\Omega_{i, j}} V_i e^{u_i} d v_g \geq \theta \int_{\Sigma}V_i e^{u_i} d v_g & \forall i\in \{1,2\}, j \in\{1, \cdots, k_i\}, \\
\int_{\Omega^\prime_{i, j}} V_i e^{u_i} d v_g \geq \theta \int_{\Sigma} V_i e^{u_i} d v_g & \forall i\in \{1,2\},  j \in\{1, \cdots,k_i^\prime\}, 	\\
\text{ and }\quad  	\|\Pi_I u_i\|\leq 2 &\forall i\in\{1,2\},
\end{array}
$$
we have 
\begin{equation}\label{eq:inptoved_MT_proj}	\sum_{i=1}^2 2\pi  K_i \log \int_{\Sigma} V_ie^{u_i} dv_g
\leq (1+\epsilon)  \int_{\Sigma}(Q(u_1,u_2)+\beta Q_0(u_1,u_2) )dv_g +C.
\end{equation}
\end{lemma}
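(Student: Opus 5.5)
We split each $u_i$ along the negative eigendirections and then prove a Malchiodi--Ruiz type improved inequality for the remaining part. Write $u_i=u_i^-+u_i^+$ with $u_i^-:=\sum_{j=1}^I\lan u_i,\psi_j\ran_{\oH}\psi_j$ and $u_i^+\perp\mathrm{span}\{\psi_1,\dots,\psi_I\}$. The hypothesis $\|\Pi_I u_i\|\le 2$ and finite dimensionality give $\|u_i^-\|_{L^\infty(\Sigma)}\le C$ and $\|\nabla u_i^-\|_{L^2}\le C$. Replacing $u_i$ by $u_i^+$ therefore changes $\log\int_\Sigma V_ie^{u_i}$ and $\log\int_{\Omega}V_ie^{u_i}$ only by bounded amounts. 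It also changes $\int Q+\beta Q_0$ by at most $C+C\|u^+\|$. Indeed, the cross terms vanish by orthogonality of the eigenfunctions, both for the gradient part and, since the $\psi_j$ are $L^2$-orthogonal as well, for the $Q_0$ part. The mass hypotheses survive with $\theta$ replaced by $\theta e^{-2C}$.

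On $\{\psi_j\}_{j>I}^\perp$ we have $\beta>-\lambda_{I+1}$, so
\[
\int_\Sigma (Q+\beta Q_0)(u^+)\ge c_\beta\int_\Sigma Q(u^+),\qquad c_\beta:=\min\Big\{1,1+\frac{\beta}{\lambda_{I+1}}\Big\}>0.
\]
More precisely, $\int_\Sigma(Q+\beta Q_0)(u^+)=\int_\Sigma Q(u^+)+\beta\int_\Sigma Q_0(u^+)$, and $\int_\Sigma Q_0(u^+)$ is bounded by $\lambda_{I+1}^{-1}\int_\Sigma Q(u^+)$ by the spectral Poincaré inequality on that subspace. This identity is what turns an improved inequality for $Q$ into one for $Q+\beta Q_0$ with only a multiplicative loss.

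The core step is the improved inequality with $\beta$ absent:
\[
\sum_i 2\pi K_i\log\int_\Sigma V_ie^{u_i^+}\le(1+\epsilon)\int_\Sigma Q(u^+)+\beta\int_\Sigma Q_0(u^+)+C,
\]
or rather its version for the quadratic form $Q+\beta Q_0$ on the positive subspace. I would follow \cite{battaglia_jevnikar_ruiz2015,MalchiodiRuiz2013}. Use cut-off functions to localize $u_i^+$ near each $\Omega_{i,j}$ and $\Omega'_{i,j}$, with disjoint supports guaranteed by the $\delta$-separation. Apply Lemma \ref{lem:local_MT} with constant $4\pi$ on the interior sets and $2\pi$ on the boundary sets. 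Summing gives the total weight $4\pi k_i+2\pi k_i'=2\pi K_i$, which matches the left side of \eqref{eq:inptoved_MT_proj}. The mass hypotheses let us replace the local integrals by $\theta\int_\Sigma$ at bounded cost, and the averages $\dashint_{\Omega_2}u_i$ are controlled by the global mean zero plus Poincaré and Sobolev, with an $\varepsilon$-small gradient cost. As in \cite{battaglia_jevnikar_ruiz2015}, the genuinely system part is handling the coupling $\lan\nabla u_1,\nabla u_2\ran$ when the sets for $u_1$ and $u_2$ overlap. There, the localized quadratic forms $\int_{\Omega_2}Q$ over different regions must be recombined into at most $(1+\epsilon)\int_\Sigma Q$. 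This uses the argument that splits $u_i$ into low- and high-frequency (truncated) parts and exploits $\int_{\Sigma}\sum_{\text{regions}}Q\le\int_\Sigma Q$ when the regions are disjoint. I would just quote that combinatorial argument, since its only input is the local inequality, and Lemma \ref{lem:local_MT} supplies it with the correct boundary constant.

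To pass from $Q$ to $Q+\beta Q_0$ in the core step, write $Q=(Q+\beta Q_0)-\beta Q_0$. The $Q_0$ term is not small, so instead I would run the localization directly for the form $Q+\beta Q_0$. Localization only involves the principal part, and the lower-order term $\beta\int Q_0(u^+)$ is compared with $\beta\int Q_0$ of the low-frequency part, absorbed after projecting off finitely many modes as in \cite{Battaglia2018}. This step is the main obstacle. When $\beta<0$, the term $\beta Q_0$ is not bounded by $\epsilon\int Q$ uniformly, and one must use that the high-frequency part satisfies $\int Q_0\le\lambda_N^{-1}\int Q$ with $\lambda_N$ large. The low-frequency part is finite dimensional, so it only shifts the exponentials by bounded amounts times its norm. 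That norm is in turn bounded by a quantity that is at most $\epsilon\int_\Sigma Q(u^+)$ or is absorbed by $c_\beta$. Finally, add back the $u^-$ contributions, using the $O(\|u^+\|)$ cross terms and Young's inequality, to obtain \eqref{eq:inptoved_MT_proj}.
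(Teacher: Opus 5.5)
Your final argument is essentially the paper's. The paper splits $u_i=u_i^1+u_i^2+u_i^3$ into three parts:
\begin{itemize}
\item the $I$ negative modes, bounded via $\|\Pi_I u_i\|\le 2$;
\item the intermediate modes $I<j<N_\varepsilon$, bounded in $L^\infty$ and absorbed through Young's inequality and the positivity of $Q+\beta Q_0$ there;
\item the tail, to which Lemma~\ref{lem:local_MT} is applied with weight $4\pi$ on interior regions and $2\pi$ on boundary regions, and for which $|\beta\int_\Sigma Q_0(u^3)|\le\varepsilon\int_\Sigma Q(u^3)$.
\end{itemize}
This is what your last paragraph describes. As you noticed, the $c_\beta$ comparison alone would only give the constant $(1+\epsilon)/c_\beta$.

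One correction to your account of the quoted combinatorial step. Overlaps between the sets for $u_1$ and $u_2$ are not handled by frequency truncation. They are handled by the covering lemma, which produces a single well-separated family serving both components, so the local system inequality is used once per region. This is combined with Jensen's inequality
\[
\log\int_{\Omega_j}e^{u_1^3-\dashint_{\tilde\Omega_j}u_1^3}\,dv_g\ge\log\bar\theta-\varepsilon\int_{\tilde\Omega_j}|\nabla u_1^3|_g^2\,dv_g-C_\varepsilon
\]
on regions where only one component carries mass.
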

\begin{proof}
We only need to consider the case $I>0$. Assume first that $K_1+K_2>0$.
We will consider the estimate by dividing $u_i=u^1_i+u^2_i+u^3_i$ into three parts by truncation in Fourier models:
\begin{eqnarray*}
u^1_i:=\sum_{j=1}^I \lan u_i, \psi_j\ran_{\oH}\psi_j,\quad
u^2_i:=\sum_{j=I+1}^{N_{\varepsilon}-1} \lan u_i, \psi_j\ran_{\oH}\psi_j, \quad  
u^3_i: =\sum_{j=N_{\varepsilon}}^{+\infty} \lan u_i, \psi_j\ran_{\oH}\psi_j,
\end{eqnarray*}
where $N_{\varepsilon}$ sufficiently large will be chosen later. 
By a covering lemma (see \cite[Lemma 3.2]{Malchiodi2007SomeER}, \cite[Lemma 2.4]{MalchiodiRuiz2013} and \cite[Lemma~4.1]{battaglia_jevnikar_ruiz2015})  there exist constants  $\bar{\delta}>0$ and $ \bar{\theta}>0$, independent of $u_i$, and families $\{\Omega_n\}_{n=1}^k$, $\{\Omega^\prime_{n'}\}_{n'=1}^{k'}$ with $k=\max\{ k_1, k_2\}$ and $k^\prime=\max\{k_1^\prime, k_2^\prime\}$ such that
\begin{equation*}
\left\{\begin{aligned}
	d_g\left(\Omega_n, \Omega_{n^{\prime}}\right) \geq 2 \bar{\delta} &&\quad \forall n, n^{\prime} =1, \cdots, k\text { with } n \neq n^{\prime},\\
	d_g\left(\Omega^\prime_n, \Omega^\prime_{n^{\prime}}\right) \geq 2 \bar{\delta} &&\quad \forall n, n^{\prime} =1, \cdots, k^\prime\text { with } n \neq n^{\prime},\\
	d_g\left(\Omega_n, \Omega^\prime_{n^{\prime}}\right) \geq 2 \bar{\delta} &&\quad \forall n =1, \cdots, k, n'=
	1, \cdots, k'	\\
	d_g(\Omega_n, \partial\Sigma)\geq \bar{\delta}&&\quad \forall n=1, \cdots, k, \\
	\Omega^\prime_n \cap  \partial\Sigma \neq\emptyset && \quad \forall n=1, \cdots, k^\prime, 
\end{aligned}\right.
\end{equation*}
and \begin{equation*}
\begin{split}
	\left|\Omega_j\right|	\geq \bar{\theta} \quad \forall j =1, \cdots, k; & \qquad  |\Omega^\prime_{j'}| \geq \bar{\theta} \quad \forall  j'=1, \cdots, k^\prime; \\
	\frac{\int_{\Omega_j} V_i e^{u_i} d v_g}{\int_{\Sigma}V_i e^{u_i} d v_g}  \geq \bar{\theta} \quad \forall  i=1,2, j=1, \cdots, k; & \qquad \frac{\int_{\Omega^{\prime}_{j'}} V_i e^{u_i} d v_g}{\int_{\Sigma} V_i e^{u_i} d v_g} \quad \forall i=1,2, j'=1,\cdots, k^\prime.
\end{split}
\end{equation*}

Assume that $k_2\geq k_1.$
For $j=1,\cdots,k_1$, by applying Lemma \ref{lem:local_MT} for $u^3_i$ with $\Omega_1=\Omega_{j}$ and $\Omega_2=\tilde{\Omega}_{j}:=U_{\frac 1 2 {\bar{\delta}}}(\Omega_{j}):= \{x\in \Sigma: d_g(\{x\}, \Omega_{j})<\frac{\bar{\delta}}{2}  \}$, we have that, for $i=1,2$,
\begin{eqnarray}
&&\sum_{i=1}^2\log \int_{\Sigma} V_ie^{u_i} dv_g  \leq	\sum_{i=1}^2\log \int_{\Omega_{j}} V_ie^{u_i} dv_g+ 2 \log \Big(\frac 1 {\bar{\theta}}\Big)\label{eq:int2vew}\\
&\leq& 
\sum_{i=1}^2 \left( \log \|V_i\|_{L^{\infty}(\Sigma)}+ \|u^1_i\|_{L^{\infty}(\Sigma)}+\|u^2_i\|_{L^{\infty}(\Sigma)} + \Big|\frac 1 {|\tilde{\Omega}_{j}|_g} \int_{\tilde{\Omega}_{j}} u^3_i dv_g \Big| \right) \nonumber\\
&&+ \sum_{i=1}^2  \log \int_{\Omega_{j}} e^{u^3_i-\dashint_{\tilde{\Omega}_{j}} u^3_i} dv_g+2\log\Big(\frac 1 {\bar{\theta}}\Big)\nonumber\\
&\leq&  \sum_{i=1}^2 \left( \|u^1_i\|_{L^{\infty}(\Sigma)}+\|u^2_i\|_{L^{\infty}(\Sigma)} + \Big|\frac 1 {|\tilde{\Omega}_{j}|_g} \int_{\tilde{\Omega}_{j}} u^3_i dv_g \Big| \right) \nonumber\\
&&+\frac 1 {4\pi }\int_{\tilde{\Omega}_j}Q(u_1^3,u_2^3) dv_g +\frac{\varepsilon}{4\pi} \sum_{i=1}^2 \int_{\Sigma}|\nabla u^3_i|^2_g dv_g  +2\log\left(\frac 1 {\bar{\theta}}\right)+C.\nonumber
\end{eqnarray}
Since $u^1_i$ and $u^2_i$ are in finite-dimensional spaces, all norms in finite-dimensional space are equivalent. We can calculate the term involved with $u^1_i$ or $u^2_i$ easily as follows: 
\begin{eqnarray*}
\|u^1_i\|_{L^{\infty}(\Sigma)}\leq C \|u^1_i\|_{L^2(\Sigma)}\leq C \|\Pi_I u_i\|\leq C. 
\end{eqnarray*}
Since $u^2_i$ has no low Fourier coefficients term (more precisely, $\lambda_j+\beta>0$ for $j>I$), the norm $\sqrt{ \int_{\Sigma}|\nabla v|_g^2+\beta |v|^2 dv_g }$ can be applied. 
Hence, by the orthogonal property of eigenfunctions and $\|\Pi_I u_i\|\leq 2$, $i=1,2$, it follows that  
\begin{eqnarray}
&&\sum_{i=1}^2 \|u^2_i\|_{L^{\infty}(\Sigma)}\leq C\sum_{i=1}^2 \sqrt{ \int_{\Sigma}\left(|\nabla u^2_i|_g^2+\beta |u^2_i|^2\right) dv_g}\label{eq:u_12_inf} \\
&\leq& 
\varepsilon\sum_{i=1}^2\int_{\Sigma}\left(|\nabla u^2_i|_g^2+\beta |u^2_i|^2\right) dv_g+\frac C {\varepsilon}\nonumber\\
&\leq& 6 \varepsilon \int_{\Sigma}Q(u_1^2, u_2^2)+ \beta Q_0(u_1^2,u_2^2)  dv_g +\frac{C}{\varepsilon}\nonumber\\
&\leq&6 \varepsilon \int_{\Sigma} Q(u_1, u_2)+\beta Q_0(u_1,u_2) dv_g+\frac{C}{\varepsilon},\nonumber 
\end{eqnarray}
The H\"{o}lder inequality and Poincar\'{e} inequality yield that
\begin{equation}
\label{eq:av_u3} \left|\int_{\tilde{\Omega}_j} u^3_i dv_g\right|\leq \|u^3_i\|_{L^1(\Sigma)}\leq C \int_{\Sigma}|\nabla u^3_i|_g dv_g \leq \varepsilon \int_{\Sigma} |\nabla u^3_i|_g^2 dv_g+ \frac C{\varepsilon}. 
\end{equation}
Next, we consider the case $j=k_1+1,\cdots, k_2$. 
Applying~\eqref{eq:MT_loc},~\eqref{eq:u_12_inf} and \eqref{eq:av_u3}, we deduce that 
\begin{eqnarray}
&&	\log \int_{\Sigma} V_2e^{u_2} dv_g \leq \log \int_{\Omega_j} V_2e^{u_2} dv_g+ \log \left(\frac 1 {\bar{\theta}}\right) \label{eq:int1vew}\\
&\leq& \log \|V_2\|_{L^{\infty}(\Sigma)}+ \|u^1_2\|_{L^{\infty}(\Sigma)}+\|u^2_2\|_{L^{\infty}(\Sigma)} + \left|\frac 1 {|\tilde{\Omega}_j|_g} \int_{\tilde{\Omega}_j} u^3_2 dv_g\right|\nonumber\\
&&
+   \log \int_{\Omega_j} e^{u^3_2-\dashint_{\tilde{\Omega}_j} u^3_2} dv_g+\log\left(\frac 1 {\bar{\theta}}\right)\nonumber\\
&\leq& \frac 1 {4\pi}\int_{\tilde{\Omega}_j}Q(u_1^3,u_2^3) dv_g +\frac{\varepsilon(1+4\pi)}{4\pi}\sum_{i=1}^2 \int_{\Sigma}|\nabla u^3_i|^2_g dv_g\nonumber\\
&&-\log\int_{\Omega_j}  e^{u^3_1-\dashint_{\tilde{\Omega}_j} u^3_1} dv_g+6 \varepsilon \int_{\Sigma}\left(Q\left(u_1, u_2\right)+\beta Q_0(u_1,u_2)\right)dv_g+C.\nonumber
\end{eqnarray}
Concerning that $|\Omega_j|_g\geq \bar{\theta}$ and $ \int_{\Omega_j} ( u_1^3 -\dashint_{\Omega_j} u_1^3) dv_g=0$, we can obtain that 
\[ \log\int_{\Omega_j}  e^{u^3_1-\dashint_{\tilde{\Omega}_j} u^3_1} dv_g\geq \log|\Omega_j| +\dashint_{\Omega_j} u^3_1 - \dashint_{\tilde{\Omega}_j} u^3_1\geq \log \bar{\theta}- \varepsilon \int_{\tilde{\Omega}_j} |\nabla u_1^3|^2 -C_\varepsilon,\]
via the Jensen's inequality for every $\varepsilon>0.$
Since $\tilde{\Omega}_j\cap \tilde{\Omega}_{j'}=\emptyset$ for any $j\neq j',$
we sum the  inequalities~\eqref{eq:int2vew} and \eqref{eq:int1vew} over 
$j =1, \cdots, k_i$ respectively to obtain the following estimate: 
\begin{equation}\begin{aligned}\label{eq:MT-01}
	\quad\sum_{i=1}^2 4\pi k_i \log \int_{\Sigma} V_ie^{u_i} dv_g\leq & \int_{\cup_{j=1}^k \tilde{\Omega}_j} Q(u_1^3,u_2^3) dv_g + C \varepsilon \sum_{i=1}^2 \int_{\Sigma} |\nabla u^3_i|_g^2 dv_g\\
	&+C\varepsilon  \int_{\Sigma}\left(Q\left(u_1, u_2\right)+\beta Q_0 (u_1,u_2)\right) dv_g + C.
	\end{aligned}\end{equation}
	By the same deduction, using the local version Moser-Trudinger inequality of the boundary case from Lemma \ref{lem:local_MT}, we have 
	\begin{equation}\begin{aligned}\label{eq:MT-02}
	\sum_{i=1}^2 2\pi k_i^\prime \log \int_{\Sigma} V_ie^{u_i} dv_g\leq &\int_{\cup_{j=1}^{k'} \tilde{\Omega}^\prime_j} Q(u_1^3,u_2^3) dv_g +  C \varepsilon \sum_{i=1}^2 \int_{\Sigma} |\nabla u^3_i|_g^2 dv_g\\
	&+C\varepsilon  \int_{\Sigma}\left(Q\left(u_1, u_2\right)+\beta Q_0(u_1,u_2)\right) dv_g + C,
	\end{aligned}\end{equation}
	where $\tilde{\Omega}^\prime_j:= U_{\frac  12 {\bar{\delta}}}(\Omega^\prime_j)$ for $j=1, \cdots, k'.$
	
	Combining the estimates \eqref{eq:MT-01} and \eqref{eq:MT-02}, we obtain that 
	\begin{equation*}\begin{aligned}
	\sum_{i=1}^2 2\pi K_i  \log \int_{\Sigma} V_ie^{u_i} dv_g\leq & \int_{\Sigma} Q(u_1^3,u_2^3) dv_g +  C\varepsilon \sum_{i=1}^2 \int_{\Sigma} |\nabla u^3_i|_g^2 dv_g\\
	&+C\varepsilon  \int_{\Sigma}\left(Q\left(u_1, u_2\right)+\beta Q_0(u_1,u_2)\right) dv_g + C,
	\end{aligned}\end{equation*}
	in view of $K_i=2k_i+k_i'$ for $i=1,2.$
	Observe that 
	$\int_{\Sigma}|u^3_i|^2 dv_g \leq \frac 1 {\lambda_{N_{\varepsilon}}}\int_{\Sigma} |\nabla u^3_i|^2_g dv_g$
	and 
	$\sum_{i=1}^2 \int_{\Sigma} |\nabla u^3_i|^2_g dv_g\leq 6\int_{\Sigma} Q(u_1^3,u_2^3) dv_g .$
	Choosing $N_{\varepsilon}$ sufficiently large such that $\frac{4|\beta|}{\lambda_{N_{\varepsilon}}}<\varepsilon$,
	it follows that  
	\begin{eqnarray*}&&
\left| \frac{\beta}{3} \int_{\Sigma}	\Big(\sum_{i=1}^2 |u^3_i|^2+ u^3_1 u^3_2 \Big) dv_g\right|\leq \frac{2|\beta|}{3}\sum_{i=1}^2\int_{\Sigma}|u^3_i|^2 dv_g
\\
&\leq& \frac{2|\beta|}{3\lambda_{N_{\varepsilon}}}\sum_{i=1}^2\int_{\Sigma}|\nabla u^3_i|_g^2 dv_g	\leq  \frac{4|\beta|}{\lambda_{N_{\varepsilon}}}\int_{\Sigma}Q(u_1^3,u_2^3) dv_g \\
&\leq& \varepsilon \int_{\Sigma} Q(u_1^3,u_2^3) dv_g.
\end{eqnarray*}
Using the orthogonal property of the basis $\{\psi_j: j=1,2,\cdots\}$ and  $\|\Pi_Iu_i\|\leq 2 $ for $i=1,2$, 
we have 
\begin{eqnarray*}
&& \int_{\Sigma} Q(u_1^3,u_2^3) dv_g + C\varepsilon \sum_{i=1}^2 \int_{\Sigma} |\nabla u^3_i|_g^2 dv_g\\
&\leq& \frac {1+ 2 C\varepsilon} {1-\varepsilon} \int_{\Sigma} \left(Q(u_1^3,u_2^3)+ \beta Q_0(u^3_1,u^3_2)\right) dv_g\\
&\leq&\frac {1+ 2 C\varepsilon} {1-\varepsilon}\int_{\Sigma}\left(Q\left(u_1, u_2\right)+\beta Q_0(u_1,u_2) \right) dv_g+
C\sum_{i=1}^2\|\Pi_I u_i\|^2\\
&\leq&\frac {1+ 2 C\varepsilon} {1-\varepsilon}\int_{\Sigma}\left(Q\left(u_1, u_2\right)+ \beta Q_0(u_1,u_2) \right) dv_g+C.
\end{eqnarray*}
Hence, 
\begin{eqnarray*}
\quad\sum_{i=1}^2 2\pi K_i \log \int_{\Sigma} V_ie^{u_i} dv_g
\leq \Big(1+\frac{(2C+1)\varepsilon}{1-\varepsilon} \Big) \int_{\Sigma}\left(Q\left(u_1, u_2\right)+\beta Q_0(u_1,u_2)\right) dv_g 
+C.
\end{eqnarray*}
By choosing $\varepsilon>0$ with $\epsilon= \frac{(2C+1)\varepsilon}{1-\varepsilon}$, we conclude the case $K_1+K_2>0$.

Finally, if $K_1=K_2=0$, then the left-hand side of \eqref{eq:inptoved_MT_proj} vanishes. On the other hand, by the orthogonal decomposition with respect to $\{\psi_j\}$, the quadratic form is non-negative for $\psi_j$, $j>I$. 
Hence, by the assumption $\|\Pi_I u_i\| \le 2$, $i=1,2$, we have
\[\int_\Sigma (Q(u_1,u_2)+\beta Q_0(u_1,u_2))dv_g \ge -C,\]
and this completes the proof of Lemma~\ref{lem:almost_MT}.
\end{proof}
The following conclusion then follows by standard arguments (see, e.g., \cite{MalchiodiRuiz2013,Malchiodi2007SomeER,djadli_existence_2008}) and the details are omitted.

\begin{lemma}\label{lem:large_proj_no_concentration_2}
Let $K_1, K_2 \in \N_0$ with $K_1+K_2>0$ and suppose that $2\pi K_i< \rho_i < 2\pi (K_i + 1)$ for $i = 1, 2.$
For any $\varepsilon>0$, there exists $L=L(\varepsilon)>0$ such that for any $u\in J^{-L}_{\beta,\rho}$ at least one of the following statements  holds: 
\begin{eqnarray*}
\dd\left( \frac{ V_i e^{u_i}}{\int_{\Sigma} V_i e^{u_i} dv_g} dv_g , (\Sigma_\partial)_{K_i}\right)\leq \varepsilon, &\text{ for some } i=1,2 \text{ with } K_i>0;\\
\|\Pi_I u_i\|\geq 2, &\text{ for some } i=1,2. 
\end{eqnarray*}
\end{lemma}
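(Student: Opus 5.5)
We argue by contradiction. Assume that for some $\varepsilon>0$ there is a sequence $u^n=(u_1^n,u_2^n)$ with $J_{\beta,\rho}(u^n)\to-\infty$, $\|\Pi_I u_i^n\|<2$ for $i=1,2$, and
\[
\dd\Big(\tfrac{V_ie^{u_i^n}}{\int_\Sigma V_ie^{u_i^n}}dv_g,(\Sigma_\partial)_{K_i}\Big)>\varepsilon \quad\text{for every } i \text{ with } K_i>0 .
\]
The goal is to show that these non-concentration conditions place $u^n$ in the range of Lemma \ref{lem:almost_MT}. That lemma then gives a lower bound on $J_{\beta,\rho}$, which is the contradiction.

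\textbf{Step 1 (covering).} Fix $i$ with $K_i>0$ and write $f_i^n=V_ie^{u_i^n}/\int_\Sigma V_ie^{u_i^n}$. We use the covering argument of \cite[Lemma 3.3]{Malchiodi2007SomeER} and \cite[Lemma 2.4]{MalchiodiRuiz2013}, adapted to the weighted barycenters. The claim to prove is the following. There exist $\delta,\theta>0$, depending only on $\varepsilon$ and $K_i$, and integers $k_i,k_i'\ge0$ with $2k_i+k_i'\ge K_i+1$, such that $f_i^n$ has mass at least $\theta$ on each of
\begin{itemize}
\item $k_i$ sets at distance at least $\delta/2$ from $\partial\Sigma$, and
\item $k_i'$ sets meeting $\partial\Sigma$,
\end{itemize}
with all these sets mutually $\delta$-separated.

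We prove the claim by contraposition. Suppose the mass of $f_i^n$ is essentially covered by small balls, weighted so that an interior ball counts $2$ and a boundary ball counts $1$, with total weight at most $K_i$. Then we build an element of $\Sigma_{k,K_i-2k}\subset(\Sigma_\partial)_{K_i}$ by placing Dirac masses at the centers; a boundary-touching ball is centered at a point of $\partial\Sigma$. For $\delta$ small this element is $\varepsilon$-close to $f_i^n$ in $\dd$. This contradicts the assumption.

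If $2k_i+k_i'$ exceeds $K_i+1$, we discard sets so that equality holds. One interior set can be replaced by a boundary set, or dropped, to adjust the parity. If $K_j=0$ for the other index, we take $k_j=k_j'=0$.

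\textbf{Step 2 (improved inequality).} We apply Lemma \ref{lem:almost_MT} with $K_i+1$ in place of $K_i$ for those $i$ with $K_i>0$; its proof does not use $K_i$ beyond $K_i=2k_i+k_i'$. The hypothesis $\|\Pi_Iu_i^n\|\le2$ holds. For any $\epsilon>0$ this gives
\[
\sum_i 2\pi(K_i+1)\log\int_\Sigma V_ie^{u_i^n}\le(1+\epsilon)\int_\Sigma\big(Q+\beta Q_0\big)(u^n)+C .
\]
The sum runs over $i$ with $K_i>0$; for $K_i=0$ we use the plain inequality \eqref{eq:MT-b}, with constant $2\pi$.

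We choose $\epsilon$ so small that $\rho_i(1+\epsilon)<2\pi(K_i+1)$ for both $i$. For the $i$ with $K_i=0$ we need $\rho_i<2\pi$, which holds by assumption.

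\textbf{Step 3 (conclusion).} We must compare $\rho_i\log\int V_ie^{u_i}$ with $\frac{2\pi(K_i+1)}{1+\epsilon}\log\int V_ie^{u_i}$. The sign of $\log\int V_ie^{u_i}$ is the only subtlety. Jensen's inequality and zero mean give $\log\int V_ie^{u_i}\ge -C$, since $\log V_i$ is integrable because $\gamma\ge0$. When this quantity is large we have $\rho_i<2\pi(K_i+1)/(1+\epsilon)$, and so
\[
J_{\beta,\rho}(u^n)\ge \sum_i\Big(\frac{2\pi(K_i+1)}{1+\epsilon}-\rho_i\Big)\log\int V_ie^{u_i^n}-C\ge -C .
\]
Here we also need $\int(Q+\beta Q_0)\ge -C$; this uses $\|\Pi_I u_i\|\le2$, as at the end of the proof of Lemma \ref{lem:almost_MT}. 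The uniform bound $J_{\beta,\rho}(u^n)\ge -C$ contradicts $J_{\beta,\rho}(u^n)\to-\infty$.

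The main obstacle is Step 1, the combinatorial covering with mixed weights. Sets that touch the boundary count only once. A small ball near but not on $\partial\Sigma$ must be treated as a boundary set, which costs nothing, so that the case split between interior and boundary sets in Lemma \ref{lem:almost_MT} is respected.
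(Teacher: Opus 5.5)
Your route is the standard one the paper has in mind when it omits the proof: argue by contradiction, use a weighted covering lemma to get total weight $2k_i+k_i'\ge K_i+1$, apply Lemma~\ref{lem:almost_MT}, and use the Jensen bound $\log\int_\Sigma V_ie^{u_i}\,dv_g\ge -C$. For $K_1,K_2>0$ it goes through.

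\textbf{The gap: the mixed case $K_j=0<K_i$.} There you set $k_j=k_j'=0$ and propose to control $\rho_j\log\int_\Sigma V_je^{u_j}\,dv_g$ separately with the scalar inequality \eqref{eq:MT-b}. This estimate cannot simply be added to the improved inequality for $u_i$, because both consume the whole quadratic energy.
\begin{itemize}
\item The most \eqref{eq:MT-b} gives for one component is $2\pi\log\int_\Sigma e^{u_j}\,dv_g\le\frac14\int_\Sigma|\nabla u_j|_g^2\,dv_g+C\le\int_\Sigma Q(u_1,u_2)\,dv_g+C$.
\item The bound $Q\ge\frac14|\nabla u_j|_g^2$ is sharp because of the coupling; equality holds when $\nabla u_i=-\frac12\nabla u_j$.
\item If you split $\int_\Sigma(Q+\beta Q_0)\,dv_g$ with weights $t$ and $1-t$ between the two estimates, you get a lower bound for $J_{\beta,\rho}$ only when $\frac{(1+\epsilon)\rho_i}{2\pi(K_i+1)}+\frac{\rho_j}{2\pi}<1$. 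This fails, for instance, when $\rho_i$ is close to $2\pi(K_i+1)$ and $\rho_j$ is close to $2\pi$.
\item Separately, \eqref{eq:MT-b} does not control the possibly negative term $\beta Q_0$ at all.
\end{itemize}
Your Step~3 display tacitly uses a single inequality with both $\log\int_\Sigma V_ie^{u_i}$ and $\log\int_\Sigma V_je^{u_j}$ on the left. You have not produced such an inequality in this case.

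\textbf{The fix.} Keep $u_j$ inside the joint inequality. For any probability density $f_j$, some set in a fixed finite cover of $\Sigma$ by sets of small diameter carries mass at least $\theta$. If that set lies within $\delta/2$ of $\partial\Sigma$, enlarge it until it meets $\partial\Sigma$; the mass lower bound is preserved. Now apply Lemma~\ref{lem:almost_MT} with one boundary region ($k_j'=1$) or one interior region ($k_j=1$) for $u_j$. This puts $2\pi\log\int_\Sigma V_je^{u_j}\,dv_g$ or $4\pi\log\int_\Sigma V_je^{u_j}\,dv_g$ into the same inequality as the $u_i$-term. Either suffices, since $\rho_j<2\pi$ and $\log\int_\Sigma V_je^{u_j}\,dv_g\ge -C$.

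\textbf{Two minor points.}
\begin{itemize}
\item The parity adjustment in Step~1 is unnecessary. ``Replacing an interior set by a boundary set'' is not legitimate as stated, since an interior set with mass $\theta$ does not produce a boundary set with mass $\theta$. Simply apply Lemma~\ref{lem:almost_MT} with $K_i^*:=2k_i+k_i'\ge K_i+1$; the surplus coefficient multiplies a quantity bounded below, so it is harmless.
\item The bound $\int_\Sigma(Q+\beta Q_0)\,dv_g\ge -C$ is not needed in Step~3. Dividing the improved inequality by $1+\epsilon$ already gives $J_{\beta,\rho}\ge -C$.
\end{itemize}
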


When $K_1=K_2=0$, the concentration alternative in Lemma~\ref{lem:large_proj_no_concentration_2} is absent. In this case, since $\rho_i<2\pi$ for $i=1,2$, the Moser--Trudinger inequality, together with the orthogonal decomposition, shows that $J_{\beta,\rho}$ is bounded from below on $\{u:\|\Pi_Iu_i\|<2,\ i=1,2\}$. Hence, for $L>0$ sufficiently large and $u\in J_{\beta,\rho}^{-L}$, $\|\Pi_Iu_i\|\ge 2$ for some $i=1,2$.

\medskip

In the next part, we will construct the map $\Psi$ from the low sublevel set $J^{-L}_{\beta,\rho}$ to $(\bga_1)_{\pa, K_1,I}\star (\bga_2)_{\pa, K_2,I}.$

\begin{proposition}\label{prop:inverse_map}
Let $K_1,K_2, I \in \N_0$ with $K_1+K_2+I>0$.
Assume that $\rho_i \in (2\pi K_i, 2\pi(K_i+1) )$, $i=1,2$, and $-\lambda_{I+1}<\beta<-\lambda_I$  . Then, for $L$ sufficiently large, there exists a continuous map $\Psi: J^{-L}_{\beta,\rho}\rightarrow (\bga_1)_{\pa, K_1,I}\star (\bga_2)_{\pa, K_2,I}$.
\end{proposition}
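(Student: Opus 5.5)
We build $\Psi$ component by component, following \cite{battaglia_jevnikar_ruiz2015} and \cite{Battaglia2018}. For $u=(u_1,u_2)\in J^{-L}_{\beta,\rho}$ we produce two pieces of data for each $i$: a point of $\mathbb{S}^{I-1}$ (when $I>0$) and a point of $(\bga_i)_{\partial,K_i}$ (when $K_i>0$). Each piece comes with a weight. These weights will serve as the join coordinates $s\in\Delta^3$, and we arrange them so that at least one is positive and so that a factor is only used when its weight is positive.

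\textbf{Negative directions.} If $I>0$, set
\[
X_i(u):=\frac{\Pi_I u_i}{|\Pi_I u_i|}\in \mathbb{S}^{I-1}
\]
whenever $\|\Pi_I u_i\|\ge 1$. Its weight is $\tilde s_{2i-1}:=f(\|\Pi_I u_i\|)$, where $f$ is continuous, vanishes on $[0,1]$ and equals $1$ on $[2,\infty)$.

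\textbf{Concentration directions.} Let $\mu_i:=V_ie^{u_i}dv_g/\int_\Sigma V_ie^{u_i}dv_g$ and fix $\varepsilon\le \varepsilon_0$ as in Lemma \ref{lem:retraction}. The weight is $\tilde s_{2i}:=g(\dd(\mu_i,(\Sigma_\partial)_{K_i}))$, where $g$ is continuous, equals $1$ on $[0,\varepsilon/2]$ and vanishes on $[\varepsilon,\infty)$. Where $\tilde s_{2i}>0$, we set
\[
\sigma_i(u):=(\cP_i)_*\,\phi_{K_i}(\mu_i).
\]
Here $\phi_{K_i}$ is the retraction of Lemma \ref{lem:retraction}, and $(\cP_i)_*$ pushes forward through the retraction $\cP_i:\Sigma\to\bga_i$.

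\textbf{Main obstacle: the target space.} The issue is that $(\cP_i)_*$ maps $(\Sigma_\partial)_{K_i}$ into the right space. An element of $\Sigma_{k,K_i-2k}$ has $k+(K_i-2k)=K_i-k\le K_i$ atoms. Pushing forward to $\bga_i$ therefore gives a measure with at most $K_i$ atoms on $\bga_i$, which lies in $(\bga_i)_{\partial,K_i}$. This is exactly where the interior/boundary weighting matters: an interior point carries weight $2$ but still counts as a single atom after projection. The push-forward is continuous for $\dd$, since $\cP_i$ is Lipschitz. Continuity at the collapse points of the join, where a weight vanishes, follows from the equivalence relation defining the join, because the corresponding coordinate is then disregarded.

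\textbf{Assembling $\Psi$.} Set $s:=(\tilde s_1,\tilde s_2,\tilde s_3,\tilde s_4)/\sum_j\tilde s_j$, with the conventions for empty factors when $K_i=0$ or $I=0$, and define
\[
\Psi(u):=(X_1,\sigma_1,X_2,\sigma_2;s).
\]
For this to be well defined we need $\sum_j\tilde s_j>0$ on $J^{-L}_{\beta,\rho}$. Lemma \ref{lem:large_proj_no_concentration_2} applied with $\varepsilon/2$ gives this when $K_1+K_2>0$: for $L$ large, every $u\in J^{-L}_{\beta,\rho}$ satisfies either $\dd(\mu_i,(\Sigma_\partial)_{K_i})\le\varepsilon/2$ for some $i$ with $K_i>0$, or $\|\Pi_Iu_i\|\ge 2$ for some $i$. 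When $K_1=K_2=0$ we use instead the remark following that lemma, which says that $\|\Pi_Iu_i\|\ge2$ for some $i$. In either case at least one weight equals $1$. Continuity of $\Psi$ then follows from the continuity of $u\mapsto \mu_i$ from $\cH$ into $(\cM(\Sigma),\dd)$, together with the continuity of $\Pi_I$, $\phi_{K_i}$ and $(\cP_i)_*$.
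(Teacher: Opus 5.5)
Your construction is correct and proves the proposition as stated. Its skeleton is the paper's: barycentric coordinates $(\cP_i)_*\phi_{K_i}(\mu_i)$, spherical coordinates from a normalized projection onto $\mathrm{span}\{\psi_1,\dots,\psi_I\}$, cut-off simplex weights, and nondegeneracy of the weights from Lemma \ref{lem:large_proj_no_concentration_2} (or the remark after it when $K_1=K_2=0$). Your atom count showing $(\cP_i)_*\bigl((\Sigma_\partial)_{K_i}\bigr)\subset(\bga_i)_{\partial,K_i}$ is a detail the paper leaves implicit. Only continuity of $\cP_i$ is needed there, not Lipschitz, because $\dd$ metrizes weak convergence of probability measures on $\Sigma$. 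Your version also treats $I=0$ uniformly, whereas the paper defers that case to the literature. Your four independent cut-off weights replace the paper's hierarchical ones (spherical weight $\min\{1,\max_i\tilde r_i\}$, with the remainder split via $d_1/(d_1+d_2)$). This is a harmless simplification.

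The substantive difference is the spherical coordinate. The paper uses $X_i(u)=\Pi_I(2u_i+u_j)/\|\Pi_I(2u_i+u_j)\|$, with weights built from $\tilde r_i=\min\{1,\|\Pi_I(2u_i+u_j)\|\}$. It therefore needs \eqref{eq:ri_positive} to turn the alternative $\|\Pi_Iu_i\|\ge 2$ of the lemma into a positive weight, while your $\Pi_Iu_i$ uses that alternative directly.

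What the paper's choice buys shows up in the next proposition, $\Psi\circ\Phi^\Lambda\simeq\id$, which is what the min-max scheme actually needs. Since $2\Phi^\Lambda_i+\Phi^\Lambda_j=\frac32\bigl(v_i^\Lambda-\overline{v_i^\Lambda}+w_i^{\Lambda_0}\bigr)$, the $i$-th spherical coordinate of $\Psi\circ\Phi^\Lambda(\zeta)$ depends only on $(X_i,\sigma_i,s_{2i-1},s_{2i})$ and stays close to $X_i$. This is what yields the limits $X_i^\infty$ and the positivity \eqref{eq:limitingmap}.

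With your choice, $\Pi_I\Phi^\Lambda_1\approx\sqrt{\log^+(s_1\Lambda_0)}\,X_1-\frac12\sqrt{\log^+(s_3\Lambda_0)}\,X_2$ couples the two spheres. For example, at $\zeta$ with $s_3=1$ one has $\Phi^\Lambda(\zeta)=(-\frac12 w_2^{\Lambda_0},w_2^{\Lambda_0})$. Your map then gives $\tilde s_1=1$ with first spherical coordinate $-X_2$, whereas the paper's $\Psi$ returns exactly the point $X_2$ of the second sphere. The homotopy to the identity can probably still be arranged for your map: on $\{s_1>\frac18\}$ one still has $\langle X_1(\Phi^\Lambda(\zeta)),X_1\rangle>0$ for $\Lambda_0$ large, since $\frac12\sqrt{\log\Lambda_0}<\sqrt{\log(\Lambda_0/8)}$. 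However, it would no longer go block by block as in the paper.
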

\begin{proof}
We first consider some special cases. 
If $I=0$, the construction follows by the barycentric projection arguments in \cite{battaglia_jevnikar_ruiz2015} when $K_1,K_2>0$, and by the corresponding one-subcritical argument in \cite{Malchiodi2007SomeER} when exactly one of $K_1,K_2$ is zero. 
The same arguments apply in the present boundary setting using Lemmas~\ref{lem:retraction} and \ref{lem:large_proj_no_concentration_2}.

Hence, we assume $I>0$ below. We first consider the case $K_1, K_2>0$.
Choose $\varepsilon_0>0$ sufficiently small so that, for every $i$ with $K_i>0$, the map 
\begin{equation}\label{eq:retra_phi}
\phi_{K_i}: \{ \sigma\in \cM(\Sigma): \dd(\sigma, (\Sigma_\pa)_{K_i})<\varepsilon_0\}\rightarrow (\Sigma_\pa)_{K_i}
\end{equation} 
given by Lemma~\ref{lem:retraction} is continuous.
Applying  Lemma~\ref{lem:large_proj_no_concentration_2}, we then fix $L:=L(\varepsilon_0/3)$.
For each $i=1,2$, set
\[ d_i(u)= \dd\left( \frac{V_ie^{u_i}}{\int_{\Sigma} V_ie^{u_i} dv_g} dv_g ,(\Sigma_\pa)_{K_i}\right). \]
We introduce the function $\tilde{r}(u)$, defined as $f\left(\frac{d_1(u)}{d_1(u)+d_2(u)}\right),$ where $$f(t)=\max\left\{\min\left\{1, 2t-\frac 1 2\right\}, 0\right\},$$  for $t\in [0,1]$.
This cut-off function $f(t)$ is designed to
nullify the coefficient of $\sigma_i$ when $\sigma_i$ is far away from the barycenter space $(\Si_\pa)_{K_i}$.

For each $i=1,2,$ we define 
\[\tilde{r}_i(u)=  \min\{1,\|\Pi_I (2u_i+u_j)\|\}\in [0,1] \quad \text{where } 1 \le j \neq i \le 2.\]       
Let $$ s(u):=
\left(\begin{array}{c}
s_1(u)\\
s_2(u)\\
s_3(u)\\
s_4(u)
\end{array}\right)=\left(\begin{array}{c}
\frac{\tilde{r}_1(u)}{\tilde{r}_1(u)+\tilde{r}_2(u)}\min\{1,\max\{\tilde{r}_1(u),\tilde{r}_2(u)\}\}\\
(1-\tilde{r}(u))(1-\min\{1,\max\{\tilde{r}_1(u),\tilde{r}_2(u)\}\})\\
\frac{\tilde{r}_2(u)}{\tilde{r}_1(u)+\tilde{r}_2(u)}\min\{1,\max\{\tilde{r}_1(u),\tilde{r}_2(u)\}\}\\
\tilde{r}(u)(1-\min\{1,\max\{\tilde{r}_1(u),\tilde{r}_2(u)\}\})
\end{array}\right).$$

We construct the map from $J^{-L}_{\beta,\rho} $ into $(\bga_1)_{\pa, K_1,I}\star (\bga_2)_{\pa, K_2,I}$ as follows:
\[\Psi: u\mapsto \zeta(u)=( X_1(u),\sigma_1(u),X_2(u),\sigma_2(u); s(u))\]
where for each $i=1,2$ with $K_i>0$,
\[ \sigma_i(u)=(\cP_i)_*\phi_{K_i}\left(\frac{V_ie^{u_i}}{\int_{\Sigma} V_ie^{u_i} dv_g} \right),\]
whenever $d_i(u)<\varepsilon_0$, and
\[ X_i(u)= \frac{\Pi_{I} (2u_i+u_j)}{\|\Pi_{I} (2u_i+u_j)\|}\in \mathbb{S}^{I-1}  \text{ for } 1 \le j \ne i \le 2\]
whenever $\|\Pi_{I} (2u_i+u_j)\|>0$.
Here, $\phi_{K_i}$ is the retraction \eqref{eq:retra_phi} from Lemma~\ref{lem:retraction}.

\medskip

\noindent {\it Claim. $\Psi$ is well-defined and continuous.} \par        
First, we note that if $s_{2i-1}(u)>0$, then $\tilde{r}_i(u)>0$, and hence $X_i(u)$ is well-defined. 
\medskip

\noindent {\it Case I.  $\|\Pi_{I} u_i\|\geq 1$ for either $i=1$ or $i=2$. }\par 
In this case, we have 
\begin{equation}\label{eq:ri_positive}
\max\{\|\Pi_I(2u_1+u_2)\|, \|\Pi_I(2u_2+u_1)\|\} \ge 2\max_{i=1,2} \|\Pi_I u_i\|-\min_{i=1,2} \|\Pi_I u_i\| \ge 1,
\end{equation}
so it holds that $s_1(u)+s_3(u)=1$.
Thus, the coefficients corresponding to $\sigma_1$ and $\sigma_2$ in the expression for $\zeta(u)$ are zero.
Then, $\zeta(u)$ is well-defined. 
\par

\noindent {\it Case II.  $\|\Pi_{I} u_i\|< 1 $ for both $i=1,2$. }\par 
In this case,  Lemma~\ref{lem:large_proj_no_concentration_2} indicates that for some $i=1,2$ with $K_i>0$, we have 
$$ d_i(u) \leq \frac{\varepsilon_0}3. $$
Without loss of generality, we assume $ d_1(u) \leq \frac{\varepsilon_0}3. $ 
If $d_2(u)<\varepsilon_0$,  then both $\sigma_1(u)$ and $\sigma_2(u)$ are well-defined.
Otherwise, by the definition, $d_2(u) \ge \varepsilon_0$, so $\frac{d_1(u)}{d_1(u)+d_2(u)} \leq \frac14$.
Due to the definition of $f$, it follows that $\tilde{r}(u)=0$ and $s_4(u)=0$.
Therefore, the coordinate $\sigma_2(u)$ is irrelevant in the join, and $\zeta(u)$ is well-defined.

The continuity of $\Psi$ follows from the continuity of the above maps and the definition of the join topology, since a coordinate becomes irrelevant whenever its simplex coefficient vanishes.

\medskip

It remains to consider the cases where at least one of $K_1,K_2$ vanishes. Suppose $K_1=0<K_2$. Setting $d_1(u)\equiv\varepsilon_0$ and omitting the first barycenter coordinate, the same construction applies. Indeed, in Case II, Lemma~\ref{lem:large_proj_no_concentration_2} gives $d_2(u)\le\varepsilon_0/3$, and hence $\tilde{r}(u)=1$, so the coefficient corresponding to the absent barycenter component vanishes. The case $K_2=0<K_1$ is analogous.

Suppose next $K_1=K_2=0$. Then $(\bga_1)_{\pa, K_1,I}\star (\bga_2)_{\pa, K_2,I}=S^{I-1}\star S^{I-1}$.
By the observation following Lemma~\ref{lem:large_proj_no_concentration_2} and \eqref{eq:ri_positive}, for $L>0$ sufficiently large, $\tilde{r}_1(u)+\tilde{r}_2(u)>0$ for every $u\in J_{\beta,\rho}^{-L}.$
Hence, using the notation above, we define
\[\Psi(u)= \left( X_1(u),X_2(u); \frac{\widetilde r_1(u)}{\widetilde r_1(u)+\widetilde r_2(u)}, \frac{\widetilde r_2(u)}{\widetilde r_1(u)+\widetilde r_2(u)} \right),\]
which is well-defined and continuous.
\end{proof}

Fix $L>0$ sufficiently large so that $\Psi$ is well-defined on $J_{\beta,\rho}^{-L}$, and let $\Lambda_0^*(L)$ be given by Proposition \ref{prop:map}. Then, for every fixed $\Lambda_0 \ge \Lambda_0^*(L)$ and every $\Lambda \ge \Lambda_0$, it holds that $\Phi^{\Lambda}((\bga_1)_{\pa,K_1,I}\star (\bga_2)_{\pa,K_2,I})\subseteq J^{-L}_{\beta,\rho}$.
Thus, the composition $\Psi\circ\Phi^\Lambda$ is well-defined and continuous.
We next show that, if $\Lambda_0$ is chosen sufficiently large, then $\Psi\circ\Phi^\Lambda$ is homotopic to the identity for all sufficiently large $\Lambda$. This can be carried out since the weight of the points on the boundary is smaller than that of interior points, as explained in the introduction.

\begin{proposition}
Let $K_1,K_2, I \in \N_0$ with $K_1+K_2+I>0$.
Assume that $\rho_i \in (2\pi K_i, 2\pi(K_i+1) )$ for $i=1,2$, and $-\lambda_{I+1}<\beta<-\lambda_I$.
Let $L>0$ be fixed as above. 
Then there exists $\overline{\Lambda}_0=\overline{\Lambda}_0(L) \ge \Lambda_0^*(L)$ such that, for every fixed $\Lambda_0\ge \overline{\Lambda}_0$ and all sufficiently large $\Lambda$, the composition $\Psi\circ\Phi^\Lambda$ is homotopically equivalent to the identity map on $(\bga_1)_{\pa, K_1,I}\star (\bga_2)_{\pa, K_2,I}$, denoted by $\Psi\circ \Phi^{\Lambda} \simeq \id_{(\bga_1)_{\pa, K_1,I}\star (\bga_2)_{\pa, K_2,I}}$.
\end{proposition}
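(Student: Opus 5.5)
We will analyse $\Psi\circ\Phi^\Lambda(\zeta)$ coordinate by coordinate for $\zeta=(X_1,\sigma_1,X_2,\sigma_2;s)$ and build an explicit homotopy to $\zeta$. The proof has three stages: the $\mathbb{S}^{I-1}$ coordinates, the barycenter coordinates, and then the simplex coordinates $s$.

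\textbf{Spherical coordinates.} Write $u=\Phi^\Lambda(\zeta)$. Then
\[
2u_1+u_2=\tfrac32\bigl(v_1^\Lambda-\overline{v_1^\Lambda}+w_1^{\Lambda_0}\bigr).
\]
The same holds for $2u_2+u_1$ with the indices exchanged. The definition of $\Pi_I$ and the orthonormality of $\{\psi_j\}$ give
\[
\Pi_I(2u_i+u_j)=\tfrac32\sqrt{\log^+(s_{2i-1}\Lambda_0)}\,X_i+\tfrac32\,\Pi_I(v_i^\Lambda-\overline{v_i^\Lambda}).
\]
The last term is bounded by $C\|v_i^\Lambda-\overline{v_i^\Lambda}\|_{L^2}\le C$, by the computation leading to \eqref{eq:est_mix_w_vi} and \eqref{eq:est_v_i}. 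The constant $C$ is independent of $\Lambda$, $\Lambda_0$ and $\zeta$.

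It follows that where $s_{2i-1}$ is not small, $\tilde r_i(u)=1$ once $\Lambda_0$ is large. Moreover $X_i(u)$ is close to $X_i$, so $X_i(u)$ can be joined to $X_i$ by the normalized straight segment. Where $s_{2i-1}$ is small, the vector $\Pi_I(2u_i+u_j)$ has bounded size. Then $\tilde r_i(u)$ may be less than $1$ and $X_i(u)$ is not controlled, but we will show that the corresponding simplex weight is then comparable to $s_{2i-1}$, so the error is harmless in the join topology.

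\textbf{Barycenter coordinates.} This is where $\bga_i\subset\partial\Sigma$ is used. Where $s_{2i}$ is not small, we have $s_{2i}\Lambda\to\infty$ as $\Lambda\to\infty$. The measure $V_ie^{u_i}/\int V_ie^{u_i}$ then concentrates at $\sum_j t^i_j\delta_{x^i_j}$, by the standard estimates of \cite{battaglia_jevnikar_ruiz2015}. Two facts make this work. The term $w_i^{\Lambda_0}$ is a fixed bounded function once $\Lambda_0$ is fixed. The $-\tfrac12 v_j$ contribution is bounded near $\bga_i$ because $\bga_1\cap\bga_2=\emptyset$. Since the points $x^i_j$ lie on the boundary, this measure belongs to $\Sigma_{0,K_i}\subset(\Sigma_\partial)_{K_i}$. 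Hence $d_i(u)\to0$, $d_{j}(u)$ stays bounded below, and $\phi_{K_i}$ returns a measure close to $\sigma_i$. Because $\cP_i$ is the identity on $\bga_i$, we get $(\cP_i)_*\phi_{K_i}(\cdot)\to\sigma_i$. This uses the continuity in Lemma~\ref{lem:retraction} together with uniform convergence on compacta of the parameter set.

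\textbf{Simplex coordinates and the homotopy.} The map $s(u)$ is close to $s$ only qualitatively. For instance, $f(d_1/(d_1+d_2))$ equals $1$ or $0$ according to which of $d_1,d_2$ is small. We therefore define the homotopy in two steps.

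First, we deform $\Psi\circ\Phi^\Lambda(\zeta)$ by straight-line interpolation in each factor: geodesic/normalized segments on $\mathbb{S}^{I-1}$, and on $(\bga_i)_{\partial,K_i}$ through a nearby-measure retraction argument. This is legitimate because the relevant coordinates are $\varepsilon$-close wherever their weights are bounded away from zero.

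Second, we interpolate the simplex weights linearly from $s(u)$ to $s$. To do so we need the following property: whenever $s_k$ is bounded below, the weight $s_k(u)$ is also positive. Conversely, whenever $s_k(u)>0$, the coordinate is close to the true one. Both directions follow from the asymptotics in the previous two stages, applied at the scale $\Lambda_0\le\Lambda$.

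The main obstacle is uniformity near the faces of the simplex. There several $s_k$ are small, so $\Lambda_0 s_{2i-1}$ and $\Lambda s_{2i}$ are only moderately large, and neither regime above applies cleanly. In addition, the two scales $\Lambda_0$ and $\Lambda$ compete in the cut-offs $\min\{1,\max\tilde r_i\}$ and $f$. To handle this, we would use compactness of the join and choose $\overline\Lambda_0$ so that $\tilde r_i=1$ as soon as $s_{2i-1}\ge\eta$. On the complementary region, where $s_1+s_3<2\eta$, the barycenter part dominates, and there we would take $\Lambda\to\infty$. Shrinking $\eta$ finally produces a homotopy that is continuous across the degenerate strata, which is exactly where the join identifications are needed.
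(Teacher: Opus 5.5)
\textbf{There is a genuine gap in your simplex step.} It is not only a matter of uniformity near the faces: the weights $s(u)$ of $\Psi(\Phi^\Lambda(\zeta))$ are not comparable to $s$ in either direction. If $s_{2i-1}\Lambda_0\le 1$ (in particular if $s_{2i-1}=0$), then $w_i^{\Lambda_0}\equiv 0$ and $\Pi_I(2u_i+u_j)=\tfrac32\Pi_I(v_i^\Lambda)$. For $s_{2i}\Lambda$ large, this is essentially the projection of the profile $\log\sum_j t^i_j d_g(\cdot,x^i_j)^{-4}$ onto $\psi_1,\dots,\psi_I$. That vector is generically nonzero and of order one, does not depend on $\Lambda_0$, and does not tend to $0$ as $\Lambda\to\infty$. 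Consequently:
\begin{itemize}
\item $\tilde r_i(u)>0$, possibly $=1$, so $s_{2i-1}(u)$ can be of order one.
\item $X_i(u)=\Pi_I(v_i^\Lambda)/\|\Pi_I(v_i^\Lambda)\|$ is unrelated to $X_i$, and when $s_{2i-1}=0$ there is no $X_i$ coordinate in $\zeta$ at all.
\item At $s_1=s_3=0$ one can have $s_1(u)+s_3(u)=\min\{1,\max\{\tilde r_1,\tilde r_2\}\}=1$, so the barycenter part does \emph{not} dominate there.
\item Conversely, at $s=(\tfrac14,\tfrac14,\tfrac14,\tfrac14)$ one has $\tilde r_1=\tilde r_2=1$ for $\Lambda_0$ large, hence $s(u)=(\tfrac12,0,\tfrac12,0)$, although $s_2,s_4$ are bounded below.
\end{itemize}
Your first deformation moves the coordinates to the true ones while keeping the weights $s(u)$. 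It is therefore undefined on $\{s_{2i-1}=0<s_{2i-1}(u)\}$, and no choice of $\eta$, $\overline\Lambda_0$, $\Lambda$ repairs this.

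\textbf{The missing idea is that you never need $s(u)\approx s$.} A linear homotopy of the simplex weights alone is legitimate as soon as each coordinate is defined and continuous wherever either endpoint weight is positive. Set $q(t)=(t-\tfrac18)_+$ and $\tau_j(s)=q(s_j)/\sum_\ell q(s_\ell)$.
\begin{itemize}
\item Interpolating $s$ to $\tau(s)$ gives $\id\simeq(X_1,\sigma_1,X_2,\sigma_2;\tau(s))$.
\item Interpolating $s(u)$ to $\tau(s)$ gives $\Psi\circ\Phi^\Lambda\simeq(X_1(u),\sigma_1(u),X_2(u),\sigma_2(u);\tau(s))$. On $\{s_k(u)>0\}$ the coordinates exist by construction of $\Psi$. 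Moreover, $\tau_{2i-1}>0$ (resp.\ $\tau_{2i}>0$) forces $s_{2i-1}>\tfrac18$ (resp.\ $s_{2i}>\tfrac18$), where $\Lambda_0$ (resp.\ $\Lambda$) large guarantees $\Pi_I(2u_i+u_j)\neq0$ (resp.\ $d_i(u)<\varepsilon_0$).
\end{itemize}
After this, all comparisons of coordinates take place on $\{s_k>\tfrac18\}$, where your estimates are uniform. For example, $\langle X_i(u),X_i\rangle>0$ once $\sqrt{\log(\Lambda_0/8)}$ exceeds the bound on $\|\Pi_I(v_i^\Lambda)\|$, so your normalized segments never pass through $0$.

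\textbf{A second error is in your barycenter step.} The measure $V_1e^{u_1}/\int_\Sigma V_1e^{u_1}$ does not converge to $\sigma_1$. It converges to $\sum_j\tilde t^1_j\delta_{x^1_j}$ with $\tilde t^1_j\propto t^1_jV_1(x^1_j)\exp\bigl(w_1^{\Lambda_0}-\tfrac12(v_2^\Lambda-\overline{v_2^\Lambda}+w_2^{\Lambda_0})\bigr)(x^1_j)$, and similarly for $i=2$. So $\sigma_i(u)$ is not $\varepsilon$-close to $\sigma_i$; only the supports agree. It is this common support that keeps the segment from the limit measure to $\sigma_i$ inside $(\bga_i)_{\pa,K_i}$ and makes the interpolation work.
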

\begin{proof} 
We first consider the case $I=0$. If $K_1,K_2>0$, the conclusion follows by the same barycentric homotopy argument as in \cite{battaglia_jevnikar_ruiz2015}. If exactly one of $K_1,K_2$ is zero, the argument reduces to the corresponding one-subcritical case treated in \cite{Malchiodi2007SomeER}. The same arguments apply in the present boundary setting, with the barycenter spaces and retractions used in Proposition~\ref{prop:inverse_map}. Hence, we assume $I>0$ below.

We now assume $I>0$. We first consider the case $K_1,K_2>0$.

\noindent\textit{Step 1. Limiting maps and reparametrization of simplex coordinate.}

For large $\Lambda$, we write
\[ \Psi\circ\Phi^\Lambda(\zeta)=\bigl(X_1^\Lambda(\zeta),\sigma_1^\Lambda(\zeta),X_2^\Lambda(\zeta),\sigma_2^\Lambda(\zeta);s^\Lambda(\zeta)\bigr)
\]
for $\zeta=\left(X_1, \sigma_1, X_2, \sigma_2;s\right)\in (\bga_1)_{\pa,K_1,I}\star (\bga_2)_{\pa,K_2,I}$ where
\[
s^\Lambda(\zeta)=(s_1^\Lambda(\zeta),\ldots,s_4^\Lambda(\zeta))\in\Delta^3.
\]

Based on the estimates established in the proof of Lemma~\ref{lemma:map}, for each $\zeta$ with $s_2>0$, the corresponding normalized measure $\frac{ V_1 e^{\Phi_1^{\Lambda} (\zeta)}}{ \int_{\Sigma}  V_1 e^{\Phi_1^{\Lambda}}(\zeta) dv_g} dv_g$ approaches the barycenter space $(\Sigma_\partial)_{K_1}$ as $\Lambda\to\infty$, and an analogous statement holds for the case $s_4>0$.
Hence, for all sufficiently large $\Lambda$, the retraction used in the definition of $\Psi$ is applicable, so $\sigma_1(\Phi^\Lambda(\zeta))$ and $\sigma_2(\Phi^\Lambda(\zeta))$ are well-defined when $s_2>0$ and $s_4>0$, respectively. 
The same estimates show that the limits
\[
\sigma_1^\infty(\zeta):=\lim_{\Lambda\to\infty} \sigma_1\bigl(\Phi^\Lambda(\zeta)\bigr), \qquad \sigma_2^\infty(\zeta):=\lim_{\Lambda\to\infty} \sigma_2\bigl(\Phi^\Lambda(\zeta)\bigr)
\]
exist for $s_2>0$ and $s_4>0$, respectively.
Moreover, these convergences are uniform on $\{s_2>\delta\}$ and $\{s_4>\delta\}$ for any fixed $\delta>0$.
It implies that $\sigma_i^\infty(\zeta)$, $i=1,2$ is continuous in $\{s_{2i}>0\}$.
We also have
\begin{equation}\label{eq:limitingsupp}
\text{supp}(\sigma_i^\infty(\zeta)) \subseteq \text{supp}(\sigma_i), \qquad i=1,2.
\end{equation}

We next consider the spherical components $X_i$. 
Denote 
$$A_i^\Lambda(\zeta):=(\langle v_i^\Lambda, \psi_1\rangle_{\oH},\cdots, \langle v_i^\Lambda, \psi_I\rangle_{\oH}) \in \mathbb{R}^I,$$
for $i=1,2$.
By the estimates used in the proof of Lemma~\ref{lemma:map}, there exists $C>0$, independent of $\Lambda$ and $\zeta$, such that $\|A_i^\Lambda(\zeta)\| \le C$.
Moreover, as $\Lambda \to \infty$, 
\[A_1^\Lambda(\zeta) \to A_1^\infty(\zeta) \quad \text{ for } s_2>0, \qquad A_2^\Lambda(\zeta)\to A_2^\infty(\zeta) \quad \text{ for } s_4>0,\]
and the limiting maps are continuous on each region.

Fix $\delta>0$. Since $\|A_i^\infty(\zeta)\|$ is uniformly bounded, there exists $\overline{\Lambda}_0 \ge \Lambda_0^*(L)$ such that, for every fixed $\Lambda_0 \ge \overline{\Lambda}_0$, the limiting maps
\[
X_1^\infty(\zeta):=\frac{A_1^\infty(\zeta)+\sqrt{\log^+(s_1\Lambda_0)}X_1}{|A_1^\infty(\zeta)+\sqrt{\log^+(s_1\Lambda_0)}X_1|}, \quad  \quad 
X_2^\infty(\zeta):=\frac{A_2^\infty(\zeta)+\sqrt{\log^+(s_3\Lambda_0)}X_2}{|A_2^\infty(\zeta)+\sqrt{\log^+(s_3\Lambda_0)}X_2|}\]
are well-defined on $\{s_1 \ge \delta, s_2>0\}$ and $\{s_3 \ge \delta, s_4>0\}$, respectively.
Moreover, $X_1^\Lambda \to X_1^\infty$ and $X_2^\Lambda \to X_2^\infty$ uniformly on $\{s_1 \ge \delta, s_2\ge \delta\}$ and $\{s_3 \ge \delta, s_4 \ge\delta\}$, respectively.
By increasing $\overline{\Lambda}_0$, if necessary, we may further assume that, for every fixed $\Lambda_0 \ge \overline{\Lambda}_0$,
\begin{equation}\label{eq:limitingmap}
\langle X_i^\Lambda(\zeta), X_i \rangle>0, \quad \langle X_i^\infty(\zeta), X_i \rangle>0, \ \  i=1,2
\end{equation}
whenever $s_{2i-1} \ge \delta, s_{2i}>0$.

\medskip

We now reparametrize the simplex coordinates. Fix $\delta=1/8$, and define
\[
q(t):=\left(t-\frac18\right)_+, \qquad \tau_j(s):=\frac{q(s_j)}{\sum_{\ell=1}^4q(s_\ell)}, \quad j=1,\ldots,4.
\]
Since $\max_j s_j\ge1/4$, the denominator is positive and $\tau(s):=(\tau_1(s),...,\tau_4(s))\in \Delta^3$ is well-defined. 
We also note that $\tau_j(s)>0$ implies $s_j>1/8$.
Moreover, $s_j=0$ implies $\tau_j(s)=0$, so $\zeta \mapsto (X_1, \sigma_1, X_2, \sigma_2;\tau(s))$ defines a continuous map from $(\bga_1)_{\pa,K_1,I}\star (\bga_2)_{\pa,K_2,I}$ to itself.

\medskip

Since the limiting maps $X_i^\infty$  are defined only for $s_2>0$ and $s_4>0$, respectively, and need not extend continuously to $s_2=0$ or $s_4=0$, we introduce one more cutoff function.
Let 
\[
\chi(t):=\begin{cases}
0 & 0 \le t \le \tfrac18,\\
8t-1 & \tfrac18<t<\tfrac14,\\
1 & \tfrac14 \le t\le 1,
\end{cases}
\]
and we define the modified spherical maps
\[
\hat{X}_1(\zeta):=\frac{(1-\chi(s_2))X_1^\Lambda(\zeta)+\chi(s_2)X_1^\infty(\zeta)}{|(1-\chi(s_2))X_1^\Lambda(\zeta)+\chi(s_2)X_1^\infty(\zeta)|} \quad \text{for } s_2>0; \quad \hat{X}_1(\zeta):=X_1^\Lambda(\zeta) \quad \text{for } s_2=0
\]
and $\hat{X}_2(\zeta)$ analogously. 
Indeed, by \eqref{eq:limitingmap}, the denominators above do not vanish. 
Moreover, since $\chi(t)=0$ for $t \le 1/8$, we have $\hat{X}_1=X_1^\Lambda$ for $s_2 \le 1/8$ and it is similar for $\hat{X}_2$. 
Hence, $\hat{X}_1$ and $\hat{X}_2$ are continuous on $\{s_1>0\}$ and $\{s_3>0\}$, respectively.

\medskip

\noindent \textit{Step 2. Construction of the homotopy}

Let us define
\[\begin{split}
F_1(\zeta) &:=(X_1^\Lambda,\sigma_1^\Lambda,X_2^\Lambda,\sigma_2^\Lambda;\tau(s)), \\
F_2(\zeta) &:=(\hat{X}_1,\sigma_1^\infty,\hat{X}_2,\sigma_2^\infty;\tau(s)), \\
F_3(\zeta) &:=(X_1, \sigma_1, X_2, \sigma_2;\tau(s))
\end{split}
\]
and we claim $\Psi \circ \Phi^\Lambda \simeq F_1 \simeq F_2 \simeq F_3 \simeq \id_{(\bga_1)_{\pa, K_1,I}\star (\bga_2)_{\pa, K_2,I}}$.

First, it is easy to check $F_3 \simeq \id_{(\bga_1)_{\pa, K_1,I}\star (\bga_2)_{\pa, K_2,I}}$ by deforming the simplex parameter linearly from $s$ to $\tau(s)$, i.e., by the homotopy
\[(\zeta, t) \mapsto (\zeta, t\tau(s)+(1-t)s).\]
Similarly, we have $\Psi \circ \Phi^\Lambda \simeq F_1$.

Next, we show $F_1 \simeq F_2$.
We define 
\[
H^1(\zeta,t):=(Y_1(\zeta,t),\sigma_1^{\frac{\Lambda}{1-t}}(\zeta),Y_2(\zeta,t),\sigma_2^{\frac{\Lambda}{1-t}}(\zeta);\tau(s))
\]
where 
\[
Y_1(\zeta,t)=\frac{(1-\chi(s_2))X_1^\Lambda(\zeta)+\chi(s_2)X_1^{\frac{\Lambda}{1-t}}(\zeta)}{|(1-\chi(s_2))X_1^\Lambda(\zeta)+\chi(s_2)X_1^{\frac{\Lambda}{1-t}}(\zeta)|}, \,  Y_2(\zeta,t)=\frac{(1-\chi(s_4))X_2^\Lambda(\zeta)+\chi(s_4)X_2^{\frac{\Lambda}{1-t}}(\zeta)}{|(1-\chi(s_4))X_2^\Lambda(\zeta)+\chi(s_4)X_2^{\frac{\Lambda}{1-t}}(\zeta)|}.
\]
for $\zeta \in (\bga_1)_{\pa, K_1,I}\star (\bga_2)_{\pa, K_2,I}$, $t\in [0,1)$.
Then it is trivial that $H^1(\zeta,0)=F_1$.
We also note that, for $i=1,2$, 
\[\tau_{2i}(s)>0 \Rightarrow s_{2i}>\tfrac18; \quad \tau_{2i-1}(s)>0, \chi(s_{2i})>0 \Rightarrow s_{2i-1}>\tfrac18, s_{2i}>\tfrac18.\]
Thus, the uniform convergence properties of the limiting maps imply that $H^1$ extends continuously to $t=1$, and we obtain $F_1 \simeq F_2$.

Finally, we show $F_2 \simeq F_3$.
We define
\[
H^2(\zeta,t):=\bigl(\frac{(1-t)\hat{X}_1+tX_1}{|(1-t)\hat{X}_1+tX_1|},(1-t)\sigma_1^\infty+t\sigma_1, \frac{(1-t)\hat{X}_2+tX_2}{|(1-t)\hat{X}_2+tX_2|},(1-t)\sigma_2^\infty+t\sigma_2;\tau(s)\bigr).
\]
By \eqref{eq:limitingsupp}, the measure in the above homotopy belongs to the same barycenter space. Moreover, by \eqref{eq:limitingmap} and the definition of $\hat{X}_i$, it holds that $\langle \hat{X}_i(\zeta), X_i \rangle>0$.
Hence, the denominators above do not vanish.
Thus, $H^2$ is a well-defined homotopy from $F_2$ to $F_3$, and this completes the proof.

\medskip

It remains to consider the cases where at least one of $K_1,K_2$ vanishes. Suppose first that $K_1=0<K_2$. Then $s_2=0$ and $v_1^\Lambda\equiv0$, so the first barycenter component is absent. Moreover, $X_1^\Lambda=X_1$ whenever the corresponding spherical component is relevant. 
Hence the preceding argument applies with all terms involving $\sigma_1$ omitted. The case $K_2=0<K_1$ is analogous.

Finally, if $K_1=K_2=0$, then $v_1^\Lambda=v_2^\Lambda=0$, so $X_i(\Phi^\Lambda(\zeta))=X_i$ whenever the corresponding simplex coefficient is positive. Moreover, for $\Lambda_0$ sufficiently large, the simplex coefficients are well-defined and $s_{2i-1}=0$ implies $\tilde{r}_i(\Phi^\Lambda(\zeta))=0$. Hence, a linear deformation of the simplex coordinates yields the homotopy $\Psi\circ \Phi^{\Lambda} \simeq \id$.
\end{proof}

\begin{remark}
\label{prop:algebra}
Since $\bga_i$ is homeomorphic to $\mathbb{S}$, 
we have $(\bga_i)_{\pa,K_i}$ is homeomorphic to $\mathbb{S}^{2K_i-1}$ if $K_i>0$. 
We also note that $\mathbb{S}^m \star \mathbb{S}^n \cong \mathbb{S}^{m+n+1}$; see, for instance, \cite[Chapter 0]{Hatcher2002}.
With our convention omitting empty factors in the join, we have $(\bga_1)_{\partial,K_1,I}\star (\bga_2)_{\partial,K_2,I} \cong \mathbb{S}^q$
\end{remark}

\section{Existence of solutions}\label{sec:4}
First, when $K_1=K_2=I=0$, there exists a minimizer of $J_{\beta,\rho}$. Next, we will use the min-max scheme to establish the existence of solutions when $K_1+K_2+I>0$. The proof mainly follows~\cite{battaglia_jevnikar_ruiz2015} (also see~\cite{ding1999existence,malchiodi2008morse}, for instance).

Let $K_1, K_2, I \in \mathbb{N}_0$ be determined by $2\pi K_i < \rho_i < 2\pi (K_i+1)$, $i=1,2$ and $-\lambda_{I+1} < \beta <-\lambda_I$.
We assume $K_1+K_2+I>0$, and set 
\[\mathcal{X}:=(\gamma_1)_{\partial,K_1,I}\star (\gamma_2)_{\partial,K_2,I}.\]

Proposition~\ref{prop:inverse_map} shows that the space $\mathcal{X}$ is dominated by the sublevel set $J^{-L}_{\beta,\rho}$. Since  $\mathcal{X}$ is non-contractible from Remark \ref{prop:algebra}, it follows that
$ \Phi^{\Lambda}(\mathcal{X})$ is non-contractible in $ J^{-L}_{\beta,\rho},$ for $\Lambda>0$  sufficiently large. 
We choose $\Lambda>0$ sufficiently large so that $\Phi^{\Lambda}(\mathcal{X})$ is contained in $J^{-2L}_{\beta,\rho}$. 
Subsequently, we define the topological cone 
\[ \cC:= \mathcal{X} \times[0,1]/\sim, \]
where $(x,0)\sim (x', 0)$ for any $x,x'\in \mathcal{X}.$
Since the join $\mathcal{X}=(\gamma_1)_{\pa, K_1,I} \star (\gamma_2)_{\pa, K_2,I}$ is homeomorphic to the sphere $\mathbb{S}^{2(K_1+K_2+I)-1}$, the cone $\mathcal{C}$ is homeomorphic to a ball of dimension $2(K_1+K_2+I)$.
The min-max value is defined as
$$
\bm{c}:= \inf_{\xi \in \cB} \max_{u \in \mathcal{C}} J_{\beta,\rho}(\xi(u)),
$$
where the admissible class $\cB$ is given by
$$
\cB := \left\{ \xi \in  C(\mathcal{C}, \cH) \mid \xi(\zeta,1) = \Phi^{\Lambda}(\zeta) \text{ for } \zeta \in \mathcal{X} \right\}.
$$
Since the map $(\zeta,t) \mapsto t\Phi^{\Lambda}(\zeta)$ belongs to $\cB$, the class $\cB$ is non-empty. Furthermore, we have
$$
\max_{(\zeta,1) \in \partial\mathcal{C}} J_{\beta,\rho}(\xi(\zeta,1))= \max_{ \zeta\in  \mathcal{X}  }J_{\beta,\rho}(\Phi^{\Lambda}(\zeta))\leq -2L
$$
for any $\xi$ in $\cB$.
Since $\partial\mathcal{C}$ is contractible in $\mathcal{C}$,  the boundary map $\xi|_{\partial\mathcal{C}}=\Phi^\Lambda$ is null-homotopic in $\xi(\mathcal{C})$ for any mapping $\xi$ in $\cB$. However, since $\Phi^{\Lambda}$ is not null-homotopic in the sublevel set $J^{-L}_{\beta,\rho}$, it follows that $\xi(\cC) \nsubseteq J^{-L}_{\beta,\rho}.$ Therefore, we deduce that the min-max value $\bm{c}$ satisfies $\bm{c}\geq -L$.

While a standard variational argument (see \cite{struwe2000variational}, for instance) yields that there exists a Palais-Smale sequence at the level $\bm{c}$ for the functional $J_{\beta,\rho}$,  we cannot ensure the existence of a critical point since the Palais-Smale condition's validity remains unknown. We therefore use the {\it monotonicity trick} from Struwe in \cite{struwe1985evolution} to get a sequence of solutions for $\rho_n\to\rho$. The existence of a solution then follows by the compactness result in  Theorem~\ref{thm:cpt_sols}.

\

\noindent {\bf  Acknowledgements.} 
Z.~Hu gratefully acknowledges Professors Thomas Bartsch, Mohameden Ahmedou, and Lei Zhang for their support and helpful discussions. Z.~Hu was partially supported by the National Natural Science Foundation of China and the China Postdoctoral Science Foundation (No.~12601383, \ No.  26Z020703201).
A. Jevnikar is partially supported by INdAM-GNAMPA project \emph{Analisi qualitativa di problemi differenziali non lineari}.
S.-H. Moon was  supported   by the National Research Foundation of Korea(NRF) grant funded by the Korea government(MSIT) (No. RS-2022-NR072398, \ No. RS-2022-NR070702).

\medskip

\noindent\textbf{Competing interests.}
The authors declares no competing interests.

\medskip
\noindent\textbf{Data availability.}
No datasets were generated or analysed during the current study.


\begin{thebibliography}{10}
\bibitem{ABH2026}
Mohameden Ahmedou, Thomas Bartsch, and Zhengni Hu.
\newblock Blow-up solutions for the steady state of the Keller-Segel system on Riemann surfaces.
\newblock {\em Calc. Var. Partial Differ. Equ.}, 65(5):149, 2026.








\bibitem{battaglia_existence_2015}
Luca Battaglia.
\newblock Existence and multiplicity result for the singular {Toda} system.
\newblock {\em J. Math. Anal. Appl.}, 424(1):49--85, 2015.


\bibitem{Battaglia2018}
Luca Battaglia.
\newblock A general existence result for stationary solutions to the
{Keller}-{Segel} system.
\newblock {\em Discrete Contin. Dyn. Syst.}, 39(2):905--926, 2019.


\bibitem{battaglia_jevnikar_ruiz2015}
Luca Battaglia, Aleks Jevnikar, Andrea Malchiodi, and David Ruiz.
\newblock A general existence result for the {T}oda system on compact surfaces.
\newblock {\em Adv. Math.}, 285:937--979, 2015.


\bibitem{battaglia2013moser}
Luca Battaglia and Andrea Malchiodi.
\newblock A {M}oser-{T}rudinger inequality for the singular {T}oda system.
\newblock {\em Bull. Inst. Math. Acad. Sin.}, 9(1):1{\textendash}23, 2014.


\bibitem{Battaglia2015a_note}
Luca Battaglia and Gabriele Mancini.
\newblock A note on compactness properties of the singular {T}oda system.
\newblock {\em Atti Accad. Naz. Lincei Cl. Sci. Fis. Mat. Natur.},
26(Rendiconti Lincei. Mathematics and Applications;vol. 26, Issue
3):299--307, 2015.


\bibitem{bolton1988conformal}
John Bolton, Gary~R. Jensen, Marco Rigoli, and Lyndon~M. Woodward.
\newblock On conformal minimal immersions of {$S^2$} into $
\mathbb{C}\mathbb{P}^n $.
\newblock {\em Math. Ann.}, 279(4):599--620, 1988.


\bibitem{brezis_uniform_1991}
Ha\"{i}m Brezis and Frank Merle.
\newblock Uniform estimates and blow-up behavior for solutions of { $ -{\Delta}
u={V(x)} e^u$} in two dimensions.
\newblock {\em Comm. Partial Differential Equations}, 16(8-9):1223--1253, 1991.


\bibitem{car-mal}
Alessandro Carlotto and Andrea Malchiodi.
\newblock Weighted barycentric sets and singular Liouville equations on compact
surfaces.
\newblock {\em J. Funct. Anal.}, 262(2):409--450, 2012.


\bibitem{chern1955}
Shiing-shen Chern.
\newblock An elementary proof of the existence of isothermal parameters on a
surface.
\newblock {\em Proc. Amer. Math. Soc.}, 6:771--782, 1955.


\bibitem{DAprile_Pistoia_Ruiz2015}
Teresa D'Aprile, Angela Pistoia, and David Ruiz.
\newblock A continuum of solutions for the {S}{U}(3) {T}oda system exhibiting
partial blow-up.
\newblock {\em Proc. Lond. Math. Soc.}, 111(4):797--830, 2015.


\bibitem{D'aprile_asymmetric_2016}
Teresa D'Aprile, Angela Pistoia, and David Ruiz.
\newblock Asymmetric blow-up for the {S}{U}(3) {T}oda system.
\newblock {\em J. Funct. Anal.}, 271(3):495--531, 2016.




\bibitem{ding1999existence}
Weiyue Ding, J{\"u}rgen Jost, Jiayu Li, and Guofang Wang.
\newblock Existence results for mean field equations.
\newblock {\em Ann. Inst. H. Poincar{\'e} Anal. Non Lin{\'e}aire},
16(5):653--666, 1999.










\bibitem{DjadliMalchiodi2008}
Z.~Djadli and A.~Malchiodi.
\newblock Existence of conformal metrics with constant {Q}-curvature.
\newblock {\em Annals of Mathematics (2)}, 168(3):813--858, 2008.


\bibitem{djadli_existence_2008}
Zindine Djadli.
\newblock Existence result for the mean field problem on {R}iemann surfaces of
all genuses.
\newblock {\em Commun. Contemp. Math.}, 10(02):205--220, 2008.


\bibitem{doliwa1997holomorphic}
Adam Doliwa.
\newblock Holomorphic curves and toda systems.
\newblock {\em Lett. Math. Phys.}, 39(1):21--32, 1997.


\bibitem{dunne1995self}
Gerald~V. Dunne.
\newblock {\em Self-dual {C}hern-{S}imons theories}.
\newblock Lecture Notes in Physics. Springer, Berlin, 1995.


\bibitem{dunne1991self}
Gerald~V. Dunne, R.~Jackiw, So-Young Pi, and Carlo~A. Trugenberger.
\newblock Self-dual {Chern}-{Simons} solitons and two-dimensional nonlinear
equations.
\newblock {\em Phys. Rev. D}, 43:1332--1345, 1991.


\bibitem{Esposito2014singular}
Pierpaolo Esposito and Pablo Figueroa.
\newblock Singular mean field equations on compact {R}iemann surfaces.
\newblock {\em Nonlinear Anal.}, 111:33--65, 2014.


\bibitem{GuZhang2020}
Yi~Gu and Lei Zhang.
\newblock Degree counting theorems for singular liouville systems.
\newblock {\em Ann. Sc. Norm. Super. Pisa Cl. Sci. (5)}, 21(Special Issue
1–2):1103--1135, 2020.


\bibitem{Hatcher2002}
Allen Hatcher.
\newblock {\em Algebraic Topology}.
\newblock Cambridge University Press, Cambridge, 2002.




\bibitem{jevnikar_kallel_Malchiodi2015}
Aleks Jevnikar, Sadok Kallel, and Andrea Malchiodi.
\newblock A topological join construction and the {T}oda system on compact
surfaces of arbitrary genus.
\newblock {\em Anal. PDE}, 8(8):1963--2027, 2015.


\bibitem{jost_lin_wang2006}
Jürgen Jost, Changshou Lin, and Guofang Wang.
\newblock Analytic aspects of the {{Toda}} system: {I}{I.} bubbling behavior
and existence of solutions.
\newblock {\em Comm. Pure Appl. Math.}, 58(4):526--558, 2005.


\bibitem{jostandwang2001}
Jürgen Jost and Guofang Wang.
\newblock Analytic aspects of the {{{Toda}}} system: {I.} { A}
{M}oser-{T}rudinger inequality.
\newblock {\em Comm. Pure Appl. Math.}, 54(11):1289--1319, 2001.


\bibitem{lee_degree_2018}
Youngae Lee, Chang-Shou Lin, Juncheng Wei, and Wen Yang.
\newblock Degree counting and {Shadow} system for {Toda} system of rank two:
{One} bubbling.
\newblock {\em J. Differential Equations}, 264(7):4343--4401, 2018.


\bibitem{Lee2020}
Youngae Lee, Chang-Shou Lin, Wen Yang, and Lei Zhang.
\newblock Degree counting for toda system with simple singularity: One point
blow up.
\newblock {\em J. Differential Equations}, 268(5):2163--2209, 2020.


\bibitem{li2005solutions}
Jiayu Li and Yuxiang Li.
\newblock Solutions for {T}oda systems on {R}iemann surfaces.
\newblock {\em Ann. Sc. Norm. Super. Pisa Cl. Sci.}, 4:703--728, 2005.




\bibitem{LinWeiZhang2015}
Chang-Shou Lin, Jun-Cheng Wei, and Lei Zhang.
\newblock Classification of blowup limits for {SU(3)} singular Toda systems.
\newblock {\em Anal. PDE}, 8(4):807--837, 2015.


\bibitem{lin2012classification}
Chang-Shou Lin, Juncheng Wei, and Dong Ye.
\newblock Classification and nondegeneracy of {S}{U}({N}+1) {T}oda system with
singular sources.
\newblock {\em Invent. math.}, 190:169--207, 2012.




\bibitem{Lucia2002Vortex}
Marcello Lucia and Margherita Nolasco.
\newblock {S}{U}(3) {C}hern--{S}imons vortex theory and {T}oda systems.
\newblock {\em J. Differ. Equ.}, 184(2):443--474, 2002.


\bibitem{malchiodi2008morse}
Andrea Malchiodi.
\newblock Morse theory and a scalar field equation on compact surfaces.
\newblock {\em Adv. Differential Equations}, 13(11-12):1109--1129, 2008.


\bibitem{Malchiodi2007SomeER}
Andrea Malchiodi and Cheikh~Birahim Ndiaye.
\newblock Some existence results for the {T}oda system on closed surfaces.
\newblock {\em Rend. Lincei Mat. Appl.}, 18(4):391--412, 2007.




\bibitem{MalchiodiRuiz2013}
Andrea Malchiodi and David Ruiz.
\newblock A variational analysis of the toda system on compact surfaces.
\newblock {\em Comm. Pure Appl. Math.}, 66(3):332--371, 2013.


\bibitem{musso_new_2016}
Monica Musso, Angela Pistoia, and Juncheng Wei.
\newblock New blow-up phenomena for {S}{U}({N}+ 1) {Toda} system.
\newblock {\em J. Differential Equations}, 260(7):6232--6266, 2016.


\bibitem{Ndiaye2008}
Cheikh~Birahim Ndiaye.
\newblock Conformal metrics with constant $q$-curvature for manifolds with
boundary.
\newblock {\em Comm. Anal. Geom.}, 16(5):1049--1124, 2008.


\bibitem{nolasco1999double}
Margherita Nolasco and Gabriella Tarantello.
\newblock Double vortex condensates in the {C}hern-{S}imons theory.
\newblock {\em Calc. Var. Partial Differ. Equ.}, 9:31--94, 1999.


\bibitem{nolasco2000vortex}
Margherita Nolasco and Gabriella Tarantello.
\newblock Vortex condensates for the {S}{U}(3) {C}hern-{S}imons theory.
\newblock {\em Comm. Math. Phys.}, 213(3):599--639, 2000.


\bibitem{struwe1985evolution}
Michael Struwe.
\newblock On the evolution of harmonic mappings of {R}iemannian surfaces.
\newblock {\em Comment. Math. Helv.}, 60(1):558--581, 1985.


\bibitem{struwe2000variational}
Michael Struwe.
\newblock {\em Variational Methods: Applications to Nonlinear Partial
Differential Equations and {H}amiltonian Systems}.
\newblock Springer-Verlag, Berlin, 3rd edition, 2000.


\bibitem{wang_wei2002_steady}
Guofang Wang and Juncheng Wei.
\newblock Steady state solutions of a reaction-diffusion system modeling
{C}hemotaxis.
\newblock {\em Math. Nachr.}, 233-234(1):221--236, 2002.


\bibitem{yang1999relativistic}
Yisong Yang.
\newblock The relativistic non-abelian {C}hern-{S}imons equation.
\newblock {\em Comm. Math. Phys.}, 186(1):199--218, 1999.


\bibitem{yang2001solitons}
Yisong Yang.
\newblock {\em Solitons in Field Theory and Nonlinear Analysis}.
\newblock Springer Monographs in Mathematics. Springer, New York, 2001.


\bibitem{yang2006extremal}
Yunyan Yang.
\newblock Extremal functions for {M}oser-{T}rudinger inequalities on
2-dimensional compact {R}iemannian manifolds with boundary.
\newblock {\em Int. J. Math.}, 17(3):313--330, 2006.


\bibitem{yang_wang2021}
Yunyan Yang and Jie Zhou.
\newblock Blow-up analysis involving isothermal coordinates on the boundary of
compact {Riemann} surfaces.
\newblock {\em J. Math. Anal. Appl.}, 504(2):125440, 2021.


\bibitem{zhu_solutions_2011}
Xiao~Bao Zhu.
\newblock Solutions for {Toda} system on {Riemann} surfaces with boundary.
\newblock {\em Acta Math. Sin.-English Ser.}, 27(8):1501--1520, 2011.


\end{thebibliography}


\end{document}